\documentclass[11pt]{amsart}

\usepackage[margin=1in]{geometry}
\usepackage{amsmath,amssymb,amsthm}
\usepackage{mathtools}
\usepackage{enumitem}
\usepackage{xcolor}
\usepackage{tabularx}
\usepackage{hyperref}
\hypersetup{colorlinks=true,linkcolor=blue!50!black,citecolor=blue!50!black,urlcolor=blue!50!black}
\usepackage[capitalise]{cleveref}

\theoremstyle{plain}
\newtheorem{theorem}{Theorem}[section]
\newtheorem{proposition}[theorem]{Proposition}
\newtheorem{lemma}[theorem]{Lemma}
\newtheorem{corollary}[theorem]{Corollary}

\theoremstyle{definition}
\newtheorem{definition}[theorem]{Definition}
\newtheorem{example}[theorem]{Example}

\theoremstyle{remark}
\newtheorem{remark}[theorem]{Remark}

\DeclareMathOperator{\End}{End}
\DeclareMathOperator{\GL}{GL}
\DeclareMathOperator{\spec}{spec}
\DeclareMathOperator{\rank}{rank}
\DeclareMathOperator{\diag}{diag}
\DeclareMathOperator{\Hess}{Hess}
\DeclareMathOperator{\Crit}{Crit}
\DeclareMathOperator{\Focal}{Focal}
\DeclareMathOperator{\codim}{codim}
\DeclareMathOperator{\Trans}{Trans}
\DeclareMathOperator{\Stab}{Stab}
\DeclareMathOperator{\Ad}{Ad}
\DeclareMathOperator*{\argmin}{arg\,min}
\newcommand{\R}{\mathbb{R}}

\newcommand{\Z}{\mathbb{Z}}
\newcommand{\mM}{\mathcal{M}}
\newcommand{\mT}{\mathcal{T}}

\title[Monarch Matrix Approximation]{The Geometry of Monarch Matrix Approximation: Relative Position, Gauge Kernels, and Bruhat Combinatorics}

\author{Shintaro Yoshizawa}
\address{R-Frontier Div., Frontier Research Center, Toyota Motor Corporation}
\email{shintaro\_yoshizawa@mail.toyota.co.jp}
\date{\today}

\begin{document}

\begin{abstract}
Helmke and Shayman \cite{HelmkeShayman1995} showed that the least-squares distance to the manifold of rank-$r$ matrices is, for generic targets, a nondegenerate Morse function whose index is a Bruhat-cell codimension on a Grassmannian. We develop the analogous theory for \emph{Monarch matrices} $M=PLPR$ \cite{DaoMonarch2022}, with $P$ a fixed permutation and $L,R$ block-diagonal for a partition $\lambda=(n_1,\dots,n_r)$ of $n$. The differential $D\Phi$ of the parametrization $\Phi(L,R)=PLPR$ has kernel naturally isomorphic to the relative-position algebra $B\cap PBP^\top\cong\bigoplus_{i,j}\mathrm{Mat}_{a_{ij}}(\R)$, where $a=(a_{ij})$ is the transfer matrix recording the classical double-coset invariant of $P$ relative to the parabolic $P_\lambda$ (\Cref{lem:kernel-end}); consequently $\dim\ker D\Phi=\sum_{i,j}a_{ij}^2$ exactly. The same transfer matrix determines, by a separate and more delicate computation, the dimension of the associated Bruhat-type cell in $\GL_n(\R)/P_\lambda$ -- explicitly, $\codim(\mathrm{cell}_a)=\sum_{i<i',\,j<j'}a_{ij}a_{i'j'}$ (\Cref{thm:codim}) -- so the gauge dimension and the Bruhat-cell codimension, while both computable from $a$, are in general \emph{different} functions of it, and, via a generalized transportation polytope, a closure order on all such cells (\Cref{sec:transfer,sec:double-coset,sec:birkhoff}). We determine the permutations $P$ minimizing $\dim\ker D\Phi$ (equivalently maximizing $\dim\mM_P$) exactly in the classical equal-block case of \cite{DaoMonarch2022} (the \emph{Monarch--Schur theorem}, \Cref{thm:monarch-schur}: the minimum is $m^2$ for blocks of size $m$, attained exactly by transfer matrices with $a_{ij}\equiv1$ -- a condition satisfied by many permutations, not only single cycles), give a general lower bound for arbitrary partitions, and build the resulting second-order theory: an explicit Hessian $C(A)=S+T(A)$ built from $B\cap PBP^\top$, Sard-generic nondegeneracy, a Morse-index formula, and a focal set of Lebesgue measure zero, described explicitly as a union of hyperplanes in the normal slice at special cyclically symmetric points when $P$ carries extra cyclic symmetry.

Finally we study the associated gradient flow. The classical coercivity-plus-{\L}ojasiewicz convergence template, used for principal-component flows by Yoshizawa, Helmke and Starkov \cite{YoshizawaHelmkeStarkov2001} and by Yoshizawa \cite{Yoshizawa2023}, fails here: $\Phi(L,R)=PLPR$ admits explicit unbounded flat directions (a failure of coercivity possibly, but not conclusively, traceable to a border-rank-type non-closedness of the Monarch image). A quadratic Tikhonov penalty already restores coercivity; we instead use a quartic penalty, matching the total degree of $\|\Phi(L,R)\|_F^2$, which yields unconditional global convergence to a single critical point and, as the penalty vanishes, recovers a quartic-norm-minimizing exact solution when one exists or else a well-defined best-approximation image. This regularization leaves the Monarch architecture untouched while canonically balancing the residual gauge freedom, discouraging large single-block norms, and removing the flat Hessian direction associated with the residual scalar gauge symmetry -- giving a geometric account of weight decay on structured factorized layers.
\end{abstract}

\maketitle
\tableofcontents

\section{Introduction}

\subsection{The Helmke--Shayman theory}
\label{sec:intro-HS}
Let $A \in \R^{n\times n}$ and let
\[
\mM_r = \{X \in \R^{n\times n} : \rank(X) = r\}
\]
be the manifold of rank-$r$ matrices. Helmke and Shayman \cite{HelmkeShayman1995} studied the least-squares distance function $f_A(X) = \|A-X\|_F^2$ on $\mM_r$ and proved that $\mM_r$ is a smooth manifold of dimension $r(2n-r)$, with tangent space $T_X\mM_r=\{A_1X+XA_2\}$ at $X$; that critical points of $f_A$ satisfy a Riccati-type equation; and that for a generic $A$ (distinct singular values), $f_A$ is a nondegenerate Morse function whose unique minimum is the Eckart--Young truncated SVD of $A$, whose critical points correspond to $r$-element subsets of the singular directions of $A$, and whose Morse index at each critical point is computed \emph{exactly} by a Schubert/Bruhat-cell codimension count on the Grassmannian $\mathrm{Gr}(r,n)$. This is one of the cleanest instances of a fully computable Morse theory on a naturally occurring singular variety.

\subsection{Monarch matrices}
\emph{Monarch matrices} were introduced in \cite{DaoMonarch2022} as a hardware-efficient structured replacement for dense linear layers: fixing $n=m^2$ and a permutation matrix $P$, one considers
\[
M = PLPR, \qquad L=\diag(L_1,\dots,L_m),\quad R=\diag(R_1,\dots,R_m),\quad L_i,R_i\in\R^{m\times m}.
\]
It is natural to ask whether the Morse-theoretic picture Helmke and Shayman built for fixed-rank matrices has an analogue for this structured class, i.e.\ to understand the geometry of $\min_{L,R}\|A-PLPR\|_F^2$. This is the question we take up here, working throughout with the natural generalization to an arbitrary block partition $\lambda=(n_1,\dots,n_r)$ of $n$ (\Cref{def:general-monarch}), since -- as the results below show -- the equal-block case $n_i\equiv m$ of \cite{DaoMonarch2022} obscures arithmetic structure that is visible only once general partitions are allowed.

\subsection{The relative-position algebra}
\label{sec:intro-two-invariants}
Once a partition $\lambda$ and a block algebra $B=\bigoplus_i \mathrm{Mat}_{n_i}(\R)$ are fixed, a permutation matrix $P$ determines two natural algebras built from $B$ and $P$:
\[
\begin{gathered}
B \cap C_{\mathrm{Mat}_n(\R)}(P) \qquad\text{(the centralizer of $P$ in $B$)}, \\[4pt]
B \cap PBP^\top \qquad\text{(the relative position of $B$ and $P B P^\top$)}.
\end{gathered}
\]
These coincide only in the trivial case $P=\mathrm{id}$ (\Cref{thm:equality} below), and $B\cap C(P)\subsetneq B\cap PBP^\top$ strictly otherwise. The relative-position algebra $B\cap PBP^\top$ turns out to be the single object governing every aspect of the geometry studied in this paper:

\begin{itemize}
\item It computes exactly the kernel of the differential $D\Phi$ of the Monarch parametrization $\Phi(L,R)=PLPR$ (\Cref{lem:kernel-end}) -- the gauge directions along which $(L,R)$ can move without changing the product $PLPR$ -- and hence controls $\dim\mM_P$ (\Cref{sec:quiver}), the Hessian and its degeneracy locus (\Cref{sec:hessian,sec:sard,sec:focal}).
\item It simultaneously records the combinatorial type of $P$ as an element of the double coset space $P_\lambda\backslash\GL_n(\R)/P_\lambda$ of the parabolic subgroup $P_\lambda$ stabilizing the partial flag of type $\lambda$ (\Cref{sec:double-coset}), and is computed explicitly by a transfer matrix $a=(a_{ij})$ recording how many indices of each block $P$ sends into each other block, via $\dim(B\cap PBP^\top)=\sum_{i,j}a_{ij}^2$ (\Cref{prop:levi-type}).
\end{itemize}

The centralizer $B\cap C(P)$ remains a natural companion algebra -- isomorphic to the endomorphism algebra $\End_{Q(P)}(V)$ of a quiver representation built from $P$ and $\lambda$ (\Cref{rem:end-still-fine}) -- but it does not enter the differential-geometric quantities studied here, and we record its basic properties in \Cref{rem:end-still-fine} for completeness without otherwise using it.

Because a single algebra governs both the differential and the combinatorial type of $P$, the comparison with \Cref{thm:HS} is direct: in the classical fixed-rank setting, the Morse index of a critical point coincides exactly with the codimension of a Bruhat/Schubert cell, with no further correction, because the classical setting compares the block partition $\lambda$ against the \emph{finest possible} partition $\mu=(1^n)$ (the complete flag of singular directions of $A$); in that special case the relative-position cell is a single point modulo the torus, so its codimension already determines the Morse index. The Monarch parametrization instead compares $\lambda$ against itself via $P$, so the relative-position cell $\mathrm{cell}_a$ is typically positive-dimensional, and pinning down a Morse index genuinely requires the second-order data of \Cref{sec:hessian} (the spectrum of $C(A)$) in addition to the combinatorial type $a$ -- both computed from the same algebra $B\cap PBP^\top$ (\Cref{sec:HS-comparison}).

\subsection{Summary of results}
Our results are organized as follows.

\emph{The relative-position algebra and the differential} (\Cref{sec:monarch-def,sec:first-order,sec:quiver,sec:transfer,sec:double-coset}):
\begin{enumerate}[label=(\roman*)]
\item (\Cref{lem:kernel-end}) $\ker D\Phi_{(L,R)} \cong B\cap PBP^\top$, naturally, via $(\delta L,\delta R)\mapsto L^{-1}\delta L$. (\Cref{rem:end-still-fine} records the companion fact $B\cap C(P)\cong\End_{Q(P)}(V)$ for a quiver $Q(P)$ built from $P$ and $\lambda$.)
\item (\Cref{prop:levi-type}) $B\cap PBP^\top \cong \bigoplus_{i,j}\mathrm{Mat}_{a_{ij}}(\R)$ for the transfer matrix $a=a(P,\lambda)$, so $\dim\ker D\Phi=\sum_{i,j}a_{ij}^2$; this identifies $\ker D\Phi$ with the relative-position invariant of the double coset $P_\lambda\,P\,P_\lambda$. The same double coset determines a Bruhat-type cell $\mathrm{cell}_a\subset\GL_n(\R)/P_\lambda$ of codimension $\sum_{i<i',j<j'}a_{ij}a_{i'j'}$ (\Cref{thm:codim}) -- a genuinely different function of $a$ from $\dim\ker D\Phi$, computed from the same combinatorial datum but answering a different geometric question.
\item (\Cref{sec:HS-comparison}) Specializing to $\mu=(1^n)$ recovers the classical inversion-number formula underlying \Cref{thm:HS}, explaining precisely why the Morse-index/Bruhat-codimension coincidence of Helmke--Shayman is special to comparison against a complete flag.
\end{enumerate}

\emph{The Monarch--Schur theorem and second-order theory} (\Cref{sec:schur,sec:hessian,sec:sard,sec:focal}):
\begin{enumerate}[resume,label=(\roman*)]
\item (\Cref{prop:cs-bound}) For every partition $\lambda$ and every $P$, $\dim\ker D\Phi=\sum_{i,j}a_{ij}^2\ge\frac1r\sum_in_i^2$, by Cauchy--Schwarz.
\item (\Cref{thm:monarch-schur}, the \emph{Monarch--Schur theorem}) For the classical equal-block case $\lambda=(m,\dots,m)$ ($r$ blocks, $n=rm$) of \cite{DaoMonarch2022}, $\min_P\dim\ker D\Phi = r\big[s(q+1)^2+(r-s)q^2\big]$ where $m=qr+s$, $0\le s<r$; for $r=m$ (i.e.\ $n=m^2$) this is $m^2$, attained exactly by $P$ with $a_{ij}\equiv1$. The minimizing permutations need not be single cycles (\Cref{ex:noncyclic-optimal}). We call a $\lambda$-minimizing $P$ a \emph{Monarch permutation}.
\item (\Cref{prop:sylvester,thm:nondeg}) The Hessian of $f_A$ at a critical point is an explicit symmetric operator $C(A)=S+T(A)$ with $\ker S=\ker D\Phi\cong B\cap PBP^\top$; for a Monarch permutation $P$, $f_A$ is a nondegenerate Morse function on $\mM_P$ for a Sard-generic (full-measure) set of targets $A$, with Morse index equal to the number of negative eigenvalues of $C(A)$ restricted to a complement of $\ker D\Phi$ (\Cref{prop:sylvester}).
\item (\Cref{thm:focal}) The focal set of $\mM_P$ has Lebesgue measure zero (via Sard's theorem, as the critical values of the normal exponential map); its intersection with the normal slice at the special cyclically symmetric points of $\mM_P$ (where $L,R$ commute with $P$) is described explicitly as a union of hyperplanes when $P$ carries an extra cyclic symmetry compatible with $\lambda$ (\Cref{ex:cyclic-focal}).
\end{enumerate}

\emph{Double-coset combinatorics and the Birkhoff polytope} (\Cref{sec:equality,sec:birkhoff,sec:hierarchy}):
\begin{enumerate}[resume,label=(\roman*)]
\item (\Cref{thm:equality}) $B\cap C(P) = B\cap PBP^\top$ if and only if $P=\mathrm{id}$: the centralizer is a strictly smaller algebra away from the identity.
\item (\Cref{thm:dominance}) $\Trans(\lambda,\lambda)$, the transfer matrices with marginals $\lambda$, are exactly the lattice points of a generalized transportation polytope (\Cref{sec:birkhoff}); independently of this polytope structure, the dominance order $\preceq$ on $\Trans(\lambda,\lambda)$ is isomorphic to the parabolic Bruhat order on $P_\lambda\backslash\GL_n(\R)/P_\lambda$ -- two separate structures on the same underlying set, which we do not claim coincide (e.g.\ we do not identify the Bruhat order with the polytope's face lattice).
\item (\Cref{cor:hierarchy}) A two-tier hierarchy -- transportation-polytope membership $\supsetneq$ Monarch permutation (minimality of $\sum_{i,j}a_{ij}^2$) -- governs exactly when $\mM_P$ attains its maximal dimension $2\sum_in_i^2-\min_P\dim\ker D\Phi$.
\end{enumerate}

\emph{Global convergence and Tikhonov regularization} (\Cref{sec:global-obstruction,sec:tikhonov,sec:network-implications}):
\begin{enumerate}[resume,label=(\roman*)]
\item (\Cref{prop:flat,prop:flat-general}) The classical coercivity-plus-{\L}ojasiewicz template for global gradient-flow convergence, used for principal-component-type flows in \cite{YoshizawaHelmkeStarkov2001,Yoshizawa2023}, fails for the (unregularized) Monarch flow: explicit unbounded flat directions exist whenever the transfer matrix $a(P,\lambda)$ has a zero entry (in particular, for every Monarch permutation with $m<r$, though not, by this argument, for the complete-transfer-graph minimizers of the square-block case $r=m$; see the remark following \Cref{prop:flat-general}), and possibly, though not conclusively, related to a border-rank-type non-closedness of the Monarch image (\Cref{sec:border}).
\item (\Cref{prop:coercive,thm:tikhonov-converge}) A quartic Tikhonov penalty $\mu(\|L\|_F^4+\|R\|_F^4)$ restores coercivity uniformly for every $\mu>0$, and the resulting regularized gradient flow converges, from every initial condition, to a single critical point.
\item (\Cref{thm:mu-limit}) As $\mu\to0^+$, the regularized minimizers converge (in cost, and along a subsequence in image) to a best Monarch approximation of $A$; when this best approximation is attained by a finite factorization, the regularization path itself remains bounded, and every subsequential limit is a quartic-norm-minimizing finite representation among those attaining that particular best-approximation value.
\item (\Cref{prop:balance-min,prop:gauge-nondegenerate}) The regularization canonically resolves the residual scalar gauge freedom of \Cref{thm:monarch-schur} via a balancing condition $\|L\|_F=\|R\|_F$, and strictly removes the corresponding flat direction of the unregularized Hessian, without altering the underlying Monarch computational architecture.
\end{enumerate}

\section{Preliminaries: the Helmke--Shayman theorem}
\label{sec:HS}

We record the classical statement for comparison, following \cite{HelmkeShayman1995}; see also \cite{Absil2008} for the modern differential-geometric treatment of related optimization problems on manifolds of fixed rank, and \cite{HelmkeMoore1994} for the broader circle of ideas relating matrix least-squares problems to dynamical systems on Lie groups and homogeneous spaces.

\begin{theorem}[Helmke--Shayman]
\label{thm:HS}
Let $\mM_r \subset \R^{n\times n}$ be the manifold of rank-$r$ matrices, $0 < r < n$, and let $A \in \R^{n \times n}$ have $r$ distinct nonzero singular values, all distinct from the remaining singular values of $A$. Then $f_A(X) = \|A-X\|_F^2$ restricted to $\mM_r$ is a nondegenerate Morse function. Its critical points are exactly the matrices $X = U_r\Sigma_r V_r^\top$ obtained by choosing $r$ of the singular triples of $A$; the global minimum is the Eckart--Young truncation, and the Morse index of each critical point is computed combinatorially from the relative position of the chosen singular subspaces, matching the dimension of a Schubert cell in the Grassmannian $\mathrm{Gr}(r,n)$.
\end{theorem}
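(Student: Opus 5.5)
We prove \Cref{thm:HS} in four steps: tangent space and critical points, identification of the critical set, the Hessian, and the index count.

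\emph{Step 1: tangent space and critical points.} Write $X=U_1\Sigma V_1^\top$ with $\Sigma$ invertible $r\times r$. We show $\mM_r$ is a smooth manifold by exhibiting it as the orbit of $\GL_n\times\GL_n$ acting by $X\mapsto gXh^{-1}$. This gives $\dim\mM_r=r(2n-r)$ and the tangent space $T_X\mM_r=\{A_1X+XA_2\}$. Its normal space consists of the matrices $N$ with $N^\top X=0$ and $XN^\top=0$. Hence $X$ is critical for $f_A$ exactly when $A-X$ is normal, that is,
\[
(A-X)^\top X=0,\qquad X(A-X)^\top=0 .
\]
These give $A^\top X=X^\top X$ and $XA^\top=XX^\top$. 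From them, $AX^\top X=A A^\top X$-type relations show that the column space of $X$ is invariant under $AA^\top$ and its row space is invariant under $A^\top A$. Moreover $A$ maps $\mathrm{row}(X)$ onto $\mathrm{col}(X)$, and $X$ agrees with $A$ on $\mathrm{row}(X)$.

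\emph{Step 2: the critical set.} Assume the relevant singular values are distinct as in the hypothesis. Invariant $r$-dimensional subspaces are then spanned by $r$ singular vectors. We must also check that $X\ne0$ forces the chosen singular values to be nonzero. Together these show the critical points are exactly $X_I=\sum_{i\in I}\sigma_iu_iv_i^\top$ for $|I|=r$. The value is $f_A(X_I)=\sum_{i\notin I}\sigma_i^2$, so the minimum is the Eckart--Young truncation.

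\emph{Step 3: the Hessian.} We parametrize a neighbourhood of $X_I$ in adapted singular bases. Write
\[
X=\begin{pmatrix}\Sigma_I+E & F\\ G & G(\Sigma_I+E)^{-1}F\end{pmatrix}
\]
in the graph chart, with blocks indexed by $I$ and its complement $J$. Expanding $f_A$ to second order then gives three groups of terms. The $E$-block contributes $\|E\|^2$, which is positive definite. The entries of $F$ and $G$ couple in pairs $(i,j)\in I\times J$. Each pair gives a $2\times2$ quadratic form, built from the $(i,j)$ entries of $F$ and $G$ together with the cross term $-2\sigma_j\,f_{ij}g_{ji}/\sigma_i$. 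Its eigenvalues are positive or negative according as $\sigma_i>\sigma_j$ or $\sigma_i<\sigma_j$. Under the genericity hypothesis none of them vanish, so every critical point is nondegenerate.

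\emph{Step 4: the index.} Step 3 gives an index of the form $\#\{(i,j)\in I\times J:\sigma_i<\sigma_j\}$, possibly with a multiplicity from the two-dimensional coupled blocks. This is the inversion count of the subset $I$ in the ordering of singular values. That is precisely the dimension, or codimension, of the Schubert cell in $\mathrm{Gr}(r,n)$ indexed by $I$.

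The main obstacle is the $2\times2$ block computation in Step 3. It requires care with the curvature term that comes from the nonlinear corner $G\Sigma^{-1}F$. That term is what makes the sign depend on $\sigma_j/\sigma_i$. It also fixes the multiplicity, which is $1$ or $2$ per pair depending on whether $A$ is symmetric, and this must be matched against the correct cell-dimension convention.
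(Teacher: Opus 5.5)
The paper gives no proof of \Cref{thm:HS}: it quotes the classical result and only records, in \Cref{sec:HS-comparison}, that the index is the inversion number $\ell(S)$. Your route (adapted singular bases, a graph chart at $X_I$, block-diagonalizing the second-order expansion) is the standard one and is sound. The genuine error is in the step you yourself flag as the main obstacle.

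For a pair $(i,j)\in I\times J$ the quadratic form is $f_{ij}^2+g_{ji}^2-2(\sigma_j/\sigma_i)f_{ij}g_{ji}$. Its matrix is $\begin{psmallmatrix}1&-\sigma_j/\sigma_i\\-\sigma_j/\sigma_i&1\end{psmallmatrix}$, with eigenvalues $1\pm\sigma_j/\sigma_i$. Singular values are nonnegative, so $1+\sigma_j/\sigma_i>0$ always. The block is therefore positive definite when $\sigma_i>\sigma_j$, and has exactly one negative eigenvalue (not two) when $\sigma_i<\sigma_j$.

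So the multiplicity is exactly $1$ per inverted pair, for every $A$. Symmetry of $A$ plays no role, because $f_{ij}$ and $g_{ji}$ are independent coordinates on $\mM_r$. A factor $2$ could not be right anyway: it would push the top index to $2r(n-r)>\dim\mathrm{Gr}(r,n)$, which matches no cell.

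With $\sigma_1>\sigma_2>\cdots$, the index is $\#\{(i,j)\in I\times J: i>j\}$. This is the dimension of the Schubert cell of $\mathrm{span}\{e_i:i\in I\}$ relative to the flag $\mathrm{span}(e_1,\dots,e_k)$, equivalently the codimension for the opposite flag, and it vanishes only at $I=\{1,\dots,r\}$. As written, your Steps 3--4 leave the index undetermined up to this factor and the dimension/codimension hedge, so the last assertion of the theorem is not yet proved.

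Second, the claim ``under the genericity hypothesis none of them vanish'' is false for the hypothesis as worded. That hypothesis only makes the top $r$ singular values distinct and separated from the rest. Suppose $\sigma_{r+1}=\sigma_{r+2}>0$ and take any $I$ containing $r+1$ but not $r+2$. Then $1-\sigma_j/\sigma_i=0$ is an eigenvalue, and in fact there is a whole circle of critical points, obtained by rotating within that two-dimensional singular pair. So Step 2's claim that invariant subspaces are spanned by singular vectors also breaks.

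You must state and use that all \emph{nonzero} singular values are pairwise distinct, which is the ``distinct singular values'' of the introduction. Repeated zeros are harmless: $\sigma_j=0$ gives eigenvalues $1,1$, and $\rank X=r$ keeps $\ker A$ out of $\mathrm{row}(X)$.

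Finally, ``so the minimum is the Eckart--Young truncation'' presupposes that $f_A$ attains its infimum on the non-closed set $\mM_r$. To fix this, minimize over the closed set $\{\rank\le r\}$, where $f_A$ is coercive. A minimizer $Y$ of rank $<r$ would be strictly improved by adding a best rank-one approximation of $A-Y\ne0$. Hence the minimizer lies in $\mM_r$ and is critical, and comparing the values $\sum_{i\notin I}\sigma_i^2$ identifies it.
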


The two structural facts that make \Cref{thm:HS} work, and which we seek analogues of below, are:
\begin{itemize}
\item $\mM_r$ is (an open dense subset of) an orbit of $\GL_n(\R)\times\GL_n(\R)$ acting by $X\mapsto A_1XA_2^{-1}$; the Grassmannian $\mathrm{Gr}(r,n)=\GL_n(\R)/P_{(r,n-r)}$ does \emph{not} itself parametrize $\mM_r$ (they have different dimensions, $\dim\mM_r=r(2n-r)$ against $\dim\mathrm{Gr}(r,n)=r(n-r)$), but enters at the level of critical points: a generic target $A$ has $n$ orthogonal singular directions, and each critical point of $f_A|_{\mM_r}$ corresponds to choosing $r$ of them, i.e.\ to a coordinate point of $\mathrm{Gr}(r,n)$ relative to the frame determined by $A$.
\item The genericity statement -- distinctness of singular values -- can be phrased as an irreducibility statement for the (trivial, in this case) representation-theoretic data attached to $X$, and failure of genericity is a codimension-one condition (eigenvalue collision) on $A$.
\end{itemize}
As we will see, the Monarch parametrization $\Phi(L,R)=PLPR$ replaces the group action of the first bullet with the block subgroup $\GL(B)\times\GL(B)$ acting through the fixed permutation $P$, and its differential is governed throughout by a single relative-position algebra $B\cap PBP^\top$ (\Cref{sec:quiver}), which simultaneously plays the role of both structural facts above: it computes $\ker D\Phi$ (replacing the orbit-stabilizer computation of the first bullet, \Cref{sec:quiver}) \emph{and} the combinatorial type of $P$ relative to the double-coset decomposition of $\GL_n(\R)$ by $P_\lambda$ (replacing the Grassmannian of the first bullet, \Cref{sec:double-coset}). \Cref{sec:HS-comparison} explains exactly how \Cref{thm:HS} arises as the degenerate special case in which $\lambda$ is compared against the complete flag $(1^n)$ rather than against itself.

\section{Monarch matrices and block algebras}
\label{sec:monarch-def}

\subsection{The general definition}
The definition of Monarch matrices in \cite{DaoMonarch2022} takes $n=m^2$ and uses $m$ blocks of size $m\times m$ for both factors. For our purposes it is more natural, and loses no generality relevant to the Morse-theoretic problem, to allow an arbitrary partition.

\begin{definition}
\label{def:general-monarch}
Fix $n \in \Z_{>0}$, a partition $n = n_1 + \cdots + n_r$, and a permutation matrix $P \in \R^{n\times n}$. Let
\[
B = \bigoplus_{i=1}^r \mathrm{Mat}_{n_i}(\R)
\]
be the corresponding block-diagonal subalgebra of $\R^{n\times n}$, realized as $B = \{\diag(X_1,\dots,X_r) : X_i \in \mathrm{Mat}_{n_i}(\R)\}$. The \emph{Monarch manifold} associated to $(P, n_1,\dots,n_r)$ is the image
\[
\mM_P = \{\, PLPR \; : \; L, R \in B \cap \GL_n(\R) \,\}
\]
of the map $\Phi : B\times B \to \R^{n\times n}$, $\Phi(L,R) = PLPR$.
\end{definition}

When $r=m$ and $n_1=\cdots=n_m=m$ (so $n=m^2$) this recovers the original definition of \cite{DaoMonarch2022}. We keep the partition general because, as shown in \Cref{sec:quiver,sec:double-coset}, the correct genericity conditions on $P$ -- for both invariants of \Cref{sec:intro-two-invariants} -- depend on arithmetic relations between the $n_i$ that are invisible if one restricts to the equal-block case from the outset.

\subsection{Tangent space and differential}
Since $\Phi$ is bilinear, its differential at $(L,R)$ in the direction $(\delta L, \delta R) \in B\times B$ is
\[
D\Phi_{(L,R)}(\delta L,\delta R) = P\,\delta L\,PR + PL\,P\,\delta R,
\]
so, formally, $T_{\Phi(L,R)}\mM_P = \{\, P\,\delta L\,PR + PL\,P\,\delta R \;:\; \delta L, \delta R \in B \,\}$. This is the direct analogue of the Helmke--Shayman tangent space formula $T_X\mM_r = \{A_1X+XA_2\}$, with $\GL_n(\R)$ acting on both sides replaced by the block subgroup $\GL(B) := B\cap \GL_n(\R)$ acting through the fixed permutation $P$.

If $P$ does not split any block (i.e.\ $P$ permutes the blocks $B_1,\dots,B_r$ among themselves as sets, without dividing any $B_i$ across multiple targets), the naive picture is exactly right: the full blockwise scaling $(L_i,R_i)\sim(L_ic_i,c_i^{-1}R_i)$, $c_i\in\GL_{n_i}(\R)$, survives, since conjugating a block-diagonal matrix by a block-permuting $P$ stays block-diagonal, giving $\dim\mM_P=2\sum_in_i^2-\sum_in_i^2=\sum_in_i^2$. In general, however, $P$ splits blocks (sending part of $B_i$ to one target and part to another), and only the subalgebra $B\cap PBP^\top\subsetneq B$ of \Cref{sec:quiver} survives as genuine gauge freedom; its dimension $\sum_{i,j}a_{ij}^2$ is always at most $\sum_in_i^2$ (with equality iff $P$ splits no block, by Cauchy--Schwarz on each row of the transfer matrix), and can be far smaller once $P$ mixes blocks extensively -- down to the Monarch--Schur minimum of \Cref{thm:monarch-schur}. Consequently \emph{less} gauge freedom is quotiented out than the naive blockwise picture suggests once blocks are split, so $\dim\mM_P=2\sum_in_i^2-\sum_{i,j}a_{ij}^2$ is generally \emph{larger} than the naive blockwise estimate $\sum_in_i^2$, not smaller; the precise value is computed in \Cref{sec:quiver} via the kernel of $D\Phi$.

\section{First-order optimality conditions}
\label{sec:first-order}

Let $A\in\R^{n\times n}$ and $f_A(L,R)=\|A-PLPR\|_F^2$ on $\GL(B)\times\GL(B)$. Writing $M=PLPR$,
\[
df_A(\delta L,\delta R) = -2\langle A-M,\; P\delta L\,PR + PL\,P\delta R\rangle_F.
\]
Since $\delta L,\delta R$ range over all of $B$, stationarity of $(L,R)$ is equivalent to
\[
\Pi_B\!\left(P^\top(A-M)R^\top P^\top\right)=0, \qquad \Pi_B\!\left(P^\top L^\top P^\top(A-M)\right)=0,
\]
where $\Pi_B$ is orthogonal projection onto $B$. These are the normal equations of alternating least squares (ALS) for the two block-diagonal factors, the Monarch analogue of the Riccati-type stationarity equation of \cite{HelmkeShayman1995}.

\section{Quiver-theoretic reformulation}
\label{sec:quiver}

We now compute $\ker D\Phi\cong B\cap PBP^\top$ exactly, since it both determines $\dim\mM_P$ and, via \Cref{sec:hessian}, controls the second-order theory. Along the way we also record the companion algebra $B\cap C(P)$ and its relation to quiver representations \cite{DerksenWeyman2005,AuslanderReitenSmalo1995}, which does not itself enter the differential (\Cref{rem:end-still-fine}).

\subsection{Blocks as a semisimple algebra and $P$ as a bimodule map}
Write $V_i=\R^{n_i}$, $V=\R^n=\bigoplus_iV_i$, $B=\bigoplus_i\End(V_i)$, and let $E_i\in B$ project onto $V_i$. Decompose $P$ into blocks $P_{ji}:=E_jPE_i:V_i\to V_j$; since $P$ is a permutation matrix, each $P_{ji}$ is a \emph{partial permutation matrix}.

\begin{definition}
\label{def:quiver}
The \emph{Monarch quiver} $Q(P)$ has vertex set $\{1,\dots,r\}$ and an arrow $i\to j$ whenever $P_{ji}\neq0$. Together with the linear maps $P_{ji}$ this defines a representation $(V,P)$ of $Q(P)$.
\end{definition}

\subsection{The kernel of $D\Phi$}
\label{sec:corrected-kernel}

We first illustrate the computation of $\ker D\Phi$ on a simple example, then state and prove the general lemma.

\begin{example}
\label{ex:swap-kernel}
Take $n=2$, $\lambda=(1,1)$ (so $r=2$, $n_1=n_2=1$, $B=\R\oplus\R$ realized as diagonal matrices), and $P=\begin{psmallmatrix}0&1\\1&0\end{psmallmatrix}$ -- the permutation of \Cref{prop:flat} below. For $L=\diag(l_1,l_2)$, $R=\diag(r_1,r_2)$ with $l_i,r_i\ne0$, direct computation gives $PLP=\diag(l_2,l_1)$ and
\[
\Phi(L,R)=PLPR=\diag(l_2r_1,\,l_1r_2), \qquad D\Phi(\delta L,\delta R)=\diag(\delta l_2\,r_1+l_2\,\delta r_1,\ \delta l_1\,r_2+l_1\,\delta r_2).
\]
The kernel is cut out by two independent linear equations in the four variables $(\delta l_1,\delta l_2,\delta r_1,\delta r_2)$, so $\dim\ker D\Phi=4-2=2$.
\end{example}

\begin{lemma}
\label{lem:kernel-end}
Let $D\Phi_{(L,R)}$ be the differential of \Cref{def:general-monarch} at an invertible pair $(L,R)$. Then
\[
\ker D\Phi_{(L,R)} \;\cong\; B\cap PBP^\top,
\]
naturally, as vector spaces, via $\Psi(\delta L,\delta R):=L^{-1}\delta L$.
\end{lemma}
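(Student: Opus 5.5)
The approach is to solve the kernel equation directly. Since $P$ is a permutation matrix we have $P^{-1}=P^\top$, and $L,R$ are invertible elements of $B$. The condition $(\delta L,\delta R)\in\ker D\Phi_{(L,R)}$ reads
\[
P\,\delta L\,PR + PL\,P\,\delta R = 0 .
\]
Multiplying on the left by $P^\top$ turns this into $\delta L\,PR + LP\,\delta R=0$. Multiplying further on the left by $L^{-1}$ and on the right by $R^{-1}$ gives the equivalent equation
\[
X P = -P\,Y, \qquad X:=L^{-1}\delta L,\quad Y:=\delta R\,R^{-1}.
\]
Because $L,R\in\GL(B)$ and $B$ is an algebra, both $X$ and $Y$ lie in $B$. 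The kernel condition is therefore exactly $X=-PYP^\top$.

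The argument then has three steps.

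\begin{enumerate}[label=(\arabic*)]
\item \emph{Well-definedness.} For $(\delta L,\delta R)$ in the kernel, $\Psi(\delta L,\delta R)=X$ lies in $B$. The identity $X=-PYP^\top$ with $Y\in B$ shows that $X$ also lies in $PBP^\top$. Hence $\Psi$ maps $\ker D\Phi$ into $B\cap PBP^\top$, and $\Psi$ is clearly linear.
\item \emph{Injectivity.} Suppose $X=0$. Then $\delta L=LX=0$. The displayed equation gives $PY=0$, so $Y=0$ and hence $\delta R=YR=0$.
\item \emph{Surjectivity.} Given $X\in B\cap PBP^\top$, set $\delta L:=LX$ and $\delta R:=-P^\top XP\,R$. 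Here $\delta L\in B$ because $L,X\in B$. Also $P^\top XP\in B$ precisely because $X\in PBP^\top$, so $\delta R\in B$ as well. Substituting back, $(\delta L,\delta R)$ satisfies the kernel equation, and $\Psi(\delta L,\delta R)=X$. This gives an explicit inverse to $\Psi$.
\end{enumerate}

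Naturality amounts to the inverse being given by the canonical formula in step (3), which involves no choices.

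There is no real obstacle here; the only point needing care is the bookkeeping of which side $P$ versus $P^\top$ appears on. In particular, the conjugate must come out as $PYP^\top$ and not $P^\top YP$, so that it matches the algebra $PBP^\top$ in the statement. The computation also relies on $L$ and $R$ being invertible, as assumed in the statement. As a sanity check, I would verify against \Cref{ex:swap-kernel}. There $B\cap PBP^\top$ is the whole diagonal algebra, of dimension $2$, which matches $\dim\ker D\Phi=2$.
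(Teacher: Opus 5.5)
Your proof is correct and follows essentially the same route as the paper: you set $X=L^{-1}\delta L$ and $Y=\delta R\,R^{-1}$, use invertibility of $P,L,R$ to reduce the kernel condition to $XP+PY=0$, and exhibit the explicit inverse $X\mapsto(LX,\,-P^\top XP\,R)$. The only cosmetic difference is that you read off $X=-PYP^\top\in PBP^\top$ directly, whereas the paper solves $Y=-P^\top XP$ and then imposes $Y\in B$; these are the same condition.
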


\begin{proof}
Suppose $(\delta L,\delta R)\in\ker D\Phi_{(L,R)}$, i.e.\ $P\delta L\,PR+PL\,P\delta R=0$. Set $\mathcal A=L^{-1}\delta L\in B$, $\mathcal B=\delta R\,R^{-1}\in B$, so $\delta L=L\mathcal A$, $\delta R=\mathcal BR$. Substituting,
\[
PL\mathcal A\,PR+PL\,P\mathcal BR=0 \iff PL(\mathcal AP+P\mathcal B)R=0 \iff \mathcal AP+P\mathcal B=0,
\]
using invertibility of $P,L,R$. Since $P$ is a permutation matrix, $P^{-1}=P^\top$, so this is equivalent to $\mathcal B=-P^\top\mathcal AP$. Given $\mathcal A\in B$, this determines $\mathcal B$ uniquely, and the only remaining condition is $\mathcal B\in B$, i.e.
\[
P^\top\mathcal AP\in B \iff \mathcal A\in PBP^\top
\]
(the last equivalence because $P$ is orthogonal: $X\in PBP^\top \iff P^\top XP\in B$). Combined with $\mathcal A\in B$, the condition on $\mathcal A$ is exactly $\mathcal A\in B\cap PBP^\top$. Thus $\Psi(\delta L,\delta R)=\mathcal A=L^{-1}\delta L$ defines a linear map $\ker D\Phi_{(L,R)}\to B\cap PBP^\top$, and its inverse is $\mathcal A\mapsto(\delta L,\delta R)=(L\mathcal A,\,-P^\top\mathcal AP\,R)$, which one checks directly satisfies $D\Phi_{(L,R)}(\delta L,\delta R)=0$: indeed $P(L\mathcal A)PR+PL\,P(-P^\top\mathcal AP\,R)=PL\mathcal APR-PL\mathcal APR=0$. Hence $\Psi$ is a linear isomorphism.
\end{proof}

\begin{example}[Verification on \Cref{ex:swap-kernel}]
\label{ex:kernel-verify}
With $P=\begin{psmallmatrix}0&1\\1&0\end{psmallmatrix}$ and $B$ the diagonal matrices, $PBP^\top=P\diag(x,y)P=\diag(y,x)$ ranges over \emph{all} diagonal matrices as $(x,y)$ does, so $PBP^\top=B$ and $B\cap PBP^\top=B$, of dimension $2$ -- matching $\dim\ker D\Phi=2$ found by direct computation above.
\end{example}

\begin{example}[A single $n$-cycle]
\label{ex:3cycle-kernel}
Let $n=3$, $\lambda=(1,1,1)$, and let $P$ be the single $3$-cycle sending $1\to2\to3\to1$. Writing $L=\diag(l_1,l_2,l_3)$, $R=\diag(r_1,r_2,r_3)$, direct computation gives
\[
\Phi(L,R)=PLPR=\begin{pmatrix}0&l_3r_2&0\\0&0&l_1r_3\\l_2r_1&0&0\end{pmatrix},
\]
and the kernel of $D\Phi$ is cut out by the three independent conditions $r_2\,\delta l_3+l_3\,\delta r_2=0$, $r_3\,\delta l_1+l_1\,\delta r_3=0$, $r_1\,\delta l_2+l_2\,\delta r_2=0$ among six variables, so $\dim\ker D\Phi=3$. Since $B$ is again the full diagonal algebra and $P$ (any permutation of $\{1,2,3\}$) satisfies $PBP^\top=B$, \Cref{lem:kernel-end} gives $\dim(B\cap PBP^\top)=3$, matching exactly.
\end{example}

\begin{example}[Mixed block sizes]
\label{ex:mixed-block-kernel}
The two examples above both have $\lambda=(1,\dots,1)$, so $B$ is the full diagonal algebra and $PBP^\top=B$ for \emph{every} $P$ -- a convenient but somewhat special case. To illustrate the general mechanism, take $n=3$ with the mixed partition $\lambda=(2,1)$ (so $B=\{\diag(X,y):X\in\mathrm{Mat}_2(\R),y\in\R\}\subset\R^{3\times3}$), and let $P$ be the permutation matrix with $Pe_1=e_3,\ Pe_2=e_1,\ Pe_3=e_2$. Direct computation of $\ker D\Phi_{(L,R)}$ for generic invertible block-diagonal $L,R\in B$ (a $10$-variable linear system in $(\delta L,\delta R)$) gives $\dim\ker D\Phi=3$. The transfer matrix here is $a=\begin{psmallmatrix}1&1\\1&0\end{psmallmatrix}$ (block $1=\{1,2\}$ sends one index to itself and one to block $2=\{3\}$; block $2$ sends its one index into block $1$ and none to itself), so \Cref{lem:kernel-end,prop:levi-type} predict $\dim(B\cap PBP^\top)=\sum a_{ij}^2=1+1+1+0=3$, matching exactly.
\end{example}

\begin{remark}[The companion algebra $B\cap C(P)$]
\label{rem:end-still-fine}
Alongside $B\cap PBP^\top$, it is natural to consider the centralizer $B\cap C(P)$, which is isomorphic to the endomorphism algebra $\End_{Q(P)}(V)$ of \Cref{def:quiver}: writing $X=\diag(X_1,\dots,X_r)\in B$, the $(j,k)$ block of $XP-PX$ is $X_jP_{jk}-P_{jk}X_k$, so $XP=PX$ iff $X_jP_{jk}=P_{jk}X_k$ for every $k,j$, i.e.\ iff $(X_1,\dots,X_r)\in\End_{Q(P)}(V)$. This algebra satisfies a simple lower bound:

\begin{proposition}
\label{prop:cycle-necessary}
If $P$ decomposes into $c\ge2$ disjoint cycles, then $\dim\End_{Q(P)}(V)=\dim(B\cap C(P))\ge c$.
\end{proposition}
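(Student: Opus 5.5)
The plan is to exhibit $c$ linearly independent elements of $B\cap C(P)$, one for each cycle of $P$. Write $\{1,\dots,n\}=O_1\sqcup\cdots\sqcup O_c$ for the orbits of $P$ on the standard basis indices, so that $Pe_k=e_{\sigma(k)}$ with $\sigma(O_\ell)=O_\ell$. For each $\ell$ we take the coordinate projection
\[
D_\ell=\sum_{k\in O_\ell}e_ke_k^\top .
\]
By the identification of $B\cap C(P)$ with $\End_{Q(P)}(V)$ recorded in \Cref{rem:end-still-fine}, it suffices to work with $B\cap C(P)$ directly.

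We then check three properties of the $D_\ell$.

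\emph{Membership in $B$.} Each $D_\ell$ is diagonal. Diagonal matrices lie in $B=\bigoplus_i\mathrm{Mat}_{n_i}(\R)$ for every partition $\lambda$, since $B$ contains all diagonal matrices.

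\emph{Commutation with $P$.} We have $PD_\ell P^\top=\sum_{k\in O_\ell}e_{\sigma(k)}e_{\sigma(k)}^\top=D_\ell$, because $\sigma$ permutes $O_\ell$. Hence $PD_\ell=D_\ell P$.

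\emph{Linear independence.} The $D_\ell$ are nonzero and have pairwise disjoint diagonal supports. They are therefore linearly independent and span a $c$-dimensional subspace of $B\cap C(P)$. This gives $\dim(B\cap C(P))\ge c$.

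There is no real obstacle here. The only point needing care is that the cycles must include fixed points as $1$-cycles, so that $c$ counts all orbits of $\sigma$; the bound then holds for every $c$, and the hypothesis $c\ge2$ is not used. If the paper instead counts only nontrivial cycles, the same projections (restricted to those orbits) still give the bound.
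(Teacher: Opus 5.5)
Your proof is correct and is essentially the paper's argument: the paper likewise takes, for each cycle, the diagonal $0/1$ idempotent supported on that cycle's indices, notes it lies in $B$ and commutes with $P$ because $P$ preserves each cycle's support, and concludes from the linear independence of these $c$ idempotents. Your side remarks are also accurate: the hypothesis $c\ge2$ is never used, and counting fixed points as $1$-cycles only strengthens the bound.
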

\begin{proof}
For each cycle $\gamma$ of $P$, let $E_\gamma\in B$ be the diagonal $0/1$ matrix indexing the support of $\gamma$; since $P$ preserves the support of each of its cycles, $E_\gamma$ commutes with $P$, and the $E_\gamma$ over the $c$ cycles are linearly independent idempotents in $B\cap C(P)$.
\end{proof}

By \Cref{thm:equality} below, $B\cap C(P)\subsetneq B\cap PBP^\top$ strictly for every $P\ne\mathrm{id}$; in particular $B\cap C(P)$ is generally \emph{smaller} than $\ker D\Phi\cong B\cap PBP^\top$, and does not itself compute the differential-geometric quantities of \Cref{sec:hessian,sec:sard,sec:focal}. We record it here as a companion invariant of independent algebraic interest, but do not otherwise use it.
\end{remark}

\Cref{lem:kernel-end} identifies the ``gauge directions'' of the Monarch parametrization -- previously described only heuristically as a per-block scaling ambiguity -- with the relative-position algebra $B\cap PBP^\top$ of \Cref{sec:transfer}, computed explicitly there via the transfer matrix $a=a(P,\lambda)$ (\Cref{prop:levi-type}): $\dim(B\cap PBP^\top)=\sum_{i,j}a_{ij}^2$. In particular
\[
\dim\mM_P = 2\sum_{i=1}^rn_i^2 - \sum_{i,j}a_{ij}(P,\lambda)^2,
\]
generalizing $\dim\mM_r=r(2n-r)$.

\section{The Monarch--Schur theorem}
\label{sec:schur}

By \Cref{lem:kernel-end,prop:levi-type} (the latter proved independently in \Cref{sec:transfer} below), $\dim\ker D\Phi=\dim(B\cap PBP^\top)=\sum_{i,j}a_{ij}(P,\lambda)^2$ for the transfer matrix $a$. Finding the permutations $P$ of minimal gauge dimension is therefore a purely combinatorial optimization problem: minimize $\sum_{i,j}a_{ij}^2$ over the transfer matrices realizable for the given partition $\lambda$. We solve this exactly for the classical equal-block case of \cite{DaoMonarch2022} and give a general lower bound for arbitrary $\lambda$.

\subsection{A general lower bound}

\begin{proposition}[Cauchy--Schwarz bound]
\label{prop:cs-bound}
For every partition $\lambda=(n_1,\dots,n_r)$ and every permutation $P$,
\[
\dim\ker D\Phi \;=\;\sum_{i,j}a_{ij}^2\;\ge\;\frac1r\sum_{i=1}^rn_i^2.
\]
\end{proposition}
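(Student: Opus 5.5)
The plan is to take the equality $\dim\ker D\Phi=\sum_{i,j}a_{ij}^2$ from \Cref{lem:kernel-end} together with \Cref{prop:levi-type}, and then prove the inequality as a statement about the transfer matrix alone. The only facts needed about $a=a(P,\lambda)$ are its definition: $a_{ij}$ is the number of indices in block $i$ that $P$ sends into block $j$. Every index of block $i$ lands in exactly one block, so the $i$-th row sums to $n_i$, that is, $\sum_j a_{ij}=n_i$. Since $P$ is a bijection, the column sums are also $\sum_i a_{ij}=n_j$, although only the row sums will be used.

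For each fixed $i$, Cauchy--Schwarz applied to the vector $(a_{i1},\dots,a_{ir})$ and the all-ones vector in $\R^r$ gives
\[
n_i^2=\Big(\sum_{j=1}^r a_{ij}\Big)^2\le r\sum_{j=1}^r a_{ij}^2 .
\]
Dividing by $r$ and summing over $i$ yields $\sum_{i,j}a_{ij}^2\ge\frac1r\sum_i n_i^2$, which is the claim.

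No step here is a genuine obstacle. The main point of care is to confirm that the row sums of $a$ equal $n_i$, which is immediate from the convention that $a_{ij}$ counts indices of block $i$ mapped into block $j$. As a side remark, equality holds exactly when each row of $a$ is constant, $a_{ij}=n_i/r$. This is consistent with the equality case $a_{ij}\equiv1$ for $\lambda=(m^m)$ in \Cref{thm:monarch-schur}.
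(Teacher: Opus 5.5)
Your proof is correct and is essentially the paper's own argument. Both take $\dim\ker D\Phi=\sum_{i,j}a_{ij}^2$ from \Cref{lem:kernel-end} and \Cref{prop:levi-type}, apply Cauchy--Schwarz to each row $(a_{i1},\dots,a_{ir})$ against the all-ones vector using only $\sum_j a_{ij}=n_i$, and then sum over $i$; your equality remark also matches the paper's discussion, which additionally notes that the column sums then force $n_1=\cdots=n_r$.
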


\begin{proof}
Fix $i$. Since $\sum_ja_{ij}=n_i$ (\Cref{sec:transfer}) and there are $r$ terms $a_{i1},\dots,a_{ir}$, the Cauchy--Schwarz inequality $(\sum_ja_{ij})^2\le r\sum_ja_{ij}^2$ gives $\sum_ja_{ij}^2\ge n_i^2/r$. Summing over $i$ gives the claim.
\end{proof}

Equality in \Cref{prop:cs-bound} forces every row of $a$ to be constant, $a_{ij}=n_i/r$ for all $j$; matching this against the column constraint $\sum_ia_{ij}=n_j$ forces $n_i/r\cdot r=n_j$ for every pair $i,j$, i.e.\ $n_1=\cdots=n_r$. So the bound is tight only in the equal-block case, to which we now turn.

\subsection{The equal-block case: an exact formula}
\label{sec:equal-block-schur}

Suppose $\lambda=(m,m,\dots,m)$: $r$ blocks of common size $m$, so $n=rm$ (throughout this subsection, $r$ denotes the \emph{number} of blocks and $m$ the common \emph{size} of each block; the classical square-block convention of \cite{DaoMonarch2022} is the special case $r=m$, $n=m^2$). Write $m=qr+s$ with $q=\lfloor m/r\rfloor$, $0\le s<r$.

\begin{lemma}[Balanced rows minimize row-wise sum of squares]
\label{lem:balanced-row}
Among all ways of writing $m$ as an ordered sum $x_1+\cdots+x_r$ of $r$ nonnegative integers, $\sum_jx_j^2$ is minimized exactly when $\{x_j\}$ consists of $s$ copies of $q+1$ and $r-s$ copies of $q$, giving minimum value $s(q+1)^2+(r-s)q^2$.
\end{lemma}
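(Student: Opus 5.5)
The plan is a standard exchange (smoothing) argument followed by a uniqueness count. Since there are finitely many ordered $r$-tuples of nonnegative integers summing to $m$, a minimizer of $\sum_j x_j^2$ exists. It therefore suffices to show two things: every minimizer is \emph{balanced}, meaning $|x_j-x_k|\le 1$ for all $j,k$; and the balanced tuples are exactly those with $s$ entries equal to $q+1$ and $r-s$ entries equal to $q$.

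\textbf{Step 1 (exchange).} Suppose some tuple has $x_j\ge x_k+2$. Replace $(x_j,x_k)$ by $(x_j-1,x_k+1)$. The sum stays $m$ and all entries stay nonnegative. The sum of squares changes by
\[
(x_j-1)^2+(x_k+1)^2-x_j^2-x_k^2=-2(x_j-x_k)+2\le -2<0 .
\]
So any unbalanced tuple is strictly beaten, and every minimizer must be balanced.

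\textbf{Step 2 (balanced tuples are unique up to order).} If all entries lie in $\{t,t+1\}$ for some integer $t$, with $u$ of them equal to $t+1$ and $0\le u<r$, then $m=rt+u$. By uniqueness of Euclidean division, $t=q$ and $u=s$. The case $u=r$ is excluded by rewriting it as all entries equal to $t+1$, i.e.\ $u=0$ with $t$ replaced by $t+1$. Hence every balanced tuple is a rearrangement of $s$ copies of $q+1$ and $r-s$ copies of $q$. All such rearrangements have the same value $s(q+1)^2+(r-s)q^2$.

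\textbf{Step 3 (conclusion).} Every minimizer is balanced, and all balanced tuples share this common value. Since a minimizer exists, the minimum equals that value and is attained exactly on the balanced tuples.

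There is no real obstacle here. The only point needing care is the bookkeeping in Step 2, where uniqueness of $(q,s)$ must be stated so that the "exactly" direction holds.
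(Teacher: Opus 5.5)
Your proposal is correct and follows the paper's own proof: the same $(x_j,x_k)\mapsto(x_j-1,x_k+1)$ exchange forces all entries to lie within $1$ of each other, and then the total $m=qr+s$ is matched. Your Steps 2 and 3 (Euclidean division, existence of a minimizer, and the common value of all balanced tuples) spell out two points the paper passes over: why the entries must lie in $\{q,q+1\}$ specifically, and why every balanced tuple, not just some, attains the minimum.
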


\begin{proof}
If some $x_j\ge x_k+2$, replacing $(x_j,x_k)$ by $(x_j-1,x_k+1)$ preserves the sum and changes $\sum x^2$ by $-2x_j+1-2x_k-1+2 = -2(x_j-x_k-1)<0$ (using $x_j-x_k\ge2$), strictly decreasing it. So a minimizer must have all $x_j$ within $1$ of each other, i.e.\ every $x_j\in\{q,q+1\}$; matching the total $m=qr+s$ forces exactly $s$ of the $x_j$ to equal $q+1$.
\end{proof}

\begin{theorem}[Monarch--Schur theorem]
\label{thm:monarch-schur}
For $\lambda=(m,\dots,m)$ ($r$ blocks, $n=rm$, $m=qr+s$, $0\le s<r$),
\[
\min_{P}\dim\ker D\Phi_{(L,R)} \;=\; r\big[s(q+1)^2+(r-s)q^2\big],
\]
and this minimum is attained exactly by the permutations $P$ whose transfer matrix $a(P,\lambda)$ has, in every row \emph{and} every column, exactly $s$ entries equal to $q+1$ and $r-s$ entries equal to $q$. Such $P$ exist for every $m,r$ (\Cref{ex:balanced-construction}); we call any such $P$ a \emph{Monarch permutation} (for the partition $\lambda$) -- a purely optimization-theoretic term specific to this paper (a $\sum_{i,j}a_{ij}^2$-minimizer), not a standard notion from the prior literature on Monarch matrices, which uses ``Monarch'' only to name the parametrization $\Phi(L,R)=PLPR$ itself \cite{DaoMonarch2022} without singling out particular permutations. In the classical square-block case $n=m^2$, $r=m$ of \cite{DaoMonarch2022} (so $q=1$, $s=0$), the minimum is
\[
\min_P\dim\ker D\Phi = m^2, \qquad\text{attained exactly when } a_{ij}\equiv1 \text{ for all } i,j,
\]
i.e.\ when $P$ sends exactly one index of every block into every block (including itself). For such $P$, $\dim\mM_P=2rm^2-m^2=m^2(2r-1)$.
\end{theorem}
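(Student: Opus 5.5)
The plan is to reduce the statement to a purely combinatorial optimization over transfer matrices, and then solve that row by row. By \Cref{lem:kernel-end} and \Cref{prop:levi-type}, $\dim\ker D\Phi_{(L,R)}=\sum_{i,j}a_{ij}(P,\lambda)^2$, independently of the invertible pair $(L,R)$. So it suffices to minimize $\sum_{i,j}a_{ij}^2$ over the set of transfer matrices realizable by permutations.

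First I will characterize that set. For $\lambda=(m,\dots,m)$, a matrix $a$ of nonnegative integers is $a(P,\lambda)$ for some permutation $P$ if and only if every row sum and every column sum equals $m$. Necessity holds because each block has $m$ source indices and $m$ target indices. For sufficiency, choose for each pair $(i,j)$ a set of $a_{ij}$ indices in block $i$ and a set of $a_{ij}$ indices in block $j$. Because the row sums equal $m$, the source sets partition block $i$. Because the column sums equal $m$, the target sets partition block $j$. Matching the chosen sets bijectively for each pair $(i,j)$ then defines a permutation $P$ with transfer matrix $a$.

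Next comes the lower bound. For any $P$, apply \Cref{lem:balanced-row} to each row $(a_{i1},\dots,a_{ir})$, which sums to $m$. This gives $\sum_j a_{ij}^2\ge s(q+1)^2+(r-s)q^2$ for each $i$. Summing over the $r$ rows yields the claimed bound $r[s(q+1)^2+(r-s)q^2]$.

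The equality analysis also goes through \Cref{lem:balanced-row}. Equality holds if and only if every row attains equality, which by the lemma's exact characterization means each row has exactly $s$ entries equal to $q+1$ and $r-s$ entries equal to $q$. Columns then need no separate argument. If all entries lie in $\{q,q+1\}$ and a column sums to $m=qr+s$, that column must also contain exactly $s$ copies of $q+1$. So the row-and-column condition in the statement is equivalent to row-wise optimality together with the marginal constraints.

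Attainment is the remaining step, and the main obstacle is showing that such a matrix exists for every $m$ and $r$; the lower bound alone gives no realizing $P$. The plan is to take the circulant matrix $a_{ij}=q+1$ if $(j-i)\bmod r<s$ and $a_{ij}=q$ otherwise. Each row and each column of this matrix contains exactly $s$ entries equal to $q+1$, so all its marginals equal $m$. The realization step of the first paragraph then produces a permutation $P$ with this transfer matrix (this is \Cref{ex:balanced-construction}).

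The square case follows by specializing $r=m$, which gives $q=1$ and $s=0$. The minimum becomes $m\cdot m=m^2$, attained exactly when $a_{ij}\equiv1$. Then $\dim\mM_P=2rm^2-m^2$ from the dimension formula stated after \Cref{rem:end-still-fine}.
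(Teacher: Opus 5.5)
Your proof is correct and follows the paper's own argument. You reduce via \Cref{lem:kernel-end} and \Cref{prop:levi-type} to minimizing $\sum_{i,j}a_{ij}^2$ over $\Trans(\lambda,\lambda)$, bound each row with \Cref{lem:balanced-row}, and attain the bound with the circulant matrix of \Cref{ex:balanced-construction} via the block-matching realization. Your remark that column balance follows automatically from row balance plus the column sums $m=qr+s$ is a small but useful addition: the paper's proof argues only row-wise necessity and leaves the ``every column'' half of the exact characterization implicit.
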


\begin{proof}
By \Cref{lem:balanced-row}, every row of $a$ contributes at least $s(q+1)^2+(r-s)q^2$ to $\sum_{i,j}a_{ij}^2$, so summing over the $r$ rows gives the stated lower bound, with equality iff every row individually achieves the row-minimizing multiset $\{q^{(r-s)},(q+1)^{(s)}\}$. (This shows row-balance in every row is \emph{necessary} for any $a\in\Trans(\lambda,\lambda)$ achieving the global minimum: if some row failed to achieve its individual row-wise minimum, the sum over rows would strictly exceed the stated bound. It is not, by itself, \emph{sufficient} -- a matrix with every row individually balanced need not have its columns sum to $m$ at all, let alone be column-balanced, so an explicit construction is still needed to produce a genuine element of $\Trans(\lambda,\lambda)$ meeting the bound.) It remains to show a permutation realizing a transfer matrix with this row profile in every row \emph{and} column simultaneously exists; this is done by the explicit cyclic construction of \Cref{ex:balanced-construction} below, which also shows every row-optimal choice can be arranged to be column-optimal at the same time.
\end{proof}

\begin{example}[Explicit balanced construction]
\label{ex:balanced-construction}
Index the $r$ blocks by $\Z/r\Z$. Define $a_{ij}:= q+\mathbf1[(j-i)\bmod r< s]$. For fixed $i$, as $j$ ranges over $\Z/r\Z$, $(j-i)\bmod r$ ranges over all of $\{0,\dots,r-1\}$ exactly once, so row $i$ has exactly $s$ entries equal to $q+1$ and $r-s$ equal to $q$, summing to $qr+s=m$; the same argument for fixed $j$ (as $i$ ranges over $\Z/r\Z$) gives the matching column sums. Since the row and column sums of $a$ match exactly, $a$ is realized by an actual permutation: partition each block $i$ into groups of sizes $a_{i1},\dots,a_{ir}$ and each block $j$ into groups of sizes $a_{1j},\dots,a_{rj}$, and match the size-$a_{ij}$ groups of blocks $i$ and $j$ by an arbitrary bijection for each pair $(i,j)$; this assembles into a single permutation $P$ of $\{1,\dots,n\}$ with transfer matrix exactly $a$.
\end{example}

\begin{example}[Numerical checks]
\label{ex:schur-numerics}
Exhaustive search over all permutations confirms \Cref{thm:monarch-schur} in every case tested: $r=m=2$ ($q=1,s=0$) gives minimum $4$; $r=3,m=2$ ($q=0,s=2$) gives minimum $6$; $r=3,m=3$ ($q=1,s=0$) gives minimum $9$; $r=4,m=2$ gives minimum $8$. For non-equal partitions the minimum can exceed the Cauchy--Schwarz bound of \Cref{prop:cs-bound}: e.g.\ $\lambda=(3,1)$ has bound $\lceil10/2\rceil=5$ but true minimum $6$, and $\lambda=(4,1)$ has bound $9$ but true minimum $11$; finding the exact minimum for general (unequal) $\lambda$ is a genuine integer transportation problem that we do not solve in closed form here, though it is efficiently computable for any specific $\lambda$ by, e.g., the transportation simplex method or network-flow techniques.
\end{example}

\begin{example}[Minimizers need not be single cycles]
\label{ex:noncyclic-optimal}
The permutations achieving the minimum of \Cref{thm:monarch-schur} need not be single cycles. Already in the smallest square-block case $r=m=2$ ($n=4$), the permutation $P$ fixing indices $1,4$ and transposing $2,3$ (blocks $\{1,2\}$, $\{3,4\}$) has transfer matrix $a_{ij}\equiv1$, hence achieves the minimum $\dim\ker D\Phi=4$ of \Cref{thm:monarch-schur}, yet its cycle type is $(1)(2\,3)(4)$ -- two fixed points and a transposition, not a single $4$-cycle. (Exhaustive search shows $16$ of the $24$ permutations of $S_4$ achieve this minimum, of which most are not single $4$-cycles.) Consequently the focal-set apparatus of \Cref{sec:focal} below, which relies on a genuine cyclic action of $P$ on $\{1,\dots,n\}$, describes only a special stratum of Monarch permutations rather than the generic one; see \Cref{thm:focal}.
\end{example}

\begin{remark}
The lower bound $\dim\End_{Q(P)}(V)\ge c$ for $c$-cycle permutations (\Cref{prop:cycle-necessary}) concerns the companion algebra $B\cap C(P)$ and is not directly relevant to minimizing $\dim\ker D\Phi$. The same $2\times2$-exchange computation used to prove \Cref{lem:balanced-row} shows: if $a$ is supported in block-diagonal form (i.e.\ $Q(P)$ splits as $I\sqcup I^c$ with $a_{ij}=0$ across the split) and some entry $a_{ik}$ with $i,k\in I$, or $a_{lj}$ with $l,j\in I^c$, exceeds $1$, then routing one unit of mass across the split via the exchange $a_{ik}\!\to\!a_{ik}-1$, $a_{ij}\!\to\!a_{ij}+1$, $a_{lk}\!\to\!a_{lk}+1$, $a_{lj}\!\to\!a_{lj}-1$ strictly decreases $\sum a_{ij}^2$; hence a disconnected $Q(P)$ can be optimal only if $a$ restricted to each of $I,I^c$ is \emph{already} a $0/1$ matrix (every row and column sum $1$ within each piece), a much more restrictive condition than mere disconnection.
\end{remark}

\section{The Hessian and the operator $C(A)$}
\label{sec:hessian}

We now turn to the second-order theory, still governed by the relative-position algebra $B\cap PBP^\top$ of \Cref{sec:quiver,sec:schur}. Fix a Monarch permutation $P$ and write $\Phi(L,R)=PLPR$, $f_A=\|A-\Phi\|_F^2$.

\subsection{Exact Taylor expansion}
Because $\Phi$ is bilinear, its Taylor expansion terminates after the quadratic term: for $\delta=(\delta L,\delta R)\in B\times B$,
\[
\Phi(L+\varepsilon\delta L,R+\varepsilon\delta R) = \Phi(L,R)+\varepsilon\,D\Phi(\delta)+\varepsilon^2Q(\delta), \qquad Q(\delta):=P\delta L\,P\delta R.
\]
Substituting into $f_A$, writing $M=\Phi(L,R)$, $N:=A-M$,
\[
f_A(L+\varepsilon\delta L,R+\varepsilon\delta R) = f_A(L,R) - 2\varepsilon\langle N,D\Phi(\delta)\rangle_F + \varepsilon^2\Big[\|D\Phi(\delta)\|_F^2 - 2\langle N,Q(\delta)\rangle_F\Big] + O(\varepsilon^3).
\]
At a critical point, write $C_0(\delta,\delta):=\|D\Phi(\delta)\|_F^2-2\langle N,Q(\delta)\rangle_F$ for the coefficient of $\varepsilon^2$ appearing above; this is a genuine homogeneous quadratic form on $B\times B$. In the standard convention $f(x+\varepsilon\delta)=f(x)+\varepsilon\,Df(\delta)+\tfrac{\varepsilon^2}2\Hess f(\delta,\delta)+O(\varepsilon^3)$, this makes $C_0(\delta,\delta)=\tfrac12\Hess f_A(\delta,\delta)$, i.e.\ $C_0$ is \emph{half} the second directional derivative of $f_A$ along $\delta$, not twice it. We work throughout with $C_0$ (renamed $C(A)$ below) rather than the textbook-normalized $\Hess f_A=2C_0$; since $C_0$ and $\Hess f_A$ differ by the positive scalar $2$, they have identical inertia (numbers of positive, negative, and zero eigenvalues), so this normalization choice affects none of the Morse-theoretic conclusions of \Cref{prop:sylvester,thm:nondeg,thm:focal} below.

\subsection{Decomposition $C(A)=S+T(A)$}
Write $S:=D\Phi^*D\Phi$, the Gram operator of $D\Phi_{(L,R)}$; positive semidefinite, independent of $A$, with $\ker S=\ker D\Phi\cong B\cap PBP^\top$ by \Cref{lem:kernel-end}. Write $T(A)$ for the symmetric operator representing $\delta\mapsto-2\langle N,Q(\delta)\rangle_F$ ($\R$-linear in $A$, vanishing on the diagonal $\delta L$--$\delta L$ and $\delta R$--$\delta R$ blocks since $Q$ is bilinear rather than quadratic in either factor). Set $C(A):=S+T(A)$, so that $C(A)=C_0$ in the notation above. By construction $C_0(\delta,\delta)=\langle C(A)\delta,\delta\rangle$, i.e.\ $C(A)$ represents the quadratic form $C_0=\tfrac12\Hess f_A$ at the critical point, as a symmetric operator on $B\times B$. Only the value $\dim\ker D\Phi=\sum_{i,j}a_{ij}^2$ from \Cref{lem:kernel-end,prop:levi-type} enters this construction, e.g.\ in the dimension count $d$ of \Cref{sec:sard} below.

\subsection{Morse index}

\begin{proposition}
\label{prop:sylvester}
At a nondegenerate critical point (\Cref{sec:sard}), $C(A)$ vanishes identically on $\ker D\Phi$ and induces a nondegenerate symmetric form on the quotient $(B\times B)/\ker D\Phi$; the Morse index of $f_A$ equals the number of negative eigenvalues (with multiplicity) of this induced form, equivalently of $C(A)|_{(\ker D\Phi)^\perp}$ (an honest $d\times d$ symmetric matrix on any complement of $\ker D\Phi$, $d=\dim\mM_P$).
\end{proposition}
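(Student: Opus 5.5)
The plan is to obtain everything from one structural fact: $f_A$ is invariant under a group action whose infinitesimal orbits are exactly $\ker D\Phi$. After that, the statement reduces to the chain rule for a constant-rank map at a critical point.

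\emph{Step 1 (gauge invariance).} Let $G$ be the group of units of the algebra $B\cap PBP^\top$. Let $g\in G$ act on $\GL(B)\times\GL(B)$ by
\[
g\cdot(L,R)=(Lg,\;P^\top g^{-1}P\,R).
\]
This is well defined because $g\in B$ and $P^\top g^{-1}P\in B$; the latter holds since $g^{-1}\in B\cap PBP^\top$. Moreover $\Phi(g\cdot(L,R))=PLgPP^\top g^{-1}PR=\Phi(L,R)$, so $f_A$ is constant on $G$-orbits. Differentiating at $g=1$ in the direction $\mathcal A\in B\cap PBP^\top$ gives the orbit tangent vector $(L\mathcal A,\,-P^\top\mathcal AP\,R)$. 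By the inverse map in the proof of \Cref{lem:kernel-end}, these vectors fill out all of $\ker D\Phi_{(L,R)}$.

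\emph{Step 2 ($C(A)$ vanishes on the kernel).} Set $x=(L,R)$ and $\kappa=(L\mathcal A,-P^\top\mathcal AP R)$. Differentiate the identity $f_A(g\cdot y)=f_A(y)$ once in $y$ along an arbitrary direction $\delta$, and then in $g$ at the identity along $\mathcal A$. This yields
\[
\Hess f_A(\kappa,\delta)+Df_A|_x\big(\tfrac{d}{dt}\big|_{0}\,D(g_t)\,\delta\big)=0.
\]
The second term vanishes because $x$ is critical. Hence $C(A)(\kappa,\delta)=\tfrac12\Hess f_A(\kappa,\delta)=0$ for all $\delta\in B\times B$.

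Alternatively, one can verify this directly from the bilinear Taylor expansion. Since $D\Phi(\kappa)=0$, the $S$-part vanishes, and only $\langle N,P\kappa_L P\delta_R+P\delta_L P\kappa_R\rangle$ remains. Substituting $\kappa$ and using the stationarity equations of \Cref{sec:first-order} should kill this term. I will keep the group argument as the primary route and this direct computation as a check.

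Step 2 shows that $\ker D\Phi$ lies in the radical of the symmetric form $C(A)$. Therefore $C(A)$ descends to a well-defined symmetric form on $(B\times B)/\ker D\Phi$. For any complement $W$ of $\ker D\Phi$, the projection $W\to(B\times B)/\ker D\Phi$ is an isometry for these forms. So the inertia of $C(A)|_W$ equals the inertia of the induced form, and this is independent of the choice of $W$; in particular one may take $W=(\ker D\Phi)^\perp$.

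\emph{Step 3 (identification with the Morse index on $\mM_P$).} By \Cref{lem:kernel-end}, $\dim\ker D\Phi$ is the same at every invertible pair, so $\Phi$ has constant rank $d$. By the constant rank theorem, near $x$ the image $\mM_P$ is locally an embedded $d$-dimensional submanifold, $\Phi$ is locally a submersion onto it, and $D\Phi$ induces an isomorphism $(B\times B)/\ker D\Phi\cong T_{\Phi(x)}\mM_P$. Write $f_A=g_A\circ\Phi$ with $g_A(X)=\|A-X\|_F^2$ restricted to $\mM_P$. At a critical point the chain rule for second derivatives has no first-order correction term, because $Dg_A$ vanishes there. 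Hence
\[
\Hess f_A(\delta,\delta')=\Hess g_A(D\Phi\,\delta,\,D\Phi\,\delta').
\]
So, up to the factor $2$, the induced form is the Hessian of $g_A$ on $\mM_P$ transported by this isomorphism. Nondegeneracy of the critical point (in the sense of \Cref{sec:sard}) therefore means the induced form is nondegenerate, and the Morse index is its number of negative eigenvalues.

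The main obstacle is Step 3's justification that $\mM_P$ is genuinely a manifold near $\Phi(x)$ and that "nondegenerate critical point" is interpreted on $\mM_P$ rather than on $B\times B$. On $B\times B$ the form $C(A)$ always has the kernel $\ker D\Phi$ as a flat direction, so it can never be nondegenerate there. I would handle this by invoking the constant rank theorem as above, and by stating explicitly that nondegeneracy refers to the quotient form.
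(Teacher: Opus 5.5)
Your proof is correct, and it proves more than the paper's own argument does. The paper's proof is one sentence: the claim is ``immediate from Sylvester's law of inertia applied to $C(A)$ on the quotient.'' That sentence corresponds to the closing paragraph of your Step~2 (descent to $(B\times B)/\ker D\Phi$, and independence of the chosen complement). It takes for granted the two substantive inputs, and you actually supply both.

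\textbf{First input: $C(A)$ annihilates $\ker D\Phi$.} For $S$ this is trivial, but for $T(A)$ it requires criticality. Your gauge argument gives it uniformly in $P$, with no cyclic hypothesis: the unit group of $B\cap PBP^\top$ acts by $(L,R)\mapsto(Lg,P^\top g^{-1}PR)$, and by \Cref{lem:kernel-end} its orbit directions exhaust $\ker D\Phi$. Your proposed direct check also closes. For $\kappa=(L\mathcal A,-P^\top\mathcal APR)$, the value $C(A)(\kappa,\delta)$ equals
$-\langle \Pi_B\big((P^\top\mathcal A^\top P)\,P^\top L^\top P^\top N\big),\delta R\rangle+\langle\Pi_B\big(P^\top NR^\top P^\top\,\mathcal A^\top\big),\delta L\rangle$.
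Both terms vanish by the stationarity equations of \Cref{sec:first-order}, because $\mathcal A^\top\in B\cap PBP^\top$ and, for $b\in B$, one has $\Pi_B(bY)=b\,\Pi_B(Y)$ and $\Pi_B(Yb)=\Pi_B(Y)\,b$. This is worth having on record. The remark after \Cref{prop:gauge-nondegenerate} suggests that the vanishing of $T(A)$ on the non-scalar kernel directions depends on cyclic Fourier structure. At critical points of the unregularized $f_A$ it does not.

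\textbf{Second input: the quotient form's inertia is the Morse index ``on $\mM_P$''.} Your constant-rank plus second-order chain-rule argument identifies the quotient form with half the intrinsic Hessian of $\|A-\cdot\|_F^2$ on the local image. The paper leaves this implicit in its branch-by-branch convention of \Cref{sec:sard}.

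One phrase needs tightening: ``near $x$ the image $\mM_P$ is an embedded submanifold.'' The constant rank theorem only gives that $\Phi(U)$ is embedded for a small neighborhood $U$ of $x$, i.e.\ a branch $\mM_P^\alpha$. The set $\mM_P$ itself may self-intersect at $\Phi(x)$.
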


This is immediate from Sylvester's law of inertia applied to $C(A)$ on the quotient, which has the same inertia there as $\Hess f_A=2C(A)$ (\Cref{sec:hessian}); it replaces the Bruhat/Schubert-cell count of \Cref{thm:HS} with a direct spectral count. (We suppress the qualifier ``restricted to $(\ker D\Phi)^\perp$'' elsewhere in the paper when no ambiguity results, but it is always implicit: $C(A)$ itself, as an operator on the full space $B\times B$, is never nondegenerate, since it vanishes on $\ker D\Phi\ne0$.) \Cref{ex:cyclic-focal} shows how, at the special cyclically symmetric points of $\mM_P$ (where $L,R$ themselves commute with $P$), $C(A)$ can be block-diagonalized using a discrete Fourier decomposition.

\section{Nondegeneracy for generic targets}
\label{sec:sard}

We prove the Monarch analogue of the genericity half of \Cref{thm:HS}. The argument follows Milnor's strategy \cite[Part I, \S6]{Milnor1963} of relating critical points of a distance function on a submanifold to regular values of the normal exponential map; see also \cite{GuilleminPollack1974} for the Sard-theorem technology used.

Fix a Monarch permutation $P$ in the sense of \Cref{thm:monarch-schur} (a $\Sigma_{i,j}a_{ij}^2$-minimizer for the equal-block partition $\lambda$), $G=\GL(B)\times\GL(B)$. On the open dense subset $G^\circ\subseteq G$ where $\Phi$ has full rank $d:=2\sum_in_i^2-\dim\ker D\Phi=2\sum_in_i^2-m^2$ (constant, by \Cref{lem:kernel-end,thm:monarch-schur}, where $m$ is the common block size), $\Phi$ has constant rank $d$; by the constant rank theorem, every $g\in G^\circ$ has an open neighborhood $U_g\subseteq G^\circ$ on which $\Phi$ is an embedding, i.e.\ $\Phi(U_g)$ is a genuine embedded $d$-dimensional submanifold of $\R^{n\times n}$. We write $\mM_P:=\Phi(G^\circ)$ for the (possibly self-intersecting) union of these images. Since $G$ is second countable, $G^\circ$ admits a countable subcover $\{U_\alpha\}_{\alpha\in\mathbb{N}}$ by such embedded-image neighborhoods; write $\mM_P^\alpha:=\Phi(U_\alpha)$, each an honest embedded submanifold of $\R^{n\times n}\cong\R^{n^2}$. All statements below about $f_A$ being Morse ``on $\mM_P$'' mean: Morse on every branch $\mM_P^\alpha$ simultaneously (equivalently, $f_A\circ\Phi$ has a nondegenerate Hessian at every point of $G^\circ$); this is well-defined regardless of self-intersections, since it is checked branch by branch on embedded pieces, and a countable union of measure-zero exceptional sets (one per branch) is still measure zero.

\subsection{The normal exponential map, branch by branch}
For each $\alpha$, let $\nu(\mM_P^\alpha)=\{(x,v):x\in\mM_P^\alpha,\,v\in(T_x\mM_P^\alpha)^\perp\}$, a manifold of dimension $n^2$ (well-defined since $\mM_P^\alpha$ is embedded, so $T_x\mM_P^\alpha$ is unambiguous for $x\in\mM_P^\alpha$), and
\[
E_\alpha:\nu(\mM_P^\alpha)\to\R^{n^2}, \qquad E_\alpha(x,v)=x+v.
\]

\begin{lemma}
\label{lem:crit-corr}
For $x_0\in\mM_P^\alpha$ and $A\in\R^{n^2}$ with $A-x_0\perp T_{x_0}\mM_P^\alpha$: $x_0$ is critical for $f_A|_{\mM_P^\alpha}$ iff $(x_0,A-x_0)\in E_\alpha^{-1}(A)$; and $\Hess_{x_0}f_A$ is degenerate iff $(x_0,A-x_0)$ is a critical point of $E_\alpha$.
\end{lemma}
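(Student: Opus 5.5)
The plan is to run Milnor's argument on a single embedded branch $\mM_P^\alpha$, writing $X:=\mM_P^\alpha$ and working at $x_0\in X$. The first claim is essentially definitional. Since $df_A(x_0)(\xi)=-2\langle A-x_0,\xi\rangle_F$ for $\xi\in T_{x_0}X$, the point $x_0$ is critical for $f_A|_X$ exactly when $A-x_0\perp T_{x_0}X$. That is the same as saying $(x_0,A-x_0)\in\nu(X)$, and this pair clearly satisfies $E_\alpha(x_0,A-x_0)=A$. Conversely, if $(x,v)\in E_\alpha^{-1}(A)$ has base point $x_0$, then $v=A-x_0$ is forced. This gives the first equivalence, and we record it before turning to the Hessian.

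For the second claim, we choose local coordinates. Take a local parametrization $\varphi:U\subset\R^d\to X$ near $x_0$; the restriction $\Phi|_{U_\alpha}$ composed with a slice chart will do. Take also a smooth local orthonormal frame $\nu_1,\dots,\nu_{n^2-d}$ of the normal bundle. This gives coordinates $(u,t)\mapsto(\varphi(u),\sum_k t_k\nu_k(u))$ on $\nu(X)$, in which $E_\alpha(u,t)=\varphi(u)+\sum_k t_k\nu_k(u)$.

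The next step is to pair the columns of the Jacobian $DE_\alpha$ with the basis $\{\partial_i\varphi\}\cup\{\nu_k\}$ of $\R^{n^2}$. The $\partial_{t_k}$ columns are the $\nu_k$ themselves. So $DE_\alpha$ is singular iff the tangential components of the $\partial_{u_i}$ columns form a singular $d\times d$ matrix. That matrix is
\[
\langle\partial_i\varphi,\partial_j\varphi\rangle+\sum_kt_k\langle\partial_i\nu_k,\partial_j\varphi\rangle=\langle\partial_i\varphi,\partial_j\varphi\rangle-\langle v,\partial_i\partial_j\varphi\rangle,
\]
where $v=\sum t_k\nu_k$. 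The equality comes from differentiating $\langle\nu_k,\partial_j\varphi\rangle=0$.

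We then compare this with the Hessian of $f_A\circ\varphi$. Compute directly
\[
\tfrac12\partial_i\partial_j(f_A\circ\varphi)=\langle\partial_i\varphi,\partial_j\varphi\rangle-\langle A-\varphi,\partial_i\partial_j\varphi\rangle .
\]
At $u_0$ with $v=A-x_0$, this is exactly the matrix above. Hence $\Hess_{x_0}f_A$ is degenerate iff $DE_\alpha$ is singular at $(x_0,A-x_0)$. At a critical point the coordinate Hessian is intrinsic, so the conclusion is independent of the chart.

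The main obstacle is mostly bookkeeping, not new mathematics. We must check that the tangential/normal block decomposition really has a zero lower-left block, or at least that the determinant factors. It does: in the basis $(\partial\varphi,\nu)$ the matrix is block triangular, with the identity in the normal block. We also need the embeddedness of $X$ (guaranteed by the constant-rank construction), so that $\nu(X)$ and the frame are smooth. Finally, one could also phrase the Hessian via $C(A)$ pulled back through $\Phi$; this is consistent because $\Phi|_{U_\alpha}$ is a submersion onto $X$, but the chart computation suffices.
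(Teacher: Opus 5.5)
Your proposal is correct and follows the same route as the paper. The paper's proof is only a sketch appealing to Milnor's focal-point lemma, i.e.\ comparing the Jacobian of $E_\alpha$ with the second fundamental form. Your chart computation is exactly that argument written out in full: the Jacobian is block-triangular in the basis $\{\partial_i\varphi\}\cup\{\nu_k\}$, its tangential block $\langle\partial_i\varphi,\partial_j\varphi\rangle-\langle v,\partial_i\partial_j\varphi\rangle$ coincides with $\tfrac12\partial_i\partial_j(f_A\circ\varphi)$ at $v=A-x_0$, and this in fact shows that the nullity of $\Hess_{x_0}f_A$ equals $\dim\ker D(E_\alpha)_{(x_0,A-x_0)}$.
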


\begin{proof}[Proof sketch]
The first statement is immediate. For the second, $D(E_\alpha)_{(x_0,v)}(w,\dot v)=w+\dot v$ for $w\in T_{x_0}\mM_P^\alpha$, where $\dot v$ encodes the second fundamental form (Weingarten map) of $\mM_P^\alpha\subset\R^{n^2}$; a standard computation, structurally identical to \cite[Lemma 6.4]{Milnor1963}, identifies the kernel of $D(E_\alpha)_{(x_0,v)}$ (restricted to the fiber direction over $w$) with $\ker\Hess_{x_0}f_A$ on $T_{x_0}\mM_P^\alpha$, i.e.\ with $\ker C(A)$.
\end{proof}

\begin{theorem}
\label{thm:nondeg}
Let $P$ be a Monarch permutation. The set of $A$ for which $f_A$ is a nondegenerate Morse function on every branch $\mM_P^\alpha$ (equivalently, on $\mM_P$ in the sense above) has full Lebesgue measure and dense complement-of-measure-zero in $\R^{n\times n}\cong\R^{n^2}$.
\end{theorem}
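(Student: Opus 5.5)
The plan is Milnor's argument, applied branch by branch and then combined over the countable cover. Fix a branch index $\alpha$. The normal bundle $\nu(\mM_P^\alpha)$ is a smooth manifold of dimension $d+(n^2-d)=n^2$, and $E_\alpha$ is a smooth map between manifolds of equal dimension $n^2$. By Sard's theorem, the set $\Crit_\alpha:=E_\alpha(\{\text{critical points of }E_\alpha\})\subset\R^{n^2}$ has Lebesgue measure zero. Here $\nu(\mM_P^\alpha)$ is second countable, as an open subset of a vector bundle over a submanifold of $\R^{n^2}$, so the standard version of Sard applies without any compactness.

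Next, fix $A\notin\Crit_\alpha$ and take any critical point $x_0$ of $f_A|_{\mM_P^\alpha}$. The first half of \Cref{lem:crit-corr} shows that $A-x_0\perp T_{x_0}\mM_P^\alpha$, so $(x_0,A-x_0)\in E_\alpha^{-1}(A)$. Since $A$ is a regular value, this point is a regular point of $E_\alpha$. The second half of \Cref{lem:crit-corr} then shows that $\Hess_{x_0}f_A$ is nondegenerate. Hence $f_A|_{\mM_P^\alpha}$ is Morse for every $A\notin\Crit_\alpha$.

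To transport this to the parametrization, I would use the following facts. On $U_\alpha$ the map $\Phi$ is a constant-rank-$d$ submersion onto $\mM_P^\alpha$ with fibres tangent to $\ker D\Phi$. At a critical point, the Hessian of $f_A\circ\Phi$, which is $2C(A)$ by \Cref{sec:hessian}, is the pullback of the Hessian of $f_A|_{\mM_P^\alpha}$ under $D\Phi$. No second-fundamental-form correction arises, because first-order terms vanish at critical points. So nondegeneracy on the branch is exactly nondegeneracy of $C(A)$ on $(\ker D\Phi)^\perp$, matching \Cref{prop:sylvester}.

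Finally, set $\mathcal E:=\bigcup_{\alpha\in\mathbb N}\Crit_\alpha$. This is a countable union of null sets, hence null. Its complement therefore has full measure, and a full-measure set is automatically dense, since a null set contains no open ball. For every $A\notin\mathcal E$, $f_A$ is Morse on every branch simultaneously, which proves the theorem. If one also wanted openness of the good set, that would need properness of $E_\alpha$ and is not claimed.

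The main obstacle is the second half of \Cref{lem:crit-corr}, which is only sketched in the paper. I would prove it by computing $D(E_\alpha)_{(x_0,v)}$ in a local frame. Write $v=\sum_k t_k\nu_k$ for a smooth normal frame $\nu_k$. Then $D E_\alpha(w,\dot t)=w+\sum_k t_k\,D\nu_k(w)+\sum_k\dot t_k\nu_k$. Taking tangential and normal components shows that this vanishes iff $\dot t=0$ and $w-W_v(w)=0$, where $W_v$ is the Weingarten map. On the other side, the Hessian of $f_A=\|A-x\|^2$ at $x_0$ on $T_{x_0}\mM_P^\alpha$ is $2(\mathrm{Id}-W_{A-x_0})$. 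These two kernels coincide, which is Milnor's Lemma 6.4 verbatim. Everything else is bookkeeping, including the observation that the hypothesis ``$P$ Monarch'' is used only to make the rank $d$ constant on $G^\circ$.
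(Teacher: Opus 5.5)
Your proposal is correct and follows the paper's proof: Sard's theorem on each equidimensional normal exponential map $E_\alpha$, \Cref{lem:crit-corr} to turn regular values into nondegenerate critical points, and a countable union over branches. Your additions fill in what the paper only sketches: the Milnor-style verification of the second half of \Cref{lem:crit-corr}, the pullback identification with $C(A)$ on $(\ker D\Phi)^\perp$, and density from full measure. One small slip: $\dot t=0$ holds only if the normal frame is parallel at $x_0$; in general the normal component just determines $\dot t$ uniquely from $w$, so the kernel of $D E_\alpha$ is still isomorphic to $\ker(\mathrm{Id}-W_v)$ and the conclusion is unaffected.
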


\begin{proof}
By \Cref{lem:crit-corr}, $A$ fails to be Morse-generic on branch $\alpha$ exactly when $A$ is a critical value of $E_\alpha:\nu(\mM_P^\alpha)\to\R^{n^2}$, a smooth map between equal-dimensional manifolds. Sard's theorem \cite{Sard1942} implies each $E_\alpha(\Crit(E_\alpha))$ has measure zero; the countable union $\bigcup_\alpha E_\alpha(\Crit(E_\alpha))$ over the countably many branches is then also measure zero, and its complement -- the set of $A$ that are Morse-generic on every branch simultaneously -- is of full measure and dense.
\end{proof}

\Cref{thm:nondeg} is the precise Monarch counterpart of ``distinct singular values'' in \Cref{thm:HS}: rather than being checkable by inspection, the good set of targets is characterized as regular values of a countable family of auxiliary normal exponential maps, one per embedded branch of $\mM_P$.
\section{The focal set}
\label{sec:focal}

We describe the complementary bad set, the focal set
\[
\Focal(\mM_P) := \{A\in\R^{n\times n} : \det C(A)=0 \text{ at some critical point of } f_A\},
\]
in general (\Cref{thm:focal}), with an explicit local description at the special cyclically symmetric points when $P$ carries extra cyclic symmetry (\Cref{ex:cyclic-focal}). This is the Monarch analogue of the classical fact that the focal set of $\mM_r$ consists of matrices $A$ with a repeated active singular value.

\subsection{The Weingarten operator}
On $\mT:=(T_LG\oplus T_RG)/\ker D\Phi$, where $S$ is positive definite, define $W(A):=S^{-1/2}T(A)S^{-1/2}:\mT\to\mT$. Since $C(A)=S^{1/2}(I+W(A))S^{1/2}$ on $\mT$, $\det C(A)=0 \iff -1\in\spec(W(A))$. This construction uses only $\ker D\Phi\cong B\cap PBP^\top$ (\Cref{lem:kernel-end}) and is valid for every Monarch permutation $P$, with no cyclic hypothesis on $P$.

\subsection{The general structure of the focal set}

At a fixed point $x_0=\Phi(L,R)\in\mM_P$, the operator $C(A)=S+T(A)$ (built from the fixed $(L,R)$) is genuinely affine-linear in $A$, since $S$ depends only on $(L,R)$ and $T(A)$ is linear in $A$; but as $A$ ranges over all of $\R^{n\times n}$, the critical point $(L,R)$ solving the normal equations of \Cref{sec:first-order} for that $A$ need not be unique, and need not depend on $A$ as a single global (polynomial or even single-valued) function -- so $A\mapsto\det C(A)$ is not, in general, literally a single global polynomial on $\R^{n\times n}$. We therefore establish the measure-zero property of the focal set directly from \Cref{lem:crit-corr,thm:nondeg}, without this global-polynomial assumption:

\begin{theorem}
\label{thm:focal}
For every Monarch permutation $P$, $\Focal(\mM_P)=\bigcup_\alpha E_\alpha(\Crit(E_\alpha))$ (\Cref{lem:crit-corr}), the union over branches of the critical values of the normal exponential map $E_\alpha:\nu(\mM_P^\alpha)\to\R^{n^2}$ of \Cref{sec:sard}. Consequently $\Focal(\mM_P)$ has Lebesgue measure zero and is a proper subset of $\R^{n\times n}\cong\R^{n^2}$.
\end{theorem}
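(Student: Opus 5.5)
The proof is a set-theoretic identification followed by an appeal to Sard's theorem, as in \Cref{thm:nondeg}. I will show the two inclusions between $\Focal(\mM_P)$ and $\bigcup_\alpha E_\alpha(\Crit(E_\alpha))$ branch by branch, with \Cref{lem:crit-corr} as the dictionary.

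For ``$\subseteq$'', let $A\in\Focal(\mM_P)$. Then there is a critical point $(L,R)\in G^\circ$ of $f_A\circ\Phi$ at which $C(A)$, restricted to a complement of $\ker D\Phi$, is singular. This is how $\det C(A)=0$ must be read, following the convention stated after \Cref{prop:sylvester}, since $C(A)$ always vanishes on $\ker D\Phi$.

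First I need that $C(A)$ really vanishes on $\ker D\Phi$ at a critical point, so that it descends to $\mT$. Take $\delta=(L\mathcal A,-P^\top\mathcal APR)$ as in \Cref{lem:kernel-end}. Then $D\Phi(\delta)=0$, and $Q(\delta)=-PL\mathcal A\mathcal APR$. Using the stationarity equations of \Cref{sec:first-order}, one should check that the mixed terms $\langle N,\cdot\rangle$ against kernel directions vanish. Alternatively, $\Phi$ is constant along the curve $(L e^{t\mathcal A},e^{-t P^\top\mathcal A P}R)$, so the full second variation in kernel directions is zero and the cross terms vanish at a critical point.

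Choose $\alpha$ with $(L,R)\in U_\alpha$. Since $\Phi|_{U_\alpha}$ is an embedding onto $\mM_P^\alpha$, the Hessian of $f_A\circ\Phi$ on a complement of $\ker D\Phi$ corresponds, via $D\Phi$, to $\Hess_{x_0}f_A$ on $T_{x_0}\mM_P^\alpha$, where $x_0=\Phi(L,R)$. Criticality of $x_0$ gives $A-x_0\perp T_{x_0}\mM_P^\alpha$. By \Cref{lem:crit-corr}, $(x_0,A-x_0)$ is then a critical point of $E_\alpha$, so $A=E_\alpha(x_0,A-x_0)$ is a critical value.

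For ``$\supseteq$'', let $A=E_\alpha(x_0,v)$ with $(x_0,v)\in\Crit(E_\alpha)$. Then $v=A-x_0$ is normal at $x_0$, so $x_0$ is critical for $f_A|_{\mM_P^\alpha}$ by the first part of \Cref{lem:crit-corr}. Its Hessian is degenerate by the second part. Pulling back through $\Phi|_{U_\alpha}$ gives a critical point $(L,R)$ of $f_A\circ\Phi$ at which $C(A)$ is singular on the quotient, so $A\in\Focal(\mM_P)$.

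The measure-zero claim is then exactly the Sard argument in the proof of \Cref{thm:nondeg}. Each $E_\alpha$ is a smooth map between $n^2$-dimensional manifolds, so its critical values have measure zero, and a countable union of measure-zero sets has measure zero. A measure-zero subset of $\R^{n^2}$ cannot be all of $\R^{n^2}$, so the inclusion is proper.

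The main obstacle is the dictionary step: matching ``$C(A)$ singular on $(B\times B)/\ker D\Phi$'' with ``$\Hess_{x_0}f_A$ degenerate on $T_{x_0}\mM_P^\alpha$''. Two points need care there. The first is that $C(A)$ descends to the quotient, which I would settle by the constant-curve argument above. The second is the identification of the Weingarten term $T(A)$ with the second fundamental form paired against $N$, which is the content already assumed in \Cref{lem:crit-corr}. One further subtlety is that the focal set must be read with critical points restricted to $G^\circ$, the locus where the branches $\mM_P^\alpha$ are defined. I would state this explicitly; it is automatic for a Monarch permutation, since the rank of $D\Phi$ is constant on all of $G$ by \Cref{lem:kernel-end}.
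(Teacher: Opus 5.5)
Your proposal is correct and is essentially the paper's proof. The identity $\Focal(\mM_P)=\bigcup_\alpha E_\alpha(\Crit(E_\alpha))$ is read off branch by branch from \Cref{lem:crit-corr}, and measure zero and properness then follow from Sard's theorem and a countable union, exactly as in \Cref{thm:nondeg}.

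Your extra care only makes explicit what the paper leaves implicit: you read $\det C(A)=0$ on the quotient by $\ker D\Phi$, and you observe $G^\circ=G$ via \Cref{lem:kernel-end}. One small correction: $\Phi|_{U_\alpha}$ is a submersion onto the embedded branch $\mM_P^\alpha$, not an embedding. The second-order chain rule through that submersion already shows that $C(A)$ descends to the quotient, so the curve argument is not needed.
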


\begin{proof}
The identity $\Focal(\mM_P)=\bigcup_\alpha E_\alpha(\Crit(E_\alpha))$ is immediate from \Cref{lem:crit-corr}: $A\in\Focal(\mM_P)$ iff some critical point $x_0$ of $f_A|_{\mM_P^\alpha}$, for some branch $\alpha$, has degenerate Hessian, iff $(x_0,A-x_0)$ is a critical point of $E_\alpha$ mapping to $A$. Measure-zero and properness then follow from Sard's theorem applied to each $E_\alpha$ and a countable union, exactly as in the proof of \Cref{thm:nondeg}.
\end{proof}

\begin{example}[The cyclic case: an explicit local description, at cyclically symmetric points]
\label{ex:cyclic-focal}
Fix a point $x_0=\Phi(L,R)\in\mM_P$ and consider the restriction of $\Focal(\mM_P)$ to the affine normal slice $\{x_0+v: v\in(T_{x_0}\mM_P)^\perp\}$ through $x_0$, on which $A=x_0+v\mapsto C(A)$ \emph{is} genuinely affine-linear (as noted above, since $(L,R)$ is held fixed). Even when $P$ acts as a single $n$-cycle on $\{1,\dots,n\}$, this alone is \emph{not} enough to diagonalize $S=D\Phi^*D\Phi$ in the cyclic Fourier basis: for $\lambda=(1^n)$, $B$ is the diagonal matrices, and direct computation (verified symbolically for $n=3$) shows $S$ is invariant under the simultaneous cyclic relabeling $(\delta l_k,\delta r_k)\mapsto(\delta l_{k+1},\delta r_{k+1})$ \emph{only when $(L,R)$ is itself fixed by $P$}, i.e.\ $L=lI$, $R=rI$ for scalars $l,r$ (equivalently, $L,R$ commute with $P$); for a generic diagonal $(L,R)$, $S$ is not shift-invariant and no such diagonalization is available. At such a cyclically symmetric point (a $2$-real-dimensional sub-locus of the $n$-dimensional $\mM_P$, not a generic point of it), $S$ and $Q(\delta)=P\delta L\,P\delta R$ are simultaneously diagonalized by the Fourier basis $\chi_k(x)=\omega^{kx}$, $\omega=e^{2\pi i/n}$, with $T(x_0+v)$ acting diagonally with eigenvalues linear in $v$ mode by mode, so $\det C(x_0+v)=0$ becomes exactly a union of affine hyperplanes $\{\lambda_k(v)=-1\}$ in $v$, indexed by the nonzero Fourier modes $k$ not already absorbed into $\ker D\Phi$. This gives an explicit local description of $\Focal(\mM_P)$ near the cyclically symmetric points of $\mM_P$ specifically, not (as an earlier draft of this example suggested) near a generic point merely by virtue of $P$ being a single cycle; whether a comparably explicit description holds near generic points, and how the local pictures at different points assemble into a global one, we do not address here.
\end{example}

This completes the local (kernel, Hessian, focal set) theory built from $B\cap PBP^\top$. We now turn to the combinatorial description of this same algebra via the transfer matrix, which will let us compare $P$'s combinatorial type against the classical double-coset theory of parabolic subgroups.

\section{The transfer matrix and the relative-position algebra}
\label{sec:transfer}

Fix the partition $\lambda=(n_1,\dots,n_r)$ into blocks $B_1,\dots,B_r$ and $B=\bigoplus_i\mathrm{Mat}_{n_i}(\R)$ as before.

\begin{definition}
\label{def:transfer}
For a permutation matrix $P$, the \emph{transfer matrix} $a=a(P,\lambda)$ is the $r\times r$ nonnegative integer matrix
\[
a_{ij} := \big|\,B_i\cap P^{-1}(B_j)\,\big| = \rank(E_jPE_i), \qquad 1\le i,j\le r,
\]
i.e.\ the number of indices in block $B_i$ that $P$ sends into block $B_j$.
\end{definition}

Since $P$ is a bijection, $\sum_ja_{ij}=n_i$, $\sum_ia_{ij}=n_j$, $a_{ij}\in\Z_{\ge0}$.

\subsection{The refined partition}
Set $V_i^{(j)}:=B_i\cap P^{-1}(B_j)$, so $B_i=\bigsqcup_jV_i^{(j)}$, $|V_i^{(j)}|=a_{ij}$. The collection $\{V_i^{(j)}\}$ refines both $\lambda$ (via $i$) and $P^{-1}(\lambda)$ (via $j$).

\begin{proposition}
\label{prop:levi-type}
$B\cap PBP^\top$ equals the block-diagonal subalgebra $B_{\mathrm{ref}}$ associated to $\{V_i^{(j)}\}$:
\[
B\cap PBP^\top \;\cong\; \bigoplus_{i,j}\mathrm{Mat}_{a_{ij}}(\R), \qquad \dim(B\cap PBP^\top) = \sum_{i,j}a_{ij}^2.
\]
\end{proposition}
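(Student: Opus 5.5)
The plan is to characterize both algebras by their support patterns and compare them entry by entry. Write $\beta(k)\in\{1,\dots,r\}$ for the block containing index $k$, and let $\pi$ be the permutation with $Pe_k=e_{\pi(k)}$.

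First, $B$ is the set of matrices $X$ with $X_{kl}=0$ unless $\beta(k)=\beta(l)$. Since $(PYP^\top)_{\pi(k)\pi(l)}=Y_{kl}$, the algebra $PBP^\top$ is the set of $X$ with $X_{kl}=0$ unless $\beta(\pi^{-1}(k))=\beta(\pi^{-1}(l))$. Both are ``pattern algebras'': each is the set of matrices supported on an equivalence relation on $\{1,\dots,n\}$. The first relation is ``same $\lambda$-block''; the second is ``same block of the partition $\pi(\lambda)$'' into the sets $\pi(B_j)$.

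Second, the intersection of two such pattern algebras is the pattern algebra of the intersected equivalence relation. This holds because a matrix lies in both exactly when every nonzero entry is permitted by both patterns. Its equivalence classes are the common refinement, namely the nonempty sets $B_i\cap\pi(B_j)$. Hence $B\cap PBP^\top=\bigoplus_{i,j}\End(\R^{B_i\cap\pi(B_j)})$, and after reordering indices within each block this is a block-diagonal subalgebra isomorphic to $\bigoplus_{i,j}\mathrm{Mat}_{c_{ij}}(\R)$ with $c_{ij}=|B_i\cap\pi(B_j)|$.

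Third, we match indices with \Cref{def:transfer}, and this is the only delicate step. The refined classes are $B_i\cap\pi(B_j)$, whereas the paper's $V_i^{(j)}=B_i\cap P^{-1}(B_j)$ involves $\pi^{-1}$. We have $|B_i\cap\pi(B_j)|=|\pi^{-1}(B_i)\cap B_j|=a_{ji}$. So the refinement is indexed by the transpose of $a$, and $V_i^{(j)}$ is literally a refinement class of $B\cap P^\top BP$ rather than of $B\cap PBP^\top$.

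We resolve this in two ways. The multiset $\{a_{ij}\}$ equals $\{a_{ji}\}$, so the isomorphism type $\bigoplus_{i,j}\mathrm{Mat}_{a_{ij}}(\R)$ and the dimension $\sum_{i,j}a_{ij}^2$ are unaffected. Separately, conjugation by $P^\top$ maps $B\cap PBP^\top$ isomorphically onto $P^\top BP\cap B$, whose refinement is exactly $\{V_i^{(j)}\}$. This justifies identifying $B\cap PBP^\top$ with $B_{\mathrm{ref}}$ up to that conjugation, or literally if one adopts the convention with $P$ in place of $P^{-1}$. The statement's ``equals'' should be read with this convention noted.

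Finally, the dimension count is immediate: $\dim\bigoplus_{i,j}\mathrm{Mat}_{a_{ij}}=\sum_{i,j}a_{ij}^2$. As a sanity check, \Cref{ex:mixed-block-kernel} gives $3$.
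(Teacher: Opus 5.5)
Your proof is correct, and it takes a different and cleaner route than the paper.

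\emph{The two routes.} The paper works with the subspaces $V_i^{(j)}$ directly. It writes each $X_i$ in block form relative to $V_i=\bigoplus_j V_i^{(j)}$ and pushes vectors $s=Pt\in P(V_i^{(k)})\subseteq B_k$ through $PXP^\top$. It then shows that an off-diagonal piece $X_i^{(jk)}\neq 0$ produces a nonzero $B_j$-component, so that for $X\in B$ one has $PXP^\top\in B$ exactly when $X\in B_{\mathrm{ref}}$. You instead note that $B$ and $PBP^\top$ are both pattern algebras of equivalence relations on indices. Their intersection is therefore the pattern algebra of the intersected relation, whose classes are $B_i\cap\pi(B_j)$. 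This is a one-line argument, it extends verbatim to $B_\lambda\cap PB_\mu P^\top$ for two different partitions, and it makes the indexing explicit.

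\emph{Your caveat about $P$ versus $P^\top$ is correct.} It identifies a genuine slip in the paper's proof. The condition the paper actually verifies, $PXP^\top\in B$, characterizes $X\in P^\top BP$, not $X\in PBP^\top$. So the paper's argument literally proves $B_{\mathrm{ref}}=B\cap P^\top BP$, exactly as you say. Literal equality with $B\cap PBP^\top$ can fail. Take $\lambda=(3,1)$ with $Pe_1=e_4$, $Pe_4=e_2$, $Pe_2=e_1$, $Pe_3=e_3$. Then $V_1^{(1)}=\{2,3\}$ and $V_1^{(2)}=\{1\}$. The matrix $e_1e_3^\top$ lies in $B\cap PBP^\top$, since $P^\top e_1e_3^\top P=e_2e_3^\top\in B$. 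But $e_1e_3^\top\notin B_{\mathrm{ref}}$, because it maps $V_1^{(1)}$ into $V_1^{(2)}$.

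As you point out, this does not affect the isomorphism type or the count $\dim=\sum_{i,j}a_{ij}^2$, because the two algebras are conjugate by $P$. Those are the only facts used downstream, for example in \Cref{lem:kernel-end}.
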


\begin{proof}
Decompose $V_i=\bigoplus_jV_i^{(j)}$ as above, and write $X_i\in\End(V_i)=\mathrm{Mat}_{n_i}(\R)$ in block form relative to this decomposition,
\[
X_i = \begin{pmatrix} X_i^{(11)} & \cdots & X_i^{(1r)} \\ \vdots & & \vdots \\ X_i^{(r1)} & \cdots & X_i^{(rr)} \end{pmatrix}, \qquad X_i^{(jk)}:\ V_i^{(k)}\to V_i^{(j)},
\]
so that $X\in B_{\mathrm{ref}}$ (block-diagonal for the refined partition) exactly when $X_i^{(jk)}=0$ for all $j\ne k$ and all $i$, i.e.\ $X_i\in\bigoplus_j\End(V_i^{(j)})$ for every $i$.

Since the $V_i^{(j)}$ ($j=1,\dots,r$) partition $B_i$ and $P$ is injective, $P|_{B_i}$ decomposes as a direct sum of isomorphisms $P|_{V_i^{(j)}}:V_i^{(j)}\xrightarrow{\sim}P(V_i^{(j)})\subseteq B_j$, and the subspaces $P(V_i^{(j)})$, as $j$ varies, partition $P(B_i)$ with $P(V_i^{(j)})\subset B_j$.

Fix $i$ and $s\in P(V_i^{(k)})\subset B_k$, so $s=Pt$ for a unique $t\in V_i^{(k)}$. For $X=\diag(X_1,\dots,X_r)\in B$, compute $(PXP^\top)s = PX_it = P\Big(\sum_jX_i^{(jk)}t\Big) = \sum_jP\big(X_i^{(jk)}t\big)$, where $X_i^{(jk)}t\in V_i^{(j)}$, so $P(X_i^{(jk)}t)\in P(V_i^{(j)})\subseteq B_j$. Thus $(PXP^\top)$ maps $s\in P(V_i^{(k)})\subset B_k$ to a vector with $B_j$-component (for the original partition) equal to $P(X_i^{(jk)}t)$, for every $j$.

If $X\in B_{\mathrm{ref}}$, every $X_i^{(jk)}$ with $j\ne k$ vanishes, so $(PXP^\top)s\in B_k$ for every $s\in P(V_i^{(k)})$; since this holds for every $i,k$ and the $P(V_i^{(k)})$ (over all $i$) span $B_k$, $PXP^\top$ preserves each $B_k$, i.e.\ $PXP^\top\in B$, giving $B_{\mathrm{ref}}\subseteq B\cap PBP^\top$.

Conversely, if $X\notin B_{\mathrm{ref}}$, some $X_i^{(jk)}\ne0$ with $j\ne k$; choosing $t\in V_i^{(k)}$ with $X_i^{(jk)}t\ne0$ and $s=Pt\in B_k$, the computation above gives a nonzero $B_j$-component ($j\ne k$) of $(PXP^\top)s$, so $PXP^\top\notin B$. Hence $B\cap PBP^\top\subseteq B_{\mathrm{ref}}$, completing the equality.
\end{proof}

\begin{remark}
\Cref{prop:levi-type} identifies $B\cap PBP^\top$ as a \emph{Levi-type} subalgebra: block-diagonal, but with respect to a possibly much finer partition than $\lambda$. This is qualitatively different from the centralizer $B\cap C(P)$ of \Cref{lem:kernel-end}, which need not be block-diagonal with respect to any coarsening or refinement of $\lambda$ (it is spanned by powers of $P$ intersected with $B$); the two algebras measure genuinely different aspects of the pair $(P,\lambda)$, as \Cref{sec:equality} makes precise.
\end{remark}

\section{The transfer matrix as a double-coset invariant, and the codimension formula}
\label{sec:double-coset}

\subsection{Parabolic subgroups and relative position}
Let $P_\lambda\le\GL_n(\R)$ be the standard parabolic subgroup stabilizing the partial flag of type $\lambda$ (block upper-triangular matrices for the block order $1,\dots,r$), with Levi factor $\GL(B)=\prod_i\GL_{n_i}(\R)$. Its Lie algebra decomposes as $\mathfrak p_\lambda=B\oplus\mathfrak n$, where $\mathfrak n=\{X\in B^\perp: X_{kl}=0 \text{ unless } \beta(k)<\beta(l)\}$ is the nilradical, of dimension $\sum_{i<j}n_in_j$; consequently
\[
\dim P_\lambda=\sum_in_i^2+\sum_{i<j}n_in_j=\tfrac12\Big(n^2+\textstyle\sum_in_i^2\Big), \qquad \dim\big(\GL_n(\R)/P_\lambda\big)=\tfrac12\Big(n^2-\textstyle\sum_in_i^2\Big),
\]
using $n^2=(\sum_in_i)^2=\sum_in_i^2+2\sum_{i<j}n_in_j$. The homogeneous space $\GL_n(\R)/P_\lambda$ parametrizes partial flags of type $\lambda$. It is classical (e.g.\ \cite{FultonYoungTableaux}) that the double cosets $P_\lambda\backslash\GL_n(\R)/P_\lambda$ are in bijection with $\Trans(\lambda,\lambda)$, the set of $r\times r$ nonnegative integer matrices with row and column sums $\lambda$, via $g\mapsto a(g,\lambda)$ (applied to a permutation representative, using $\GL_n(\R)=\bigsqcup_PP_\lambda\,P\,P_\lambda$). Thus $a(P,\lambda)$ is precisely the classical invariant recording the relative position of the flags $eP_\lambda$ and $PP_\lambda$.

\subsection{Dimension of the associated cell}

\begin{theorem}
\label{thm:codim}
Let $\mathrm{cell}_a\subset\GL_n(\R)/P_\lambda$ be the $P_\lambda$-orbit of $PP_\lambda$ (under left multiplication), $a=a(P,\lambda)$. Then
\[
\dim(\mathrm{cell}_a) \;=\; \sum_{i=1}^r\sum_{j<j'}a_{ij}a_{ij'} \;+\; \sum_{i<i'}\sum_{j>j'}a_{ij}a_{i'j'},
\qquad
\codim(\mathrm{cell}_a) \;=\; \Big(\sum_{i\le i',\,j\le j'}a_{ij}a_{i'j'}\Big) - \sum_in_i^2.
\]
\end{theorem}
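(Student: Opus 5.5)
We compute $\dim(\mathrm{cell}_a)$ by orbit--stabilizer, reduce the stabilizer to a count of matrix positions, and then do inclusion--exclusion on index pairs. Since $\mathrm{cell}_a=P_\lambda\cdot PP_\lambda$, we have $\dim(\mathrm{cell}_a)=\dim P_\lambda-\dim\Stab$ with $\Stab=P_\lambda\cap PP_\lambda P^\top$. Its Lie algebra is $\mathfrak p_\lambda\cap\Ad(P)\mathfrak p_\lambda$. Both $\mathfrak p_\lambda$ and $P\mathfrak p_\lambda P^\top$ are spanned by elementary matrices $E_{kl}$, because conjugating by a permutation permutes the $E_{kl}$. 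Hence the intersection is spanned by the $E_{kl}$ lying in both, and its dimension is a pure count of positions $(k,l)$.

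Writing $\beta(k)$ for the block containing index $k$, the condition $E_{kl}\in\mathfrak p_\lambda$ is $\beta(k)\le\beta(l)$. The condition $E_{kl}\in P\mathfrak p_\lambda P^\top$ is $\beta(P^{-1}k)\le\beta(P^{-1}l)$. So we label each index $k$ by the pair $(\beta(P^{-1}k),\beta(k))$. By \Cref{def:transfer}, the number of $k$ with label $(i,j)$ is $|B_i\cap P^{-1}(B_j)|=a_{ij}$, since $k=Pt$ with $t\in B_i$ and $Pt\in B_j$. A position $(k,l)$ with labels $(i,j)$ and $(i',j')$ survives exactly when $i\le i'$ and $j\le j'$. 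This gives
\[
\dim\Stab=\Sigma:=\sum_{i\le i',\,j\le j'}a_{ij}a_{i'j'}.
\]
Combining this with $\dim P_\lambda=\tfrac12(n^2+\sum_in_i^2)$ and $\dim\GL_n(\R)/P_\lambda=\tfrac12(n^2-\sum_in_i^2)$, both recorded just before the theorem, we get
\[
\codim(\mathrm{cell}_a)=\tfrac12\Big(n^2-\textstyle\sum_in_i^2\Big)-\tfrac12\Big(n^2+\textstyle\sum_in_i^2\Big)+\Sigma=\Sigma-\textstyle\sum_in_i^2,
\]
which is the second formula.

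For the first formula we must show that $\tfrac12(n^2+\sum_in_i^2)-\Sigma$ equals the stated expression. We write $n^2=\sum a_{ij}a_{i'j'}$ over all ordered pairs of labels, and $\sum_in_i^2=\sum_i\sum_{j,j'}a_{ij}a_{ij'}$; the row sums are the right marginals here, although the column version gives the same value. We then split the ordered label pairs into six regions according to the sign of $i'-i$ (tie or strict) and of $j'-j$. Using the symmetry that swaps the two labels, each strict region pairs with its mirror image. This expresses both $n^2$ and $\Sigma$ through four quantities:
\[
D=\sum_{i,j}a_{ij}^2,\qquad R=\sum_i\sum_{j<j'}a_{ij}a_{ij'},\qquad C=\sum_j\sum_{i<i'}a_{ij}a_{i'j},
\]
\[
U=\sum_{i<i',\,j<j'}a_{ij}a_{i'j'},\qquad V=\sum_{i<i',\,j>j'}a_{ij}a_{i'j'}.
\]
Concretely, $n^2=D+2R+2C+2U+2V$, $\Sigma=D+R+C+U$, and $\sum_in_i^2=D+2R$. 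Substituting gives $\tfrac12(2D+4R+2C+2U+2V)-(D+R+C+U)=R+V$, which is exactly the displayed formula $\sum_i\sum_{j<j'}a_{ij}a_{ij'}+\sum_{i<i'}\sum_{j>j'}a_{ij}a_{i'j'}$.

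The main obstacle is the bookkeeping of ties. The pairs with $i=i'$ and the pairs with $j=j'$, including the diagonal term $D$, must each be assigned to exactly one region, and one must use the row marginals, not the column marginals, to match the asymmetric form of the first formula. A smaller point needs checking in the stabilizer step: $\dim\Stab$ equals the dimension of its Lie algebra, because the intersection of two algebraic subgroups is a closed subgroup whose Lie algebra is the intersection of their Lie algebras. The orbit map $P_\lambda\to\GL_n(\R)/P_\lambda$ then has constant rank, so orbit--stabilizer applies in the smooth category.
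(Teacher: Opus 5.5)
Your proposal is correct and is essentially the paper's proof. Both use orbit--stabilizer with $\Stab=P_\lambda\cap PP_\lambda P^{-1}$, count the surviving positions $(k,l)$ grouped by the labels $(\beta(P^{-1}k),\beta(k))$ to get $\dim\Stab=\sum_{i\le i',\,j\le j'}a_{ij}a_{i'j'}$, and then use the same decomposition ($D,R,C,U,V$, the paper's $D,A_1,A_2,A_3,A_3'$) to reduce $\dim P_\lambda-\Sigma$ to $R+V$, with your explicit justifications (the intersection of the two Lie algebras is the coordinate subspace spanned by the common $E_{kl}$, and the stabilizer's dimension equals that of its Lie algebra) filling in steps the paper leaves implicit.
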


\begin{proof}
By the orbit-stabilizer theorem, $\dim(\mathrm{cell}_a)=\dim(P_\lambda)-\dim\big(\Stab_{P_\lambda}(PP_\lambda)\big)$, and $\Stab_{P_\lambda}(PP_\lambda)=\{p\in P_\lambda: P^{-1}pP\in P_\lambda\}=P_\lambda\cap P P_\lambda P^{-1}$, so at the Lie algebra level we must compute $\dim(\mathfrak p_\lambda\cap\Ad(P)\mathfrak p_\lambda)$.

Write $\beta:\{1,\dots,n\}\to\{1,\dots,r\}$ for the block-assignment function ($\beta(k)=i$ iff $k$ indexes $B_i$). By definition, $\mathfrak p_\lambda=\{X: X_{kl}=0 \text{ whenever }\beta(k)>\beta(l)\}$. Since $P$ is a permutation matrix, $(\Ad(P)X)_{k'l'}=(P^\top XP)_{k'l'}=X_{P(k'),P(l')}$, so $X\in\Ad(P)\mathfrak p_\lambda$ iff $X_{kl}=0$ whenever $\beta(P^{-1}(k))>\beta(P^{-1}(l))$. Hence
\[
\dim(\mathfrak p_\lambda\cap\Ad(P)\mathfrak p_\lambda)=\#\{(k,l): \beta(k)\le\beta(l) \text{ and } \beta(P^{-1}(k))\le\beta(P^{-1}(l))\}.
\]
Group the index pairs $(k,l)$ by the pair of block-labels $\big(\beta(k),\beta(P^{-1}(k))\big)$ and $\big(\beta(l),\beta(P^{-1}(l))\big)$: for fixed $(j,i)$, the number of indices $k$ with $\beta(k)=j$ and $\beta(P^{-1}(k))=i$ is exactly $a_{ij}$ (the definition of the transfer matrix, since $k=P(k')$ for $k'\in B_i$ with $P(k')\in B_j$). Since the condition ``$\beta(k)\le\beta(l)$ and $\beta(P^{-1}(k))\le\beta(P^{-1}(l))$'' depends only on these two label pairs, summing over all pairs of labels $(j,i)\preceq(j',i')$ (meaning $j\le j'$ and $i\le i'$, including equality in both) gives
\[
\dim(\mathfrak p_\lambda\cap\Ad(P)\mathfrak p_\lambda)=\sum_{i\le i',\,j\le j'}a_{ij}\,a_{i'j'} =: S_\le(a).
\]
(The diagonal terms $i=i',j=j'$ contribute $\sum_{i,j}a_{ij}^2$, accounting for pairs $k=l$.) Therefore
\[
\dim(\mathrm{cell}_a)=\dim(P_\lambda)-S_\le(a)=\tfrac12\big(n^2+\textstyle\sum_in_i^2\big)-S_\le(a).
\]
Subtracting from $\dim(\GL_n(\R)/P_\lambda)=\tfrac12(n^2-\sum_in_i^2)$ gives $\codim(\mathrm{cell}_a)=S_\le(a)-\sum_in_i^2$, proving the second formula. For the first (manifestly nonnegative) formula, decompose $S_\le(a)=D+A_1+A_2+A_3$ where $D=\sum_{i,j}a_{ij}^2$, $A_1=\sum_i\sum_{j<j'}a_{ij}a_{ij'}$, $A_2=\sum_j\sum_{i<i'}a_{ij}a_{i'j}$, $A_3=\sum_{i<i',j<j'}a_{ij}a_{i'j'}$, and note $\sum_in_i^2=\sum_i(\sum_ja_{ij})^2=D+2A_1$; substituting into $\dim(P_\lambda)-S_\le(a)$ and using $\dim(P_\lambda)=D+2A_1+A_2+A_3+A_3'$ (where $A_3':=\sum_{i<i',j>j'}a_{ij}a_{i'j'}$, from the same expansion of $n^2$ used above) gives $\dim(\mathrm{cell}_a)=A_1+A_3'$, as stated.
\end{proof}

\begin{example}
\label{ex:codim-checks}
For $\lambda=(1,1)$ ($n=2$) and $P=\mathrm{id}$ ($a=I$), both formulas give $\dim(\mathrm{cell}_a)=0$: the coset $eP_\lambda$ is fixed pointwise by $P_\lambda$, so its orbit is trivially the single point itself, independent of any convention. For the swap $P$ ($a=\begin{psmallmatrix}0&1\\1&0\end{psmallmatrix}$), $\dim(\mathrm{cell}_a)=A_3'=a_{12}a_{21}=1=\dim(\GL_2(\R)/P_{(1,1)})$: the swap realizes the open dense cell of $\GL_2(\R)/P_{(1,1)}\cong\mathbb P^1$. For $\lambda=(2,2)$ ($n=4$, so $\GL_4(\R)/P_{(2,2)}\cong\mathrm{Gr}(2,4)$, dimension $4$): the transfer matrix $a=\begin{psmallmatrix}0&2\\2&0\end{psmallmatrix}$ (realized by any $P$ mapping $B_1$ bijectively onto $B_2$) gives $\dim(\mathrm{cell}_a)=A_3'=a_{12}a_{21}=4$, the open dense cell (two generic $2$-planes in $\R^4$ meet only at $0$); whereas $a=\begin{psmallmatrix}1&1\\1&1\end{psmallmatrix}$ gives $A_1=a_{11}a_{12}+a_{21}a_{22}=2$, $A_3'=a_{12}a_{21}=1$, so $\dim(\mathrm{cell}_a)=3$, a codimension-$1$ Schubert divisor (the two $2$-planes meet in a line). All values were confirmed by direct computation of $\dim(\mathfrak p_\lambda\cap\Ad(P)\mathfrak p_\lambda)$ for explicit permutations realizing each $a$.
\end{example}

\begin{remark}
$P=\mathrm{id}$ (for which $a_{ij}=n_i\delta_{ij}$) always gives $\codim(\mathrm{cell}_{a})=\dim(\GL_n(\R)/P_\lambda)$, i.e.\ $\dim(\mathrm{cell}_\mathrm{id})=0$: the identity coset is the unique \emph{minimal} (most degenerate) cell, not the maximal one, since it is fixed pointwise by all of $P_\lambda$. This can be checked directly from the formula: $a_{ij}=n_i\delta_{ij}$ gives $A_1=A_3'=0$ (every term involves either $j<j'$ or $j>j'$ with both factors having $j=i,j'=i'$ forced by the Kronecker delta, which is incompatible with $i\ne i'$ giving a zero factor), so $\dim(\mathrm{cell}_\mathrm{id})=0$ for every $\lambda$. The permutations achieving the \emph{open} dense cell ($\codim=0$) depend on $\lambda$ in a way we do not characterize in general here (\Cref{ex:codim-checks} exhibits one for $\lambda=(2,2)$); see the remark following \Cref{prop:extremal} below.
\end{remark}

\section{Sharpness of the containment $B\cap C(P)\subseteq B\cap PBP^\top$}
\label{sec:equality}

For $X\in B\cap C(P)$ one has $PXP^\top=PXP^{-1}=X\in B$, giving the inclusion
\[
B\cap C(P) \;\subseteq\; B\cap PBP^\top.
\]
We show this inclusion is strict as soon as $P\ne\mathrm{id}$.

\begin{theorem}
\label{thm:equality}
$B\cap C(P) = B\cap PBP^\top$ holds if and only if $P=\mathrm{id}$.
\end{theorem}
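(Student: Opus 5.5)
The plan is to prove the two directions separately. Both reduce to the observation that every diagonal matrix lies in the relative-position algebra, whereas a diagonal matrix commutes with $P$ only when it is constant along the cycles of $P$.

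The direction $P=\mathrm{id}\Rightarrow$ equality is immediate. If $P=\mathrm{id}$, then $C(P)=\mathrm{Mat}_n(\R)$ and $PBP^\top=B$, so both sides equal $B$.

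For the converse I will argue by contraposition. Assume $P\ne\mathrm{id}$ and choose an index $k$ with $P(k)\ne k$. Let $D:=E_{kk}$ be the diagonal matrix unit. The first step is to show $D\in B\cap PBP^\top$. By \Cref{prop:levi-type}, $B\cap PBP^\top$ equals the block-diagonal algebra $B_{\mathrm{ref}}$ attached to the refined partition $\{V_i^{(j)}\}$. Every diagonal matrix is block-diagonal for any partition of $\{1,\dots,n\}$ into coordinate subsets, so $D\in B_{\mathrm{ref}}$. One can also check this directly without \Cref{prop:levi-type}: conjugating a diagonal matrix by a permutation matrix gives a diagonal matrix, so $PBP^\top$ contains all diagonal matrices, and so does $B$.

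The second step is to show $D\notin C(P)$. Since $P$ is a permutation matrix, $PE_{kk}P^\top=E_{P(k)P(k)}$, up to the paper's convention of $P$ versus $P^{-1}$, which does not affect the argument. This differs from $E_{kk}$ because $P(k)\ne k$. Hence $PD\ne DP$, so $D\in(B\cap PBP^\top)\setminus(B\cap C(P))$. Together with the inclusion $B\cap C(P)\subseteq B\cap PBP^\top$ recorded just before the theorem, this shows the inclusion is strict.

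I do not expect a real obstacle. The only point needing care is the index convention: whether $P$ sends $e_k$ to $e_{P(k)}$ or $e_{P^{-1}(k)}$. Either way, a non-fixed index gives a diagonal unit that is moved by conjugation. If desired, I would add a remark making the gap quantitative: the diagonal part of $B\cap C(P)$ has dimension equal to the number of cycles of $P$, while $B\cap PBP^\top$ contains all $n$ diagonal directions.
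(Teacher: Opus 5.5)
Your proof is correct, and it takes a genuinely different and shorter route than the paper's. The paper argues in two stages. It first uses the refined-partition description $B\cap PBP^\top=B_{\mathrm{ref}}$ of \Cref{prop:levi-type}: for any $i\ne j$ with $a_{ij}>0$, it builds an element supported on $V_i^{(j)}$ that vanishes on $V_j$ but does not commute with $P$. This forces the transfer matrix to be diagonal, so $P$ preserves every block. It then observes that in that case $B\cap PBP^\top=B$, so $B\subseteq C(P)$ would force each within-block permutation $\pi_i$ to be scalar, hence the identity.

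Your observation handles both stages at once: every diagonal matrix lies in $B\cap PBP^\top$ for every $P$, while $E_{kk}$ commutes with $P$ only when $k$ is a fixed point. In fact the paper's witness in its first stage, with $X_i^{(j)}=E_{11}$, is itself just a diagonal matrix unit $E_{kk}$ for an index $k$ that $P$ moves to another block. You use the same kind of witness for indices moved within a block as well. This makes the case split, the appeal to \Cref{prop:levi-type}, and the centralizer-of-a-permutation step unnecessary.

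Your route also gives the quantitative refinement you mention. The diagonal matrices commuting with $P$ are exactly those constant on the cycles of $P$, and all diagonal matrices lie in $B\cap PBP^\top$. Hence $\dim(B\cap PBP^\top)-\dim(B\cap C(P))\ge n-c$, where $c$ is the number of cycles of $P$ (fixed points included). The paper's version keeps the argument in the transfer-matrix language of \Cref{sec:transfer}, but for this statement it is not stronger than yours.
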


\begin{proof}
($\Leftarrow$) Both sides equal $B$ when $P=\mathrm{id}$.

($\Rightarrow$) Suppose $B\cap PBP^\top\subseteq C(P)$ (the reverse containment being automatic). Fix $i,j$ with $i\ne j$ and $a_{ij}>0$; by the construction underlying \Cref{prop:levi-type}, $V_i$ decomposes as $\bigoplus_{j'}V_i^{(j')}$ with $\dim V_i^{(j')}=a_{ij'}$ and $P$ mapping $V_i^{(j')}$ isomorphically onto a subspace $P(V_i^{(j')})\subseteq V_j'$, and an element $X\in B\cap PBP^\top$ acts on each $V_i^{(j')}$ by an independent, arbitrary matrix $X_i^{(j')}\in\mathrm{Mat}_{a_{ij'}}(\R)$ (with the compatible action $P X_i^{(j')} P^{-1}$ on $P(V_i^{(j')})\subseteq V_{j'}$). Take $X\in B\cap PBP^\top$ with $X_i^{(j)}\ne0$ and every other block of $X$ zero (in particular $X$ acts as $0$ on all of $V_{j}$, since $j\ne i$ and $X$'s only nonzero block is supported on $V_i^{(j)}\subseteq V_i$). Choose a nonzero $t\in V_i^{(j)}$ and, since $a_{ij}\ge1$, choose $X_i^{(j)}$ so that $X_i^{(j)}t\ne0$ (e.g.\ in coordinates with $t=e_1$, take $X_i^{(j)}=E_{11}$). Then:
\begin{itemize}
\item $XPt=0$: since $t\in V_i^{(j)}$, $Pt\in P(V_i^{(j)})\subseteq V_j$, and $X$ acts as the zero map on all of $V_j$ by construction.
\item $PXt\ne0$: $Xt=X_i^{(j)}t\ne0$ lies in $V_i^{(j)}\subseteq V_i$, and $P$ is injective, so $P(Xt)\ne0$.
\end{itemize}
Hence $XPt\ne PXt$, so $X\notin C(P)$, contradicting $X\in B\cap PBP^\top\subseteq C(P)$. This contradiction shows no such $i\ne j$ with $a_{ij}>0$ can exist: $a_{ij}=0$ for all $i\ne j$, i.e.\ $P$ preserves each block $B_i$ setwise.

It remains to rule out nontrivial internal permutations $\pi_i:=P|_{B_i}$. With $a_{ij}=n_i\delta_{ij}$, \Cref{prop:levi-type} gives $B\cap PBP^\top=B$, so the hypothesis becomes $B\subseteq C(P)$: every $X_i\in\mathrm{Mat}_{n_i}(\R)$ commutes with $\pi_i$. The centralizer of a permutation matrix $\pi_i$ equals all of $\mathrm{Mat}_{n_i}(\R)$ iff $\pi_i$ is central in $\GL_{n_i}(\R)$, which for a permutation matrix forces $\pi_i=\mathrm{id}$ (for $n_i>1$). Hence $\pi_i=\mathrm{id}$ for every $i$, i.e.\ $P=\mathrm{id}$.
\end{proof}

\begin{remark}
\Cref{thm:equality} shows that the companion algebra $B\cap C(P)=\End_{Q(P)}(V)$ (\Cref{rem:end-still-fine}) is a strictly smaller, differently-behaved invariant of $P$ than $B\cap PBP^\top\cong\ker D\Phi$, coinciding with it only at $P=\mathrm{id}$. Since $\ker D\Phi\cong B\cap PBP^\top$ (\Cref{lem:kernel-end}), the transfer matrix $a$ determines $\dim\ker D\Phi$ exactly and always, via $\dim\ker D\Phi=\sum_{i,j}a_{ij}^2$; \Cref{thm:equality} instead isolates a genuinely different, and smaller, algebraic invariant of $P$, whose geometric significance for the Monarch approximation problem remains open.
\end{remark}

\section{Comparison with the Helmke--Shayman Bruhat decomposition}
\label{sec:HS-comparison}

We now explain precisely how \Cref{thm:codim} relates to the classical Schubert-cell computation underlying \Cref{thm:HS}, and why the exact Morse-index/codimension coincidence of \Cref{thm:HS} does not persist for general $\lambda$.

\subsection{The Helmke--Shayman setting as $\mu=(1^n)$}
In \cite{HelmkeShayman1995}, a generic target $A$ determines a \emph{complete} flag, i.e.\ $\mu=(1^n)$, via its distinct singular vectors; a critical point of $f_A|_{\mM_r}$ corresponds to a subset $S\subset\{1,\dots,n\}$, $|S|=r$ (the chosen singular directions), i.e.\ to a $2\times n$ transfer matrix ($\lambda=(r,n-r)$, $\mu=(1^n)$) with $a_{1j}=\mathbf1[j\in S]$, $a_{2j}=\mathbf1[j\notin S]$, relative to the fixed reference flag determined by $A$. (As noted in \Cref{sec:intro-HS}, $\mM_r$ itself is not the Grassmannian $\GL_n(\R)/P_{(r,n-r)}$ -- it is the larger $\GL_n(\R)\times\GL_n(\R)$-orbit of rank-$r$ matrices; the Grassmannian enters only at the level of this critical-point classification, via the relative position of $S$ against the flag of $A$.)

\subsection{Degeneration at the complete flag}
\Cref{thm:codim} was derived for the relative position of $\lambda$ against itself (via $P$); the classical Helmke--Shayman setting instead compares $\lambda$ against the complete flag $\mu=(1^n)$, i.e.\ concerns the relative-position variety $P_\lambda\backslash\GL_n(\R)/P_{(1^n)}$ for \emph{two different} parabolics. Re-deriving \Cref{thm:codim} in this generality -- with $B_\lambda\ne B_\mu$ on the two sides -- requires separate treatment that we do not carry out in detail here; we record only the qualitative picture. When $\mu=(1^n)$, the relative-position datum $a$ (now an $r\times n$ $0/1$ matrix, $a_{ij}=\mathbf 1[j\in S_i]$ for a partition $\{1,\dots,n\}=\bigsqcup_iS_i$ with $|S_i|=n_i$) retains no information about the internal \emph{ordering} of each part $S_i$, since $B_{(1^n)}$ is abelian and the finest possible partition carries no further block structure to align against. The correct classical invariant at this level of refinement is instead the torus-equivariant one: the Bruhat--Chevalley length function on $S_n$ restricted to minimal coset representatives $S_n^{P_\lambda}$, computing cell dimension as the inversion number
\[
\ell(S) = \#\{(i,j) : i\in S,\ j\notin S,\ i>j\}
\]
(for $\lambda=(r,n-r)$, $S\subset\{1,\dots,n\}$, $|S|=r$), recovering exactly \Cref{thm:HS}. See \cite[Ch.~2]{BjornerBrenti} for the general parabolic Bruhat order framework encompassing both this classical inversion-number formula and the equal-partition case of \Cref{thm:codim}.

\subsection{Why the Morse-index coincidence is special to $\mu=(1^n)$}
\Cref{thm:HS} asserts that the Morse index at the critical point indexed by $S$ equals $\ell(S)$ \emph{exactly}, with no correction term. This is possible only because, when $\mu=(1^n)$, the cell $\mathrm{cell}_S$ is a single $P_\lambda$-orbit that is itself a point modulo the maximal torus -- there is no continuous modulus within a fixed combinatorial type once the reference flag is complete -- so the entire second-order theory (Hessian signature) is determined by the discrete datum $S$ alone.

For the Monarch setting ($\mu=\lambda$, the same partition compared against itself via $P$), the cell $\mathrm{cell}_a$ has positive dimension (\Cref{thm:codim}) whenever $a$ is not of identity type, so a single combinatorial type $a$ does \emph{not} pin down a Morse index; by \Cref{prop:sylvester} the index is a further spectral invariant of $C(A)=S+T(A)$, computed only after fixing a specific target $A$ and a specific Monarch permutation within the (typically positive-dimensional) cell $\mathrm{cell}_a$. This extra step is genuine second-order, $A$-dependent spectral data -- an eigenvalue count of $C(A)$ at a specific target -- rather than a sharper $A$-independent combinatorial refinement of $P$ alone; both the combinatorial datum $a$ and the operator $C(A)$ are built from the same algebra $B\cap PBP^\top$.

\subsection{Summary table}

\begin{center}
\begin{tabularx}{\textwidth}{@{}l>{\raggedright\arraybackslash}X>{\raggedright\arraybackslash}X@{}}
\hline
& Helmke--Shayman & Monarch (this paper) \\
\hline
Fixed partition & $\lambda=(r,n-r)$ & $\lambda=(n_1,\dots,n_r)$ \\
Comparison partition & $\mu=(1^n)$ (complete flag) & $\mu=\lambda$ (self-comparison via $P$) \\
Relative-position invariant & $0/1$ vector $S$ & general matrix $a\in\Trans(\lambda,\lambda)$ \\
Cell dimension formula & inversion number $\ell(S)$ & $A_1+A_3'$, a sum of products $a_{ij}a_{i'j'}$ (\Cref{thm:codim}) \\
Cell $=$ single point? & yes (mod torus) & no, in general positive-dimensional \\
Morse index $=$ cell codim? & yes, exactly (\cite{HelmkeShayman1995}) & no; requires the spectrum of $C(A)$ (\Cref{sec:hessian}) \\
\hline
\end{tabularx}
\end{center}

\section{The Birkhoff polytope and the dominance order}
\label{sec:birkhoff}

\subsection{Transportation polytopes}
Fix $\lambda=(n_1,\dots,n_r)$ and consider
\[
\Trans(\lambda,\lambda) = \Big\{a\in\Z_{\ge0}^{r\times r} : \textstyle\sum_ja_{ij}=n_i,\ \sum_ia_{ij}=n_j\ \forall i,j\Big\},
\]
the lattice points of the generalized transportation polytope $\Trans_\R(\lambda,\lambda)\subset\R^{r\times r}$. When $\lambda=(1^n)$, $\Trans_\R(\lambda,\lambda)$ is the classical \emph{Birkhoff polytope} of doubly stochastic matrices, whose vertices are the permutation matrices \cite{Birkhoff1946}; for general $\lambda$ it is more accurately called a \emph{generalized transportation polytope} rather than a face of the Birkhoff polytope.

\begin{proposition}[Realizability]
\label{prop:realizability}
$\{a(P,\lambda) : P\in S_n\} = \Trans(\lambda,\lambda)$: every nonnegative integer matrix with row and column sums $\lambda$ is the transfer matrix of some permutation.
\end{proposition}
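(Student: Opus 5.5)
The plan is to prove the two inclusions separately. The inclusion $\{a(P,\lambda)\}\subseteq\Trans(\lambda,\lambda)$ is the observation recorded right after \Cref{def:transfer}. Since $P$ is a bijection of $\{1,\dots,n\}$, the sets $V_i^{(j)}=B_i\cap P^{-1}(B_j)$ partition $B_i$ as $j$ varies, so $\sum_ja_{ij}=|B_i|=n_i$. Likewise the images $P(V_i^{(j)})$ partition $B_j$ as $i$ varies, so $\sum_ia_{ij}=|B_j|=n_j$. All entries are cardinalities, hence nonnegative integers.

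For the reverse inclusion I will run the construction already sketched at the end of \Cref{ex:balanced-construction}, now for an arbitrary $a\in\Trans(\lambda,\lambda)$.

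\emph{Step 1 (source side).} For each $i$, cut $B_i$ into consecutive disjoint subsets $S_{i1},\dots,S_{ir}$ with $|S_{ij}|=a_{ij}$. This is possible because $\sum_ja_{ij}=n_i=|B_i|$; empty pieces are allowed when $a_{ij}=0$.

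\emph{Step 2 (target side).} For each $j$, cut $B_j$ into disjoint subsets $T_{1j},\dots,T_{rj}$ with $|T_{ij}|=a_{ij}$. This uses the column-sum condition $\sum_ia_{ij}=n_j$.

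\emph{Step 3 (matching).} For every pair $(i,j)$, choose any bijection $\sigma_{ij}:S_{ij}\to T_{ij}$; the two sets have the same size $a_{ij}$. Define $\sigma$ on $\{1,\dots,n\}$ by $\sigma|_{S_{ij}}=\sigma_{ij}$.

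The $S_{ij}$ over all pairs $(i,j)$ partition $\{1,\dots,n\}$, and so do the $T_{ij}$. Since each $\sigma_{ij}$ is a bijection between corresponding pieces, $\sigma$ is a bijection from one partition onto the other, hence a permutation. Let $P$ be its permutation matrix. By construction, $k\in B_i$ is sent into $B_j$ exactly when $k\in S_{ij}$, so $B_i\cap P^{-1}(B_j)=S_{ij}$ and $a(P,\lambda)_{ij}=a_{ij}$.

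I expect no real obstacle. The only point needing care is checking that the pieces on each side genuinely partition $\{1,\dots,n\}$, which is exactly where the row-sum and column-sum conditions are used. There is also a convention to fix consistently with \Cref{def:transfer}, namely whether $P$ sends index $k$ to $\sigma(k)$, i.e.\ $Pe_k=e_{\sigma(k)}$. If the transposed convention is used instead, the same construction goes through after swapping the roles of rows and columns of $a$; this is harmless because $\Trans(\lambda,\lambda)$ is invariant under transposition.
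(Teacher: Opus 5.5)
Your proposal is correct and follows the paper's proof essentially verbatim: the paper also cuts each $B_i$ into source pieces of sizes $a_{i1},\dots,a_{ir}$ and each $B_j$ into target pieces of sizes $a_{1j},\dots,a_{rj}$, then matches same-size pieces by arbitrary bijections. You also spell out the forward inclusion, which the paper leaves to the remark after \Cref{def:transfer}, and you fix the index convention $Pe_k=e_{\sigma(k)}$; both are harmless additions.
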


\begin{proof}
Given $a\in\Trans(\lambda,\lambda)$, partition $B_i$ into groups of sizes $a_{i1},\dots,a_{ir}$ and $B_j$ into groups of sizes $a_{1j},\dots,a_{rj}$, and match the size-$a_{ij}$ groups of $B_i$ and $B_j$ by an arbitrary bijection for each $(i,j)$; the row and column sum constraints guarantee this uses up each block exactly once and assembles into a single permutation $P$ of $\{1,\dots,n\}$ with $a(P,\lambda)=a$.
\end{proof}

\Cref{prop:realizability} (used already in \Cref{ex:balanced-construction}) is what justifies reducing the minimization $\min_P\dim\ker D\Phi$ of \Cref{sec:schur} to the purely combinatorial optimization $\min_{a\in\Trans(\lambda,\lambda)}\sum_{i,j}a_{ij}^2$ over the polytope's lattice points, with no permutations left unaccounted for.

\subsection{Dominance order and parabolic Bruhat order}

\begin{definition}
For $a,a'\in\Trans(\lambda,\lambda)$, write $a\preceq a'$ if $\sum_{i\le i_0,j\le j_0}a_{ij} \ge \sum_{i\le i_0,j\le j_0}a'_{ij}$ for all $i_0,j_0$.
\end{definition}

\begin{theorem}
\label{thm:dominance}
$(\Trans(\lambda,\lambda),\preceq)$ is isomorphic, via $P\mapsto a(P,\lambda)$, to the parabolic Bruhat order on $P_\lambda\backslash\GL_n(\R)/P_\lambda$.
\end{theorem}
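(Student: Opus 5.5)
The plan is to reduce \Cref{thm:dominance} to the classical rank-matrix description of the parabolic Bruhat order, in three steps.

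\textbf{Step 1: the bijection.} The map $P\mapsto a(P,\lambda)$ descends to a bijection between $P_\lambda\backslash\GL_n(\R)/P_\lambda$ and $\Trans(\lambda,\lambda)$. This follows from the classical double-coset parametrization quoted in \Cref{sec:double-coset}, with surjectivity supplied by \Cref{prop:realizability}.

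\textbf{Step 2: a rank formula for the partial sums.} Let $F_\bullet$ be the standard partial flag of type $\lambda$, with $F_{i_0}=\bigoplus_{i\le i_0}V_i$. For any $g\in P_\lambda PP_\lambda$, I will show
\[
\sum_{i\le i_0,\,j\le j_0}a_{ij}=\dim\bigl(F_{j_0}\cap gF_{i_0}\bigr).
\]
Two checks are needed.
\begin{enumerate}[label=(\alph*)]
\item The right-hand side depends only on the double coset. Right multiplication by $P_\lambda$ preserves $F_{i_0}$. Left multiplication by $p\in P_\lambda$ replaces $F_{j_0}\cap gF_{i_0}$ by its image under $p$, because $pF_{j_0}=F_{j_0}$; so the dimension is unchanged.
\item For the permutation representative $g=P$, both $F_{j_0}$ and $PF_{i_0}$ are coordinate subspaces. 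Their intersection is spanned by the $e_k$ with $k\in B_i$ for some $i\le i_0$ and $P(k)\in B_j$ for some $j\le j_0$. Counting these by \Cref{def:transfer} gives exactly $\sum_{i\le i_0,j\le j_0}a_{ij}$.
\end{enumerate}
So $r_a(i_0,j_0):=\sum_{i\le i_0,j\le j_0}a_{ij}$ is the classical rank function of the relative position of the two flags.

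\textbf{Step 3: comparing the orders.} I take the parabolic Bruhat order to be closure order on the orbits $\mathrm{cell}_a$; equivalently, via the Bruhat order on minimal double-coset representatives in $S_n$ \cite[Ch.~2]{BjornerBrenti}.

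\emph{Closure implies dominance.} Each condition $\dim(F_{j_0}\cap gF_{i_0})\ge c$ is Zariski closed in $g$, by lower semicontinuity of rank. So if $\mathrm{cell}_{a'}\subseteq\overline{\mathrm{cell}_a}$, then $r_{a'}\ge r_a$ entrywise. The orientation must be checked against \Cref{ex:codim-checks}. The identity has the largest partial sums and is the minimal cell, so I expect the order-preserving map to be $a\preceq a'\iff\mathrm{cell}_a\subseteq\overline{\mathrm{cell}_{a'}}$. The proof will pin down whichever orientation the definition of $\preceq$ forces.

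\emph{Dominance implies closure.} This is the main obstacle. I would first try the standard reduction: if $r_{a'}\ge r_a$ entrywise with $a\ne a'$, produce a single exchange move. Pick a $2\times2$ minor $(i,i',j,j')$, $i<i'$, $j<j'$, with $a'_{ij},a'_{i'j'}>0$. The move sends $a'_{ij},a'_{i'j'}$ down by $1$ and $a'_{ij'},a'_{i'j}$ up by $1$. It should give a matrix $a''$ with $r_a\le r_{a''}\le r_{a'}$ that covers $a'$ geometrically.

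The geometric covering comes from a one-parameter family. In $\GL_2$ acting on the two indices involved, the coset $\begin{psmallmatrix}1&t\\0&1\end{psmallmatrix}\cdot(\text{swap})$ degenerates as $t\to\infty$, which exhibits $\mathrm{cell}_{a'}\subseteq\overline{\mathrm{cell}_{a''}}$. Induction on $\sum_{i_0,j_0}(r_{a'}-r_a)$ then finishes the proof.

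The delicate combinatorial point is choosing the minor so that the move does not overshoot $r_a$. I would choose $(i,j)$ lexicographically minimal with $r_{a'}(i,j)>r_a(i,j)$, in the style of the Ehresmann/Deodhar tableau criterion. I expect this to be where care is needed.
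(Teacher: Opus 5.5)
Your route differs from the paper's. The paper gives no argument for this theorem, only a citation (Bj\"orner--Brenti for two blocks, Proctor in general). You instead give a self-contained proof via the rank function and one-parameter degenerations, which also yields the rank-inequality description of $\overline{\mathrm{cell}_a}$ that the citation leaves implicit.

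Most of it is sound. The identity $r_a(i_0,j_0)=\dim(F_{j_0}\cap gF_{i_0})$ is correct and double-coset invariant, and semicontinuity gives closure $\Rightarrow$ dominance. The orientation you expect, $a\preceq a'\iff\mathrm{cell}_a\subseteq\overline{\mathrm{cell}_{a'}}$, is the right one, with the identity at the bottom. The $2\times2$ degeneration also works once written out. Take $k\in B_i$, $k'\in B_{i'}$ with $i<i'$ and $P'(k)\in B_j$, $P'(k')\in B_{j'}$ with $j<j'$, and let $\tau$ swap $P'(k),P'(k')$, so that $P''=\tau P'$ realizes $a''$. Then $u_t=I+tE_{P'(k),P'(k')}\in P_\lambda$ because $j<j'$. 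On those two coordinates $u_t\tau=\begin{psmallmatrix}1&0\\t^{-1}&1\end{psmallmatrix}\begin{psmallmatrix}t&1\\0&-t^{-1}\end{psmallmatrix}=\ell_tb_t$, with $\ell_t\to I$ and $P'^{-1}b_tP'\in P_\lambda$ because $i<i'$. Hence $u_tP''P_\lambda=\ell_tP'P_\lambda\to P'P_\lambda$.

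The one genuine gap is the exchange lemma in the dominance $\Rightarrow$ closure direction, which you flag but leave as an expectation. The lexicographic rule fixes only the first corner of the minor. The second corner cannot be an arbitrary positive entry of $a'$ to the south-east. For example, take $\lambda=(1^4)$, $a'=I$ and $a$ the matrix of $2143$. Then $D:=r_{a'}-r_a$ is positive only at $(1,1)$ and $(3,3)$. Pairing $(1,1)$ with $(3,3)$ gives $3214$, which overshoots because $D(1,2)=0$.

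Here is one way to close the gap. The move lowers $r$ by exactly $\mathbf 1_{[i,i'-1]\times[j,j'-1]}$, so you need $a'_{ij},a'_{i'j'}\ge1$ and $[i,i'-1]\times[j,j'-1]\subseteq Z:=\{D\ge1\}$.
\begin{itemize}
\item Take $(i,j)$ lex-minimal in $Z$. Then $c:=a'-a$ has $c_{ij}=D(i,j)\ge1$, so $a'_{ij}\ge1$.
\item Let $W$ be the set of $(s,t)$ with $s\ge i$, $t\ge j$ and $[i,s]\times[j,t]\subseteq Z$. This is a staircase with row lengths $\ell_i\ge\dots\ge\ell_{i+h-1}\ge1$. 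It lies in $[1,r-1]^2$, since $D$ vanishes on the last row and column by equality of margins.
\item Every outer corner of $W$ lies outside $Z$, because all other points of its rectangle are in $Z$. Hence $D(i,j+\ell_i)=D(i+h,j)=0$, and $D(\sigma,j+\ell_\sigma)=0$ whenever $\ell_{\sigma-1}>\ell_\sigma$.
\item Telescope $c_{st}=D(s,t)-D(s-1,t)-D(s,t-1)+D(s-1,t-1)$ over $\Omega:=W+(1,1)$ to get
\[
\sum_{(s,t)\in\Omega}c_{st}=D(i,j)+\sum_{\sigma}D(\sigma,j+\ell_{\sigma-1})\ \ge\ D(i,j)\ \ge\ 1.
\]
Here $\sigma$ runs over $i+h$ and those $i<\sigma<i+h$ with $\ell_{\sigma-1}>\ell_\sigma$.
\item Since $a\ge0$, $a'$ has a positive entry $(i',j')\in\Omega$. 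Then $(i'-1,j'-1)\in W$, which is exactly the required rectangle.
\end{itemize}
With this lemma your induction on $\sum_{i_0,j_0}(r_{a'}-r_a)(i_0,j_0)$ goes through.
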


\begin{proof}
This identification of the dominance order with the parabolic Bruhat order is classical; see \cite[Ch.~2]{BjornerBrenti} for the two-block (Grassmannian) case and \cite{Proctor} for the general multi-block case.
\end{proof}

\begin{remark}
It is natural to ask whether $\codim(\mathrm{cell}_a)$, computed explicitly in \Cref{thm:codim}, is monotone along $\preceq$. We do not verify this here: unlike the (elementary, and independently verified) codimension formula itself, matching it against the specific combinatorial order $\preceq$ requires identifying which extreme of $\preceq$ corresponds to which extreme of the Bruhat order, a direction we have not checked and leave for future work (see \Cref{prop:extremal} below for what is established directly).
\end{remark}

\begin{proposition}
\label{prop:extremal}
$P=\mathrm{id}$ (for which $a_{ij}=n_i\delta_{ij}$) gives $\dim(\mathrm{cell}_\mathrm{id})=0$ and $\codim(\mathrm{cell}_\mathrm{id})=\dim(\GL_n(\R)/P_\lambda)$: the identity permutation realizes the unique minimal, single-point cell (\Cref{thm:codim} and the remark following it), the most degenerate relative position, not the open dense one. \Cref{ex:codim-checks} exhibits, for $\lambda=(2,2)$, a permutation realizing the open dense (codimension-$0$) cell; identifying such a permutation for general $\lambda$, and relating this combinatorial extremal question to the dominance order of \Cref{thm:dominance}, is left open.
\end{proposition}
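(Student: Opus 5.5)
The plan is to read the two assertions of \Cref{prop:extremal} off \Cref{thm:codim} by specializing it to $a_{ij}=n_i\delta_{ij}$. Uniqueness of the single-point cell is then a separate orbit-theoretic step.

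\textbf{Step 1 (computation).} For $P=\mathrm{id}$, \Cref{def:transfer} gives $a_{ij}=|B_i\cap B_j|=n_i\delta_{ij}$. I would use the manifestly nonnegative form $\dim(\mathrm{cell}_a)=A_1+A_3'$ from \Cref{thm:codim}, with $A_1=\sum_i\sum_{j<j'}a_{ij}a_{ij'}$ and $A_3'=\sum_{i<i',\,j>j'}a_{ij}a_{i'j'}$.
\begin{itemize}
\item Each term of $A_1$ has two entries in the same row $i$ but in different columns. At least one of them is off-diagonal, so it vanishes.
\item Each term of $A_3'$ is nonzero only if $j=i$ and $j'=i'$. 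Then $i<i'$ forces $j<j'$, which contradicts $j>j'$.
\end{itemize}
Hence $\dim(\mathrm{cell}_{\mathrm{id}})=0$, and $\codim(\mathrm{cell}_{\mathrm{id}})=\dim(\GL_n(\R)/P_\lambda)$. As a cross-check, the second formula of \Cref{thm:codim} gives $S_\le(a)=\sum_{i\le i'}n_in_{i'}$. Subtracting $\sum_i n_i^2$ leaves $\sum_{i<i'}n_in_{i'}=\tfrac12(n^2-\sum_in_i^2)$, which matches. More directly, $P_\lambda$ fixes $eP_\lambda$, so the orbit is a point.

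\textbf{Step 2 (uniqueness).} I expect this to be the only real content. The cell $\mathrm{cell}_a$ is a single point exactly when $P_\lambda\subseteq PP_\lambda P^{-1}$. Comparing dimensions, this is equivalent to equality of the two groups, which holds iff $\mathfrak p_\lambda=\Ad(P)\mathfrak p_\lambda$.

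Equivalently, $\dim(\mathrm{cell}_a)=A_1+A_3'=0$. Since every term is a nonnegative product, both sums vanish termwise.
\begin{itemize}
\item $A_1=0$ means each row of $a$ has exactly one nonzero entry. Since row $i$ sums to $n_i$ and columns sum to $\lambda$, $a$ is a block permutation matrix with $a_{i\sigma(i)}=n_i$, where $n_{\sigma(i)}=n_i$.
\item $A_3'=0$ means there is no pair $i<i'$ with $\sigma(i)>\sigma(i')$. So $\sigma$ has no inversions, i.e. $\sigma=\mathrm{id}$.
\end{itemize}
This gives $a=\diag(n_i)$. By the classical bijection between double cosets and $\Trans(\lambda,\lambda)$ (\Cref{sec:double-coset}), $\mathrm{cell}_{\mathrm{id}}$ is the unique point cell, i.e. the identity double coset. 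It is minimal because it lies in the closure of every orbit, being the unique closed $P_\lambda$-orbit (a point).

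The remaining clauses of \Cref{prop:extremal} (the $\lambda=(2,2)$ example, and the open problem) need no proof beyond citing \Cref{ex:codim-checks}.
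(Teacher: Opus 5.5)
Your Step 1 is the paper's own argument. The remark following \Cref{thm:codim} substitutes $a_{ij}=n_i\delta_{ij}$ into $A_1+A_3'$, observes that every term contains a factor killed by the Kronecker delta, and notes directly that $P_\lambda$ fixes $eP_\lambda$.

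Your Step 2 goes beyond the paper. The paper asserts that the identity cell is the \emph{unique} minimal one, but its only justification is that ``it is fixed pointwise by all of $P_\lambda$''. That shows the cell is a point, not that no other double coset yields one. Your termwise analysis of $A_1+A_3'=0$ closes this gap correctly:
\begin{itemize}
\item $A_1=0$ forces one nonzero entry per row.
\item Since every $n_i>0$, every column sum is positive, so the resulting map $\sigma$ is a bijection.
\item $A_3'=\sum_{i<i',\,\sigma(i)>\sigma(i')}n_in_{i'}=0$ then forces $\sigma=\mathrm{id}$.
\end{itemize}
Hence $a=\diag(n_1,\dots,n_r)$ is the only type with $\dim(\mathrm{cell}_a)=0$, and $\mathrm{cell}_{\mathrm{id}}$ is the unique zero-dimensional, hence unique single-point, cell.

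The one unsupported step is your final sentence. What you have shown is that $\mathrm{cell}_{\mathrm{id}}$ is the unique \emph{zero-dimensional} orbit, not the unique \emph{closed} orbit. Nothing in your argument rules out a compact positive-dimensional $P_\lambda$-orbit. Even granting uniqueness of the closed orbit, concluding that it ``lies in the closure of every orbit'' needs a further fact: every orbit closure contains a closed orbit, because orbit boundaries consist of orbits of strictly smaller dimension.

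How to repair this depends on what ``minimal'' means:
\begin{itemize}
\item If it means minimal in the closure (Bruhat) order, cite the classical closure relations for parabolic double cosets, in which the identity is the minimum of the Bruhat order, or give an explicit degeneration argument. The paper supplies no argument for this reading either.
\item If it means minimal dimension, your Step 2 already proves the full statement, and that last sentence can simply be dropped.
\end{itemize}
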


\subsection{Connectivity of $Q(P)$ from $a$}

\begin{proposition}
\label{prop:connectivity}
If $Q(P)$ is disconnected as an undirected graph (equivalently $a$ is block-diagonal after simultaneously permuting rows and columns), then $P$ decomposes into at least two disjoint cycles.
\end{proposition}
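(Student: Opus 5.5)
The plan is to turn a disconnection of $Q(P)$ into a nontrivial $P$-invariant subset of $\{1,\dots,n\}$, and hence into a splitting of the cycle decomposition. Suppose $Q(P)$, viewed as an undirected graph on the vertex set $\{1,\dots,r\}$, is disconnected. Choose a connected component and call its vertex set $I$. Then $I$ and its complement $I^c$ are both nonempty, and there is no arrow between $I$ and $I^c$ in either direction. Since an arrow $i\to j$ exists exactly when $P_{ji}\ne0$, equivalently when $a_{ij}>0$ (\Cref{def:transfer}), this means $a_{ij}=0$ whenever exactly one of $i,j$ lies in $I$. This is the block-diagonal form of $a$ mentioned in the statement.

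Next, set $U:=\bigsqcup_{i\in I}B_i\subseteq\{1,\dots,n\}$. I would show that $P(U)\subseteq U$. Take $k\in B_i$ with $i\in I$, and let $j$ be the block with $P(k)\in B_j$. Then $k$ is counted in $a_{ij}$, so $a_{ij}>0$, and therefore $j\in I$. Hence $P(U)\subseteq U$. Because $P$ is a bijection of a finite set and $U$ is finite, this forces $P(U)=U$. The same argument applied to $I^c$ gives $P(U^c)=U^c$.

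Finally, recall that every cycle of $P$ is an orbit of $\langle P\rangle$. An invariant set is a union of orbits, so each cycle lies entirely inside $U$ or entirely inside $U^c$. Both sets are nonempty, since every block is nonempty and $I$, $I^c$ are nonempty. Each of them therefore contains at least one cycle, where fixed points count as $1$-cycles, consistent with the usage in \Cref{prop:cycle-necessary} and \Cref{ex:noncyclic-optimal}. So $P$ has at least two disjoint cycles.

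The main obstacle is not mathematical but one of conventions. First, I should confirm that "disconnected as an undirected graph" correctly yields the absence of arrows in both directions between components; this is immediate once components are taken in the undirected sense. Second, I need to state explicitly that fixed points count as cycles; otherwise a disconnected example with $P=\mathrm{id}$ and $r\ge2$ would give a single "nontrivial" cycle count of zero. Everything else is the short invariance argument above, using no earlier result beyond \Cref{def:transfer}.
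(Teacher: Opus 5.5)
Your proposal is correct and follows the same route as the paper: the union of the blocks indexed by one component is $P$-invariant, so it and its complement are each unions of cycles of $P$, which gives at least two cycles. Your extra steps (deriving $P(U)=U$ from $a_{ij}=0$ across the split together with finiteness, noting that both sides are nonempty, and counting fixed points as $1$-cycles) only make explicit what the paper's one-line proof leaves implicit.
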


\begin{proof}
If $Q(P)$ disconnects $\{1,\dots,r\}=I\sqcup I^c$ with $a_{ij}=0$ across the partition, then $P$ maps $\bigcup_{i\in I}B_i$ bijectively to itself, forcing at least one cycle of $P$ within each of $\bigcup_{i\in I}B_i$ and $\bigcup_{i\in I^c}B_i$, so $P$ has $\ge2$ cycles.
\end{proof}

\begin{remark}
Connectivity of $Q(P)$ is necessary but not sufficient for $P$ to be a single cycle, since $a$ records only \emph{how many} indices flow between blocks, not \emph{which}, and hence cannot distinguish a single long cycle from several shorter ones inducing the same aggregate flow; see the discussion following \Cref{thm:monarch-schur} for what can be said about connectivity of an optimal $Q(P)$.
\end{remark}

\section{The two-tier hierarchy}
\label{sec:hierarchy}

Since $\dim\ker D\Phi=\sum_{i,j}a_{ij}^2$ is determined by the transfer matrix $a$ alone (\Cref{lem:kernel-end,prop:levi-type}), genericity of $P$ reduces to a two-tier chain of conditions:
\[
\underbrace{\{P : a(P,\lambda)\in\Trans(\lambda,\lambda)\}}_{\text{all permutations}} \;\supsetneq\; \underbrace{\{P : a(P,\lambda)\text{ minimizes }\textstyle\sum_{i,j}a_{ij}^2\}}_{\text{Monarch permutations (\Cref{thm:monarch-schur})}}.
\]
The containment is proper as soon as $\Trans(\lambda,\lambda)$ has more than one element realizing the extremes of $\sum a_{ij}^2$, which happens for every $\lambda$ with $r\ge2$: $P=\mathrm{id}$ (with $a_{ij}=n_i\delta_{ij}$) realizes the maximum of $\sum a_{ij}^2$ over $\Trans(\lambda,\lambda)$, by the same exchange argument as \Cref{lem:balanced-row} (moving mass off the diagonal into a second cell in the same row, while compensating elsewhere in its column, cannot increase -- and generically decreases -- the sum of squares), and is never a Monarch permutation for $r\ge2$.

\begin{corollary}
\label{cor:hierarchy}
Genericity of $P$ for Monarch matrix approximation, in the equal-block case $\lambda=(m,\dots,m)$, is governed by a single necessary-and-sufficient combinatorial condition: minimality of $\sum_{i,j}a_{ij}(P,\lambda)^2$ (\Cref{thm:monarch-schur}), equivalently maximality of $\dim\mM_P=2rm^2-\sum_{i,j}a_{ij}^2$, computable directly from the transfer matrix.
\end{corollary}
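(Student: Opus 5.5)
The plan is to reduce the corollary to two facts already established: the dimension formula for $\mM_P$ and the Monarch--Schur theorem. First, by \Cref{lem:kernel-end} and \Cref{prop:levi-type}, at every invertible pair $(L,R)$ we have $\dim\ker D\Phi_{(L,R)}=\sum_{i,j}a_{ij}(P,\lambda)^2$, independently of $(L,R)$. Hence $D\Phi$ has constant rank $2\sum_i n_i^2-\sum_{i,j}a_{ij}^2=2rm^2-\sum_{i,j}a_{ij}^2$ on all of $\GL(B)\times\GL(B)$, and by the constant rank theorem this number is the dimension of $\mM_P$. So $\dim\mM_P$ depends on $P$ only through the transfer matrix $a(P,\lambda)$, and it is a strictly decreasing function of $\sum_{i,j}a_{ij}^2$. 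This gives the stated equivalence between maximality of $\dim\mM_P$ and minimality of $\sum a_{ij}^2$.

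Second, I would use \Cref{prop:realizability} to show that minimizing over $P\in S_n$ is the same as minimizing over $\Trans(\lambda,\lambda)$. By \Cref{thm:monarch-schur}, the minimum is $r[s(q+1)^2+(r-s)q^2]$, attained exactly on the balanced transfer matrices. The combinatorial condition is therefore explicit and can be checked in $O(r^2)$ time from $a$: every row and column of $a$ must contain exactly $s$ entries equal to $q+1$ and $r-s$ entries equal to $q$.

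Third, I would justify reading ``genericity'' in the sense of this paper. The set of Monarch permutations is exactly the set of $P$ for which the hypotheses of \Cref{thm:nondeg} and \Cref{thm:focal} hold, namely that the constant rank equals the maximal value $d=2rm^2-\min_P\dim\ker D\Phi$. Sufficiency is immediate from the definition. Necessity holds because, for any other $P$, $\dim\mM_P<d$ by the first step. Finally, I would note that the upper tier of the hierarchy imposes no condition, since by \Cref{prop:realizability} every $P$ has $a(P,\lambda)\in\Trans(\lambda,\lambda)$. So only the lower tier carries content, which is what ``single'' condition means in the statement.

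The main obstacle is conceptual rather than computational. ``Genericity'' has no independent definition beyond maximal $\dim\mM_P$, so the proof risks being tautological. I would therefore make this definition explicit at the outset and present the corollary as a clean repackaging of \Cref{thm:monarch-schur}. The one substantive point to verify carefully is the constant-rank claim: the kernel dimension must not drop or jump at special $(L,R)$. This follows because \Cref{lem:kernel-end} holds at every invertible pair.
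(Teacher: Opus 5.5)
Your proposal is correct and takes essentially the paper's route: the paper justifies the corollary in the paragraph preceding it. There it notes that $\dim\ker D\Phi=\sum_{i,j}a_{ij}^2$ depends on $P$ only through $a(P,\lambda)$ (\Cref{lem:kernel-end}, \Cref{prop:levi-type}), so maximal $\dim\mM_P$ is the same as minimal $\sum_{i,j}a_{ij}^2$, which \Cref{thm:monarch-schur} characterizes explicitly, and the upper tier of the hierarchy is simply all permutations. One small correction: $a(P,\lambda)\in\Trans(\lambda,\lambda)$ follows directly from $P$ being a bijection (the row and column sums noted after \Cref{def:transfer}), whereas \Cref{prop:realizability} is what you need for the converse, i.e.\ to replace $\min_P$ by $\min_{a\in\Trans(\lambda,\lambda)}$.
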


\section{Global convergence of the gradient flow: obstructions}
\label{sec:global-obstruction}

Having established the local Morse theory of $f_A$ on $\mM_P$ (\Cref{sec:hessian,sec:sard,sec:focal}), we now address Open Problem~(5): global convergence of the negative gradient flow
\[
\dot L = -\nabla_Lf_A, \qquad \dot R = -\nabla_Rf_A, \qquad f_A(L,R) = \|A-PLPR\|_F^2,
\]
on $B\times B$. We first explain precisely why the classical template for such results -- exemplified by \cite{YoshizawaHelmkeStarkov2001} for the Oja--Brockett flow and its extensions in \cite{Yoshizawa2023} -- does not carry over, and exhibit the obstruction explicitly. \Cref{sec:tikhonov} then shows how a Tikhonov regularization repairs the argument completely.

\subsection{The classical template}
\label{sec:classical-template}

The convergence proofs of \cite{YoshizawaHelmkeStarkov2001,Yoshizawa2023} for gradient flows of the Oja--Brockett type $\dot X = AXD-XDX^\top AX$ follow a two-step template:
\begin{enumerate}[label=(\alph*)]
\item \emph{Coercivity.} The cost function $f:\R^{n\times k}\to\R$ has compact sublevel sets, so solutions exist for all $t\ge0$ and remain in a fixed compact set.
\item \emph{Real-analytic gradient inequality.} Since $f$ is real analytic (indeed polynomial), the {\L}ojasiewicz gradient inequality \cite{Lojasiewicz1963,Lojasiewicz1983} applies at every point of the (compact) $\omega$-limit set, forcing convergence of the bounded trajectory to a single equilibrium point, as opposed to merely accumulating on a connected set of equilibria.
\end{enumerate}
Step (a) in \cite{YoshizawaHelmkeStarkov2001} is proved by a single-variable reduction: setting $L:=A^{1/2}XD^{1/2}$, the cost function becomes $F(L)=\tfrac14\|LL^\top\|_F^2-\tfrac\gamma2\|L\|_F^2$ for a constant $\gamma>0$, and a singular value decomposition of the single matrix $L$ reduces coercivity to coercivity of the scalar quartic $\sigma\mapsto\tfrac14\sigma^4-\tfrac\gamma2\sigma^2$ in each singular value \cite[Lemma~1]{YoshizawaHelmkeStarkov2001}. Step (b) is a direct application of the abstract theorem of {\L}ojasiewicz on real-analytic gradient flows, refined by Absil, Mahony and Andrews \cite{AbsilMahonyAndrews2005} to give explicit convergence-rate estimates once the limit point is a nondegenerate (or more generally, {\L}ojasiewicz-exponent-controlled) critical point.

\subsection{Monotonicity and a first (insufficient) conservation law}

Along any solution of the Monarch gradient flow, $\tfrac{d}{dt}f_A(L(t),R(t)) = -\|\nabla f_A\|_F^2\le0$, so $f_A$ is a Lyapunov function and $M(t):=\Phi(L(t),R(t))$ satisfies $\|A-M(t)\|_F\le\|A-M(0)\|_F$ for all $t$; in particular $M(t)$ remains bounded (Cauchy--Schwarz against $A$). The residual gauge symmetry $(L,R)\mapsto(cL,c^{-1}R)$, $c\in\R^\times$, under which $f_A\circ\Phi$ is invariant, yields by directly differentiating the identity $f_A(cL,c^{-1}R)\equiv f_A(L,R)$ at $c=1$ the Euler-type relation $\langle\nabla f_A(L,R),(L,-R)\rangle\equiv0$, from which
\begin{equation}
\label{eq:balance}
\frac{d}{dt}\Big(\|L(t)\|_F^2-\|R(t)\|_F^2\Big) = 2\langle L,\dot L\rangle-2\langle R,\dot R\rangle = -2\langle(L,-R),\nabla f_A\rangle = 0
\end{equation}
is a conserved quantity of the flow, the Monarch analogue of the balance identities appearing in the analysis of deep linear network gradient flows \cite{AroraCohenHazan2018,BahRauhutTerstiegeWestdickenberg2022}. This shows the flow cannot escape to infinity purely along the scalar gauge direction, but -- as we now show -- it says nothing about the many \emph{other} escape directions made available by the bilinearity of $\Phi$.

\subsection{Failure of coercivity: an explicit flat direction}
\label{sec:flat-direction}

Unlike the single-variable reduction of \cite[Lemma~1]{YoshizawaHelmkeStarkov2001}, $f_A$ is a function of the \emph{pair} $(L,R)\in B\times B$ through the bilinear map $\Phi(L,R)=PLPR$, and no analogous single-variable reduction is available. We show this is not a technical gap but a genuine failure of coercivity.

\begin{proposition}[Flat escape direction]
\label{prop:flat}
Let $n=2$, $\lambda=(1,1)$ (so $B=\R\oplus\R$, $r=2$), and let $P=\begin{psmallmatrix}0&1\\1&0\end{psmallmatrix}$ be the (Monarch) transposition permutation. Let $E_1=\diag(1,0)\in B$. Then
\[
\Phi(tE_1,tE_1) \equiv 0 \qquad \text{for every } t\in\R,
\]
so $f_A(tE_1,tE_1)\equiv\|A\|_F^2$ is constant along the unbounded line $t\mapsto(tE_1,tE_1)$.
\end{proposition}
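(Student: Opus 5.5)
The claim follows by direct computation with the $2\times2$ formulas already worked out in \Cref{ex:swap-kernel}. There, for $L=\diag(l_1,l_2)$ and $R=\diag(r_1,r_2)$, we found $PLP=\diag(l_2,l_1)$ and
\[
\Phi(L,R)=PLPR=\diag(l_2r_1,\;l_1r_2).
\]
We specialize to $L=R=tE_1$, that is, $l_1=r_1=t$ and $l_2=r_2=0$. Both diagonal entries then contain a vanishing factor, since $l_2r_1=0\cdot t$ and $l_1r_2=t\cdot 0$. Hence $\Phi(tE_1,tE_1)=0$ for every real $t$.

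The underlying mechanism is worth stating separately, because it is what generalizes in \Cref{prop:flat-general}. The swap $P$ conjugates $E_1$ to $E_2=\diag(0,1)$, so $P(tE_1)P(tE_1)=t^2E_2E_1=0$. The left factor, after conjugation by $P$, is supported on block $2$, while the right factor is supported on block $1$. Their product vanishes regardless of scale.

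From this, $f_A(tE_1,tE_1)=\|A-0\|_F^2=\|A\|_F^2$, which is independent of $t$. The curve $t\mapsto(tE_1,tE_1)$ is an unbounded line in $B\times B$, since $\|(tE_1,tE_1)\|=\sqrt2\,|t|\to\infty$. Consequently the sublevel set $\{f_A\le\|A\|_F^2\}$ contains this whole line and is not compact, so coercivity in the sense of step (a) of \Cref{sec:classical-template} fails.

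There is no real obstacle in this argument. The only point needing care is that the line passes through non-invertible pairs, so it leaves $\GL(B)\times\GL(B)$. The statement, however, is about $f_A$ on $B\times B$, which is where the gradient flow lives, so this is harmless. One might also note that the conserved quantity \eqref{eq:balance} takes the value $\|L\|_F^2-\|R\|_F^2=0$ along this line, so balance does not exclude it.
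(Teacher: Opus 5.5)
Your proof is correct and is essentially the paper's argument: the paper multiplies out $PL$, then $PLP=\diag(0,t)$, then $PLPR=\diag(0,t)\diag(t,0)=0$, which is the same as your observation $P(tE_1)P=tE_2$ with $E_2E_1=0$ (you read it off the diagonal formula $\Phi(L,R)=\diag(l_2r_1,l_1r_2)$ of \Cref{ex:swap-kernel}). Your extra remarks are accurate but not needed for the statement itself: the line leaves $\GL(B)\times\GL(B)$ while the flow lives on $B\times B$, and the balance quantity in \eqref{eq:balance} vanishes along it.
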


\begin{proof}
Write $L=R=tE_1=\diag(t,0)$. Direct computation gives $PL=\begin{psmallmatrix}0&0\\t&0\end{psmallmatrix}$, $PLP=\begin{psmallmatrix}0&0\\0&t\end{psmallmatrix}$, and hence $PLPR = \begin{psmallmatrix}0&0\\0&t\end{psmallmatrix}\diag(t,0)=\begin{psmallmatrix}0&0\\0&0\end{psmallmatrix}=0$ for every $t$.
\end{proof}

Note $P$ here is the swap on $\lambda=(1,1)$ ($r=2,m=1$), whose transfer matrix $a=\begin{psmallmatrix}0&1\\1&0\end{psmallmatrix}$ achieves the minimum $\sum a_{ij}^2=2$ of \Cref{thm:monarch-schur} (with $q=0,s=1$), so $P$ \emph{is} a Monarch permutation: the pathology of \Cref{prop:flat} is not an artifact of a degenerate choice of $P$, but occurs already at the smallest nontrivial Monarch instance.

\subsection{A general combinatorial criterion for flat directions}

\begin{proposition}[General flat directions]
\label{prop:flat-general}
Suppose $S,T\subseteq\{1,\dots,r\}$ are nonempty subsets such that $Q(P)$ has no arrow $k\to j$ with $k\in T$, $j\in S$ (equivalently $a_{kj}=0$ for all $k\in T,j\in S$). Then for any choice of nonzero blocks $L_j$ ($j\in S$), $R_k$ ($k\in T$), and all other blocks of $L,R$ set to zero, $\Phi(L,R)=PLPR=0$ identically.
\end{proposition}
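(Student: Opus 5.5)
The plan is to show directly that the middle factor already vanishes, $LPR=0$; then $PLPR=P\cdot 0=0$. No limiting or analytic argument is needed, since for fixed supports the claim is a purely combinatorial statement about where $P$ sends coordinate vectors.

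First I will encode the support hypotheses as projector identities. Let $E_S:=\sum_{j\in S}E_j$ and $E_T:=\sum_{k\in T}E_k$, where $E_i\in B$ is the coordinate projector onto $V_i$ from \Cref{sec:quiver}. Since $L$ is block-diagonal with nonzero blocks only in $S$, we have $L=LE_S$. Since $R$ is block-diagonal with nonzero blocks only in $T$, we have $R=E_TR$. Hence
\[
LPR \;=\; L\,(E_SPE_T)\,R \;=\; \sum_{j\in S,\,k\in T} L\,(E_jPE_k)\,R \;=\; \sum_{j\in S,\,k\in T} L\,P_{jk}\,R,
\]
where $P_{jk}=E_jPE_k:V_k\to V_j$ as in \Cref{def:quiver}.

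Next I will translate the hypothesis into vanishing of these blocks. By \Cref{def:transfer}, $\rank(P_{jk})=\rank(E_jPE_k)=a_{kj}$, the number of indices of $B_k$ sent into $B_j$. Equivalently, \Cref{def:quiver} has an arrow $k\to j$ exactly when $P_{jk}\neq0$. The assumption that there is no arrow from $T$ to $S$, i.e.\ $a_{kj}=0$ for all $k\in T$ and $j\in S$, therefore says $P_{jk}=0$ for every term of the sum. So $E_SPE_T=0$, hence $LPR=0$, and $\Phi(L,R)=PLPR=0$ for every choice of the blocks $L_j$ ($j\in S$) and $R_k$ ($k\in T$). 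In particular, $f_A(tL,tR)=\|A\|_F^2$ along every such ray.

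The only point requiring care is the index bookkeeping. One must make sure the convention $a_{kj}=|B_k\cap P^{-1}(B_j)|$ matches the block $E_jPE_k$, and not its transpose. I will check this on \Cref{prop:flat}: there $S=T=\{1\}$ and $a_{11}=0$, so the criterion applies, and it reproduces $\Phi(tE_1,tE_1)=0$. That check fixes the orientation of the arrows. Once it is settled, the argument is purely formal.
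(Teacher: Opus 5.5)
Your proof is correct and is essentially the paper's argument: the paper checks that each $(j,k)$ block $L_jP_{jk}R_k$ of $LPR$ vanishes, and your identity $LPR=L\,(E_SPE_T)\,R=0$ is the same computation written with projectors. One minor point: the arrow orientation is already settled by reading $\rank(E_jPE_k)=a_{kj}$ directly off \Cref{def:transfer}, and your sanity check against \Cref{prop:flat} could not have caught a transposition error anyway, because there $S=T=\{1\}$.
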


\begin{proof}
The $(j,k)$ block-wise vanishing condition for $PLPR=0$ (in the sense of \Cref{sec:quiver}) reads $L_jP_{jk}R_k=0$ for every arrow $k\to j$ of $Q(P)$ present in $P$. If $j\notin S$ then $L_j=0$ and the identity holds trivially; if $k\notin T$ then $R_k=0$ and it again holds trivially. The remaining case $j\in S,k\in T$ is excluded by hypothesis, so no arrow of $Q(P)$ constrains this choice, and $L_jP_{jk}R_k=0$ holds vacuously for all such $(j,k)$.
\end{proof}

Taking $S=\{i\}$ and $T=\{k : a_{ki}=0\}$ (the set of blocks with no arrow into $i$) shows that a flat direction of the type of \Cref{prop:flat} exists as soon as some block $i$ fails to receive an incoming arrow from \emph{every} other block, i.e.\ as soon as $a$ has a zero entry.

\begin{remark}
In the classical square-block case ($n=m^2$, $r=m$), the minimizing transfer matrices have $a_{ij}\equiv1$ for every pair $i,j$ -- a complete transfer graph with no zero entries at all -- so \Cref{prop:flat-general} gives no flat direction for these permutations. The example \Cref{prop:flat} itself sits at a different partition: there $\lambda=(1,1)$ and $m=1<r=2$, so the minimizing transfer matrix (\Cref{thm:monarch-schur} with $q=0,s=1$) is the swap matrix $a=\begin{psmallmatrix}0&1\\1&0\end{psmallmatrix}$, which does have zero entries. Flat directions of the type of \Cref{prop:flat-general} therefore occur for Monarch permutations with $m<r$, where \Cref{lem:balanced-row} forces the minimizing transfer matrix to have zero entries, but not, by this argument, for the square-block ($r=m$) minimizers. \Cref{sec:tikhonov} below establishes coercivity and global convergence for the regularized flow unconditionally, for every $P$, independently of this distinction.
\end{remark}

\subsection{Non-closedness of the Monarch image: a border-rank phenomenon}
\label{sec:border}

The obstruction of \Cref{prop:flat,prop:flat-general} shows that $f_A$ fails to be coercive on $B\times B$. It is natural to ask whether this traces to a \emph{border-rank}-type phenomenon in the sense of algebraic complexity and tensor decomposition \cite{Landsberg2012}: the image $\Phi(B\times B)=\{PLPR : L,R\in B\}$ is a constructible set (the image of a polynomial map, hence Zariski constructible by Chevalley's theorem), and constructible sets that are images of bilinear (rather than linear) maps are, in general, not Zariski closed -- exactly as the variety of matrices of rank $\le r$ is Zariski closed (cut out by minors) while the corresponding statement fails for many higher-order tensor rank varieties. We do not establish this for the Monarch image specifically: \Cref{prop:flat} exhibits $\Phi(tE_1,tE_1)\equiv0$ for all $t$, an unbounded path in $(L,R)$ with \emph{constant} image, which demonstrates non-coercivity but not non-closedness, since $0=\Phi(0,0)$ already lies in $\Phi(B\times B)$ -- the path does not approach any point outside the image. Whether $\Phi(B\times B)$ is actually non-closed for some Monarch permutation $P$ (as opposed to merely failing to be coercive) is a separate question that we leave open; the coercivity failure of \Cref{prop:flat,prop:flat-general} is, on its own, already sufficient to motivate the Tikhonov regularization of \Cref{sec:tikhonov}.

\begin{definition}
\label{def:v0}
Let $\mM:=\overline{\Phi(B\times B)}$ (Euclidean closure of the Monarch image) and $v_0:=\mathrm{dist}(A,\mM)^2=\min_{Y\in\mM}\|A-Y\|_F^2$ (attained since $\mM$ is closed). Note $\mM=\Phi(B\times B)$ whenever the latter is already closed, in which case $v_0=\inf_{(L,R)}f_A(L,R)$ is attained by some finite $(L,R)$; \Cref{thm:mu-limit} below is stated in a form that covers both this benign case and the (unestablished, but not excluded) case $\mM\supsetneq\Phi(B\times B)$.
\end{definition}

\begin{remark}
We emphasize that \Cref{prop:flat,prop:flat-general} rule out coercivity of $f_A$ on $B\times B$ but do not contradict \Cref{thm:nondeg}: the latter concerns nondegeneracy of critical points \emph{restricted to} $\mM_P$ (a manifold, locally), a strictly local and generic statement, whereas coercivity is a global statement about the full sublevel set in the ambient parameter space $B\times B$. The two are logically independent; \Cref{sec:tikhonov} shows how to restore the missing global ingredient without touching the local theory of \Cref{sec:hessian,sec:sard,sec:focal}.
\end{remark}

\section{Tikhonov regularization and global convergence}
\label{sec:tikhonov}

We now show that a quartic Tikhonov penalty restores coercivity exactly, yields unconditional global convergence of the associated gradient flow via the classical template of \Cref{sec:classical-template}, and that the regularized problem converges, as the regularization parameter vanishes, to the original problem in a precise and controlled sense.

\subsection{The regularized cost function}

For $\mu>0$, define
\begin{equation}
\label{eq:regularized-cost}
f_A^\mu(L,R) \;:=\; \|A-PLPR\|_F^2 \;+\; \mu\big(\|L\|_F^4+\|R\|_F^4\big), \qquad (L,R)\in B\times B.
\end{equation}

\begin{proposition}[Coercivity]
\label{prop:coercive}
$f_A^\mu$ is nonnegative, real analytic (indeed polynomial of degree $4$) on $B\times B$, and has compact sublevel sets for every $\mu>0$.
\end{proposition}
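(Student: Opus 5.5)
The three assertions of \Cref{prop:coercive} will be handled in order, and only the third needs any work.

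\emph{Nonnegativity.} Both summands of \eqref{eq:regularized-cost} are nonnegative, since $\mu>0$.

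\emph{Polynomiality.} The map $\Phi(L,R)=PLPR$ is bilinear in the entries of $(L,R)$. Hence $\|A-PLPR\|_F^2$ is a polynomial of degree at most $4$; its top-degree part is $\|PLPR\|_F^2$, which is quartic. The penalty $\|L\|_F^4+\|R\|_F^4=(\sum L_{kl}^2)^2+(\sum R_{kl}^2)^2$ is a homogeneous quartic. So $f_A^\mu$ is a polynomial of degree $4$, hence real analytic on the finite-dimensional space $B\times B$.

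\emph{Compact sublevel sets.} Continuity makes every sublevel set $\{f_A^\mu\le c\}$ closed, so it suffices to show it is bounded. I would not try to control the bilinear residual term at all, since that term is exactly what fails to be coercive (\Cref{prop:flat,prop:flat-general}). Instead I discard it using $\|A-PLPR\|_F^2\ge0$, which gives
\[
f_A^\mu(L,R)\;\ge\;\mu\big(\|L\|_F^4+\|R\|_F^4\big).
\]
On the sublevel set this yields $\|L\|_F^4\le c/\mu$ and $\|R\|_F^4\le c/\mu$. The set therefore lies in the ball of radius $2^{1/2}(c/\mu)^{1/4}$ in $B\times B$, and compactness follows from Heine--Borel.

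The only point worth flagging as an obstacle is conceptual rather than technical. One should resist any lower bound that uses the structure of $\Phi$. The penalty alone dominates because it is a positive-definite quartic form, so the bound holds uniformly for every $\mu>0$ and every permutation $P$, with no dependence on the transfer matrix $a(P,\lambda)$. The same argument would work with a quadratic penalty, consistent with the remark in the abstract.
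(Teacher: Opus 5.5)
Your proof is correct and follows the paper's argument: discard the nonnegative residual $\|A-PLPR\|_F^2$ and let the positive-definite quartic penalty bound the sublevel set, with closedness from continuity. The only difference is cosmetic: the paper uses $x^2+y^2\ge(x+y)^2/2$ to get the single bound $f_A^\mu\ge\mu\rho^4/2$ with $\rho^2=\|L\|_F^2+\|R\|_F^2$, whereas you bound $\|L\|_F$ and $\|R\|_F$ separately, which gives the same conclusion.
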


\begin{proof}
Nonnegativity and analyticity are immediate. Let $\rho^2:=\|L\|_F^2+\|R\|_F^2$. By the power-mean (or elementary convexity) inequality applied to $x=\|L\|_F^2,y=\|R\|_F^2\ge0$,
\[
\|L\|_F^4+\|R\|_F^4 = x^2+y^2 \ge \frac{(x+y)^2}{2} = \frac{\rho^4}{2}.
\]
Hence $f_A^\mu(L,R)\ge\mu\rho^4/2\to\infty$ as $\rho\to\infty$, uniformly in the direction of $(L,R)$. Since $f_A^\mu$ is continuous, every sublevel set $\{f_A^\mu\le c\}$ is closed and, by the above, bounded, hence compact in the finite-dimensional space $B\times B$.
\end{proof}

Unlike the single-variable SVD reduction of \cite[Lemma~1]{YoshizawaHelmkeStarkov2001}, the proof of \Cref{prop:coercive} requires no spectral computation: the quartic penalty dominates uniformly in \emph{all} directions of $B\times B$ simultaneously, including the flat directions of \Cref{prop:flat,prop:flat-general}. Indeed, along the flat line of \Cref{prop:flat}, $f_A^\mu(tE_1,tE_1)=\|A\|_F^2+2\mu t^4\to\infty$.

\subsection{Global convergence theorem}

\begin{theorem}[Regularized global convergence]
\label{thm:tikhonov-converge}
For every $\mu>0$ and every initial condition $(L_0,R_0)\in B\times B$, the negative gradient flow
\[
\dot L = -\nabla_Lf_A^\mu, \qquad \dot R = -\nabla_Rf_A^\mu
\]
has a solution existing for all $t\ge0$, and $(L(t),R(t))$ converges, as $t\to\infty$, to a single critical point $(L_\infty^\mu,R_\infty^\mu)$ of $f_A^\mu$.
\end{theorem}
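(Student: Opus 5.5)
The plan follows the two-step template of \Cref{sec:classical-template}: coercivity comes from \Cref{prop:coercive}, and single-point convergence from the {\L}ojasiewicz gradient inequality.

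\emph{Step 1 (global existence and boundedness).} The vector field $-\nabla f_A^\mu$ is polynomial, hence locally Lipschitz, so a unique maximal solution exists on some interval $[0,t^*)$. Along it,
\[
\tfrac{d}{dt}f_A^\mu(L(t),R(t))=-\|\nabla f_A^\mu\|_F^2\le0,
\]
so the trajectory stays in the sublevel set $\{f_A^\mu\le f_A^\mu(L_0,R_0)\}$. By \Cref{prop:coercive} this set is compact. A maximal solution that remains in a compact set cannot blow up, so $t^*=\infty$.

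\emph{Step 2 ($\omega$-limit set).} Since the trajectory is bounded, its $\omega$-limit set $\Omega$ is nonempty, compact and connected. Integrating the energy identity gives
\[
\int_0^\infty\|\nabla f_A^\mu\|^2\,dt\le f_A^\mu(L_0,R_0)<\infty .
\]
The gradient is uniformly continuous on the compact sublevel set and the trajectory has bounded speed there, so the standard argument (Barbalat, or LaSalle invariance) shows $\Omega\subseteq\Crit(f_A^\mu)$. Because $f_A^\mu(L(t),R(t))$ is monotone and bounded below, $f_A^\mu$ is constant on $\Omega$, say equal to $f_\infty$.

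\emph{Step 3 (single limit point).} This is the main obstacle, since critical points may form continua; this is exactly the gap the {\L}ojasiewicz inequality closes. Because $f_A^\mu$ is a polynomial, it is real analytic. For each $x\in\Omega$ the {\L}ojasiewicz inequality \cite{Lojasiewicz1963} gives a neighborhood $U_x$, a constant $c_x>0$ and an exponent $\theta_x\in(0,\tfrac12]$ with
\[
|f_A^\mu-f_\infty|^{1-\theta_x}\le c_x\|\nabla f_A^\mu\| \quad\text{on } U_x .
\]
Compactness of $\Omega$ yields a finite subcover, and hence a uniform neighborhood $U$ of $\Omega$ with uniform constants $c,\theta$. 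The trajectory eventually enters $U$ and stays there. On $U$ we have
\[
-\tfrac{d}{dt}\big(f_A^\mu-f_\infty\big)^\theta=\theta\big(f_A^\mu-f_\infty\big)^{\theta-1}\|\nabla f_A^\mu\|^2\ge\tfrac{\theta}{c}\,\|\dot{(L,R)}\| ,
\]
so the arc length $\int_T^\infty\|\dot{(L,R)}\|\,dt$ is finite. (In the degenerate case where $f_A^\mu(L(t),R(t))=f_\infty$ at some finite time, the point is already critical and the trajectory is stationary from then on.) A trajectory of finite length is Cauchy, so it converges to a single point $(L_\infty^\mu,R_\infty^\mu)\in\Omega$. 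That point is critical by Step 2.

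A possible refinement would invoke \cite{AbsilMahonyAndrews2005} for convergence rates, but the argument above is all the theorem needs.
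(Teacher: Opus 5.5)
Your proposal is correct and follows essentially the same route as the paper. Compact sublevel sets from \Cref{prop:coercive} give global existence and a bounded trajectory, and the {\L}ojasiewicz theorem for real-analytic gradient flows, together with LaSalle, gives convergence to a single critical point. The only difference is that you prove the {\L}ojasiewicz finite-arc-length step explicitly (uniform constants on a neighborhood of the $\omega$-limit set, plus the finite-time degenerate case), whereas the paper simply cites that theorem.
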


\begin{proof}
By \Cref{prop:coercive}, the sublevel set $K:=\{f_A^\mu\le f_A^\mu(L_0,R_0)\}$ is compact. Since
\[
\frac{d}{dt}f_A^\mu(L(t),R(t)) = -\|\nabla f_A^\mu\|_F^2 \le 0
\]
along any solution, the trajectory remains in $K$ for as long as it exists, and standard ODE theory (no finite-time blow-up, since the vector field is bounded on the compact forward-invariant set $K$) gives existence for all $t\ge0$. Since $f_A^\mu$ is a real-analytic function on $B\times B$ (as a polynomial), the theorem of {\L}ojasiewicz on trajectories of real-analytic gradient vector fields \cite{Lojasiewicz1963,Lojasiewicz1983} applies verbatim to the bounded trajectory $(L(t),R(t))\subset K$: its $\omega$-limit set is a single point $(L_\infty^\mu,R_\infty^\mu)$, which is necessarily a critical point of $f_A^\mu$ (by LaSalle's invariance principle, as in the proof strategy of \cite[Theorem~1]{YoshizawaHelmkeStarkov2001}).
\end{proof}

\begin{remark}
When $(L_\infty^\mu,R_\infty^\mu)$ is additionally a nondegenerate critical point of $f_A^\mu$ (in the sense of \Cref{sec:hessian}, adapted to $f_A^\mu$; the regularization contributes a positive-semidefinite term to the Hessian, see \Cref{sec:hessian-effect} below), the refined {\L}ojasiewicz--Absil--Mahony--Andrews theory \cite{AbsilMahonyAndrews2005} upgrades \Cref{thm:tikhonov-converge} to an explicit exponential (or, for degenerate but {\L}ojasiewicz-exponent-controlled points, algebraic) convergence rate, exactly paralleling the local rate statement obtained from the spectrum of $C(A)$ in \Cref{prop:sylvester}.
\end{remark}

\subsection{The vanishing-regularization limit}
\label{sec:mu-limit}

We now analyze the behavior of global minimizers of $f_A^\mu$ as $\mu\to0^+$, showing that the regularization path is consistent with the original (possibly non-coercive) problem in a precise sense. The theorem is stated so as to cover both the case $\mM=\Phi(B\times B)$ and the a priori possible (but, per \Cref{sec:border}, unestablished) case $\mM\supsetneq\Phi(B\times B)$.

Let $(L^\mu,R^\mu)\in\argmin_{B\times B}f_A^\mu$ (exists by \Cref{prop:coercive}), and recall $v_0=\mathrm{dist}(A,\mM)^2$ from \Cref{def:v0}.

\begin{theorem}[Consistency of the regularization path]
\label{thm:mu-limit}
\hfill
\begin{enumerate}[label=(\alph*)]
\item $f_A^\mu(L^\mu,R^\mu)\to v_0$ as $\mu\to0^+$.
\item Along any sequence $\mu_k\to0^+$, a subsequence of $\Phi(L^{\mu_k},R^{\mu_k})$ converges in $\mM$ to a point $Y_*$ with $\|A-Y_*\|_F^2=v_0$.
\item If, moreover, $v_0$ is attained by some \emph{finite} $(L_0,R_0)\in B\times B$ with $\Phi(L_0,R_0)=Y_*$, then $(L^\mu,R^\mu)$ is bounded as $\mu\to0^+$, uniformly in $\mu$; every subsequential limit $(L_*,R_*)$ satisfies $\Phi(L_*,R_*)=Y_*'$ for some minimizer $Y_*'$ of $\|A-\cdot\|_F^2$ on $\mM$, and
\[
\|L_*\|_F^4+\|R_*\|_F^4 \;\le\; \inf\Big\{\|L_0\|_F^4+\|R_0\|_F^4 \;:\; \Phi(L_0,R_0)=Y_*',\ \|A-Y_*'\|_F^2=v_0\Big\}.
\]
\end{enumerate}
\end{theorem}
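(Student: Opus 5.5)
Write $v^\mu:=f_A^\mu(L^\mu,R^\mu)$ and $g(L,R):=\|L\|_F^4+\|R\|_F^4$. The argument runs on two-sided bounds for $v^\mu$, followed by compactness.

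\emph{Part (a).} For the lower bound, $\Phi(L^\mu,R^\mu)\in\Phi(B\times B)\subseteq\mM$. Hence $v^\mu\ge\|A-\Phi(L^\mu,R^\mu)\|_F^2\ge v_0$.

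For the upper bound, fix $\varepsilon>0$. By \Cref{def:v0} and the definition of closure, there is a finite pair $(L_\varepsilon,R_\varepsilon)$ with $\|A-\Phi(L_\varepsilon,R_\varepsilon)\|_F^2\le v_0+\varepsilon$. Minimality of $(L^\mu,R^\mu)$ then gives
\[
v^\mu\le v_0+\varepsilon+\mu\, g(L_\varepsilon,R_\varepsilon),
\]
so $\limsup_{\mu\to0^+}v^\mu\le v_0+\varepsilon$. Since $\varepsilon$ is arbitrary, (a) follows.

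\emph{Part (b).} The images $Y^\mu:=\Phi(L^\mu,R^\mu)$ satisfy $\|A-Y^\mu\|_F^2\le v^\mu$, and $v^\mu$ is bounded for small $\mu$ by (a). So the $Y^{\mu_k}$ lie in a bounded set, and Bolzano--Weierstrass gives a convergent subsequence with limit $Y_*$. This limit lies in $\mM$ because $\mM$ is closed. Passing to the limit,
\[
v_0\le\|A-Y_*\|_F^2=\lim\|A-Y^{\mu_k}\|_F^2\le\lim v^{\mu_k}=v_0 .
\]

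\emph{Part (c).} This is the main step, and the comparison must be done carefully. Let $(L_0,R_0)$ be any finite pair with $f_A(L_0,R_0)=v_0$. Minimality gives
\[
v_0+\mu\, g(L^\mu,R^\mu)\le f_A(L^\mu,R^\mu)+\mu\, g(L^\mu,R^\mu)\le v_0+\mu\, g(L_0,R_0),
\]
where the first inequality uses $f_A\ge v_0$ on $B\times B$. Dividing by $\mu$ yields $g(L^\mu,R^\mu)\le g(L_0,R_0)$ for every $\mu$. By the power-mean inequality of \Cref{prop:coercive}, this bounds $\rho^4/2$, so $(L^\mu,R^\mu)$ is bounded uniformly in $\mu$.

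Now take a subsequential limit $(L_*,R_*)$. Continuity of $\Phi$ and part (a) give $f_A(L_*,R_*)=\lim f_A(L^{\mu_k},R^{\mu_k})=v_0$, using that $f_A\le v^\mu\to v_0$. Therefore $Y_*':=\Phi(L_*,R_*)$ is a minimizer on $\mM$. Continuity of $g$ gives $g(L_*,R_*)\le g(L_0,R_0)$.

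The subtle point is that the comparison pair $(L_0,R_0)$ above is an arbitrary exact minimizer, not one tied to $Y_*'$. So the bound actually holds against the infimum over all finite representations of all best approximations. This is at least as strong as the displayed infimum restricted to representations of $Y_*'$, since that restricted set is a subset and its infimum is larger or equal. Taking the infimum over admissible $(L_0,R_0)$ concludes the proof.
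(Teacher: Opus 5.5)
Your proof is correct and follows the paper's argument essentially step for step. You use the same two competitor bounds for (a), Bolzano--Weierstrass with closedness of $\mM$ for (b), and the divide-by-$\mu$ comparison $g(L^\mu,R^\mu)\le g(L_0,R_0)$ for (c). Your remark that the bound holds against the infimum over all exact finite minimizers is what the paper's final ``take the infimum'' step implicitly relies on; you have simply made it explicit.
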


\begin{proof}
\emph{(a).} \emph{Upper bound.} Since $\Phi(B\times B)$ is dense in $\mM$ (\Cref{def:v0}), for every $\varepsilon>0$ there is $(L_\varepsilon,R_\varepsilon)\in B\times B$ with $\|A-\Phi(L_\varepsilon,R_\varepsilon)\|_F^2\le v_0+\varepsilon$; using $(L_\varepsilon,R_\varepsilon)$ as a competitor for the minimization defining $(L^\mu,R^\mu)$,
\[
f_A^\mu(L^\mu,R^\mu) \le f_A^\mu(L_\varepsilon,R_\varepsilon) = \|A-\Phi(L_\varepsilon,R_\varepsilon)\|_F^2 + \mu\big(\|L_\varepsilon\|_F^4+\|R_\varepsilon\|_F^4\big) \le v_0+\varepsilon+\mu C_\varepsilon,
\]
where $C_\varepsilon:=\|L_\varepsilon\|_F^4+\|R_\varepsilon\|_F^4$ is independent of $\mu$. Letting $\mu\to0^+$ and then $\varepsilon\to0^+$ gives $\limsup_{\mu\to0^+}f_A^\mu(L^\mu,R^\mu)\le v_0$.
\emph{Lower bound.} For every $\mu>0$, $f_A^\mu(L^\mu,R^\mu)\ge\|A-\Phi(L^\mu,R^\mu)\|_F^2\ge\mathrm{dist}(A,\mM)^2=v_0$ (dropping the nonnegative penalty, and using $\Phi(L^\mu,R^\mu)\in\Phi(B\times B)\subseteq\mM$). Combining the two bounds gives (a).

\emph{(b).} From (a) and the lower bound in its proof, $\|A-\Phi(L^\mu,R^\mu)\|_F^2\le f_A^\mu(L^\mu,R^\mu)\to v_0$, while always $\ge v_0$; hence $\|A-\Phi(L^\mu,R^\mu)\|_F^2\to v_0$. In particular $\Phi(L^\mu,R^\mu)$ remains in a bounded region (a ball around $A$ of radius $\sqrt{v_0}+1$ eventually), so any sequence $\mu_k\to0^+$ has a subsequence along which $\Phi(L^{\mu_k},R^{\mu_k})\to Y_*$ for some $Y_*\in\R^{n\times n}$; since each $\Phi(L^{\mu_k},R^{\mu_k})\in\Phi(B\times B)\subseteq\mM$ and $\mM$ is closed, $Y_*\in\mM$, and $\|A-Y_*\|_F^2=\lim\|A-\Phi(L^{\mu_k},R^{\mu_k})\|_F^2=v_0$.

\emph{(c).} Using $(L_0,R_0)$ as a competitor as in (a),
\[
f_A^\mu(L^\mu,R^\mu) \le f_A^\mu(L_0,R_0) = v_0 + \mu\big(\|L_0\|_F^4+\|R_0\|_F^4\big).
\]
On the other hand, since $\|A-\Phi(L^\mu,R^\mu)\|_F^2\ge v_0$ always,
\[
f_A^\mu(L^\mu,R^\mu) = \|A-\Phi(L^\mu,R^\mu)\|_F^2 + \mu\big(\|L^\mu\|_F^4+\|R^\mu\|_F^4\big) \ge v_0 + \mu\big(\|L^\mu\|_F^4+\|R^\mu\|_F^4\big).
\]
Combining and dividing by $\mu>0$,
\[
\|L^\mu\|_F^4+\|R^\mu\|_F^4 \;\le\; \|L_0\|_F^4+\|R_0\|_F^4 \qquad \text{for every } \mu>0,
\]
a uniform (i.e.\ $\mu$-independent) bound, proving boundedness of $(L^\mu,R^\mu)$. Any subsequential limit $(L_*,R_*)$ then satisfies, by continuity of $\Phi$ and of $\|\cdot\|_F^4$, $\|A-\Phi(L_*,R_*)\|_F^2=v_0$ (from part (b), since $\Phi(L_*,R_*)$ is a subsequential limit of $\Phi(L^\mu,R^\mu)$, hence equals some minimizer $Y_*'$ found there) and $\|L_*\|_F^4+\|R_*\|_F^4\le\|L_0\|_F^4+\|R_0\|_F^4$. Taking the infimum over all admissible finite representations $(L_0,R_0)$ of $Y_*'$ at the minimal residual gives the stated inequality.
\end{proof}

\subsection{Two scenarios}

\Cref{thm:mu-limit} distinguishes two scenarios, according to whether $v_0$ happens to be attained by a finite factorization:

\begin{itemize}
\item \textbf{Benign case.} If $v_0$ is attained by a finite Monarch factorization $(L_0,R_0)$, \Cref{thm:mu-limit}(c) shows the regularization path $(L^\mu,R^\mu)$ remains bounded and converges (subsequentially) to a representative minimizing $\|L\|_F^4+\|R\|_F^4$ among all finite factorizations of a best approximation $Y_*'$ -- the Monarch analogue of the minimum-norm solution selection classically associated with Tikhonov regularization of linear inverse problems.
\item \textbf{Possible border case.} If instead $v_0$ is attained only by points of $\mM\setminus\Phi(B\times B)$ -- which requires $\mM\supsetneq\Phi(B\times B)$, a genuine non-closedness of the Monarch image that \Cref{sec:border} does not establish but also does not rule out -- then the hypothesis of \Cref{thm:mu-limit}(c) fails, and $(L^\mu,R^\mu)$ is necessarily unbounded as $\mu\to0^+$ (else a subsequential limit would give a finite representation of $Y_*$, contradiction); nonetheless \Cref{thm:mu-limit}(a)--(b) guarantee that the \emph{image} $\Phi(L^\mu,R^\mu)$ still converges to a bona fide best approximation $Y_*\in\mM$, so the regularization path would remain a numerically meaningful approximation scheme even if this case does occur.
\end{itemize}

\section{Implications of Tikhonov regularization for the Monarch network structure}
\label{sec:network-implications}

We conclude by examining what the regularization of \Cref{sec:tikhonov} does and does not change about the Monarch parametrization as a network architecture, connecting the geometric theory of \Cref{sec:quiver,sec:schur,sec:hessian} to the regularized problem.

\subsection{Architecture invariance}

The penalty $\mu(\|L\|_F^4+\|R\|_F^4)$ depends only on the Frobenius norms of $L,R\in B$ and not on the support pattern of $B$ (the partition $\lambda$) nor on $P$ (equivalently, on the quiver $Q(P)$ of \Cref{def:quiver}). Consequently, the computational graph of the Monarch layer -- the block sparsity pattern, the $O(n^{3/2})$ multiplication cost for the equal-block case, and the connectivity structure of $Q(P)$ -- is completely unaffected by regularization; \Cref{thm:tikhonov-converge,thm:mu-limit} concern only how the (unconstrained, real) parameters $(L,R)$ within this fixed architecture are selected by gradient descent.

\subsection{Gauge symmetry breaking and balanced minimizers}
\label{sec:gauge-breaking}

Recall from \Cref{sec:schur} that for a Monarch permutation $P$, $\ker D\Phi\cong\R\cdot I$, generating the residual gauge action $(L,R)\mapsto(cL,c^{-1}R)$, $c\in\R^\times$, under which $f_A\circ\Phi$ (hence $f_A$ itself, as a function on the level of $L,R$) is invariant. The quartic penalty is \emph{not} gauge invariant:
\[
\|cL\|_F^4+\|c^{-1}R\|_F^4 = c^4\|L\|_F^4+c^{-4}\|R\|_F^4 =: \psi(c).
\]

\begin{proposition}[Balancing]
\label{prop:balance-min}
For fixed $L,R\ne0$, the function $\psi(c)=c^4\|L\|_F^4+c^{-4}\|R\|_F^4$ on $c\in\R_{>0}$ attains its unique minimum at $c_*=(\|R\|_F/\|L\|_F)^{1/2}$, characterized by the balance condition $\|c_*L\|_F=\|c_*^{-1}R\|_F$.
\end{proposition}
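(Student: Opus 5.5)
This is a one-variable calculus statement, and I will handle it directly. Set $\alpha:=\|L\|_F^4>0$ and $\beta:=\|R\|_F^4>0$, so that $\psi(c)=\alpha c^4+\beta c^{-4}$ on $c>0$. The cleanest route is to substitute $u=c^4$, which is an increasing bijection of $\R_{>0}$ onto itself. This reduces the problem to minimizing $g(u)=\alpha u+\beta/u$. By AM--GM, $g(u)\ge 2\sqrt{\alpha\beta}$, with equality if and only if $\alpha u=\beta/u$, that is, $u=\sqrt{\beta/\alpha}$. Existence and uniqueness of the minimizer both come out of this step at once, and no compactness argument is needed.

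Next I translate back. From $u_*=c_*^4=\sqrt{\beta/\alpha}=\|R\|_F^2/\|L\|_F^2$ we get $c_*=(\|R\|_F/\|L\|_F)^{1/2}$, which is the formula in the statement. Uniqueness on $\R_{>0}$ follows because $c\mapsto c^4$ is injective there.

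For the balance characterization, the equality condition $\alpha u=\beta/u$ reads $c_*^4\|L\|_F^4=c_*^{-4}\|R\|_F^4$. Taking positive fourth roots gives $\|c_*L\|_F=\|c_*^{-1}R\|_F$. Conversely, any $c>0$ satisfying the balance condition satisfies $c^2=\|R\|_F/\|L\|_F$ and is therefore equal to $c_*$, so the balance condition characterizes the minimizer. As a cross-check, one can also compute $\psi'(c)=4\alpha c^3-4\beta c^{-5}$, which vanishes exactly when $c^8=\beta/\alpha$, and note that $\psi\to\infty$ both as $c\to0^+$ and as $c\to\infty$.

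I do not expect any real obstacle. The only point that needs care is the exponent bookkeeping: $c^8=\|R\|_F^4/\|L\|_F^4$ gives $c=(\|R\|_F/\|L\|_F)^{1/2}$, and the statement's square root is consistent with this. The hypothesis $L,R\ne0$ is exactly what guarantees $\alpha,\beta>0$. If $\beta=0$, the infimum would not be attained on $\R_{>0}$.
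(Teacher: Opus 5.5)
Your proof is correct and complete, but its core argument differs from the paper's. The paper uses calculus. It solves $\psi'(c)=0$ to get $c^8=\|R\|_F^4/\|L\|_F^4$, and certifies that this unique critical point is the minimum by noting that $\psi$ is strictly convex in $s=\log c$, since it equals $e^{4s}\|L\|_F^4+e^{-4s}\|R\|_F^4$. It then checks the balance identity by direct substitution, $\|c_*L\|_F^2=\|L\|_F\|R\|_F=\|c_*^{-1}R\|_F^2$.

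You instead substitute $u=c^4$ and use the equality case of AM--GM for $\alpha u+\beta/u$. This gives global minimality, uniqueness, and the balance condition in one step, since balance \emph{is} the AM--GM equality condition $\alpha u=\beta/u$. You never need to pass from a unique critical point with positive curvature to a global minimizer. As a bonus you get the minimum value $\psi(c_*)=2\|L\|_F^2\|R\|_F^2$, which is itself invariant under the gauge action $(L,R)\mapsto(cL,c^{-1}R)$. Your explicit converse (balance forces $c^2=\|R\|_F/\|L\|_F$, hence $c=c_*$) also makes the word ``characterized'' precise; the paper only verifies that $c_*$ is balanced.

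The paper's route has its own advantage. It works in the natural additive coordinate $s=\log c$ on the gauge group, where strict convexity, and hence uniqueness, is immediate for any positive combination of exponentials $e^{ks}$ with $k\ne0$. So it extends verbatim to other penalty exponents, including unequal ones on $L$ and $R$, where your argument would need weighted AM--GM.
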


\begin{proof}
$\psi'(c)=4c^3\|L\|_F^4-4c^{-5}\|R\|_F^4=0 \iff c^8=\|R\|_F^4/\|L\|_F^4 \iff c=(\|R\|_F/\|L\|_F)^{1/2}$ (the unique positive root); $\psi''(c_*)>0$ since $\psi$ is strictly convex in $\log c$ (as a sum of two exponentials $e^{4s}\|L\|_F^4+e^{-4s}\|R\|_F^4$ in $s=\log c$), confirming a minimum. At $c=c_*$, $\|c_*L\|_F^2=c_*^2\|L\|_F^2=\|R\|_F\|L\|_F=c_*^{-2}\|R\|_F^2=\|c_*^{-1}R\|_F^2$.
\end{proof}

Consequently, any minimizer of $f_A^\mu$ automatically satisfies $\|L^\mu\|_F=\|R^\mu\|_F$ (else replacing $(L^\mu,R^\mu)$ by $(c_*L^\mu,c_*^{-1}R^\mu)$ for the unique $c_*>0$ of \Cref{prop:balance-min}, which leaves $\Phi$ and hence the first term of $f_A^\mu$ unchanged while strictly decreasing the penalty, would contradict minimality). This is the regularized counterpart of the conservation law \eqref{eq:balance}: whereas \eqref{eq:balance} only shows the \emph{difference} $\|L\|_F^2-\|R\|_F^2$ is preserved along any single trajectory of the unregularized flow (without selecting a value), the regularized problem \emph{actively selects}, among all gauge-equivalent representations of a given product $M=PLPR$, the balance condition $\|L\|_F=\|R\|_F$ as the unique value compatible with minimality (the pair $(L,R)$ itself is determined only up to the residual sign ambiguity $(L,R)\mapsto(-L,-R)$, which leaves both $\Phi$ and every norm in sight unchanged).

\subsection{Suppression of large single-block norms}

The flat direction of \Cref{prop:flat} exploits a rank-deficient block ($E_1=\diag(1,0)$ has a zero singular value) growing without bound while contributing nothing to $\Phi$. More generally, writing $\sigma_1(L_i)\ge\cdots\ge\sigma_{n_i}(L_i)\ge0$ for the singular values of block $L_i$,
\[
\|L\|_F^4 = \Big(\sum_i\sum_k\sigma_k(L_i)^2\Big)^2 \ge \sum_i\Big(\sum_k\sigma_k(L_i)^2\Big)^2 = \sum_i\|L_i\|_F^4,
\]
so the penalty grows at least quartically in the Frobenius norm of any single block, and by convexity of $x\mapsto x^2$ a fixed total $\sum_i\|L_i\|_F^2$ is penalized least when spread evenly across blocks rather than concentrated in one. This disproportionately penalizes configurations in which a small number of blocks carry large Frobenius norm while others shrink towards zero to compensate (the mechanism underlying \Cref{prop:flat,prop:flat-general}), and correspondingly favors, among representations of a fixed (or near-fixed) product $M$, those in which the overall \emph{norm} of $L,R$ is spread evenly across blocks. We emphasize that this is a statement about the distribution of Frobenius norm \emph{across} blocks, not about the numerical conditioning $\kappa(L_i)=\sigma_{\max}(L_i)/\sigma_{\min}(L_i)$ \emph{within} any individual block: a block can have small Frobenius norm and still be arbitrarily ill-conditioned (e.g.\ one large and one near-zero singular value), and nothing proved here rules this out. The connection to the standard empirical observation that weight decay improves numerical conditioning of factorized layers is therefore only suggestive, not established by this argument; the penalty controls overall block norm, and whether this correlates with (as opposed to merely bounding) individual block conditioning in practice is a separate, empirical question.

\subsection{Effect on the Hessian: removal of the residual flat direction}
\label{sec:hessian-effect}

Write $C^\mu(A):=C(A)+\mu H$ for the Hessian of $f_A^\mu$ at a critical point, where $C(A)=S+T(A)$ is the unregularized Hessian operator of \Cref{sec:hessian} and $H$ is the (positive semidefinite) Hessian of $(L,R)\mapsto\|L\|_F^4+\|R\|_F^4$. Since $\|X\|_F^4=\langle X,X\rangle^2$ has Hessian $4\|X\|_F^2\mathrm{Id}+8\,X\otimes X$ (positive semidefinite; indeed $\nabla\|X\|_F^4=4\|X\|_F^2X$, and differentiating again gives $4\|X\|_F^2\mathrm{Id}+8X\otimes X$), summing the independent contributions of $L$ and $R$ gives
\[
H(\delta L,\delta R)=4\|L\|_F^2\|\delta L\|_F^2+4\|R\|_F^2\|\delta R\|_F^2+8\langle L,\delta L\rangle^2+8\langle R,\delta R\rangle^2\ge0.
\]

\begin{proposition}
\label{prop:gauge-nondegenerate}
At a balanced critical point $(L^\mu,R^\mu)$ with $L^\mu,R^\mu\ne0$, the restriction of $H$ to the scalar gauge direction $\xi=(L^\mu,-R^\mu)$ -- which lies in $\ker D\Phi$ for \emph{every} permutation $P$, since $D\Phi(L,-R)=PLPR-PLPR=0$ identically, regardless of the identification of \Cref{lem:kernel-end} -- is strictly positive: $H(\xi,\xi)>0$.
\end{proposition}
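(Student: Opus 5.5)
The plan is to substitute $\xi=(\delta L,\delta R)=(L^\mu,-R^\mu)$ directly into the explicit formula for $H$ displayed just above the statement, and then use positivity of each resulting term. First, record that $\xi\in\ker D\Phi$: by bilinearity, $D\Phi_{(L,R)}(L,-R)=PLPR-PLPR=0$. This holds for every $P$ and needs nothing from \Cref{lem:kernel-end}. It is only context, since the claim itself concerns $H$ alone.

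Next, evaluate $H(\xi,\xi)$ term by term. Writing $\ell:=\|L^\mu\|_F^2$ and $\rho:=\|R^\mu\|_F^2$, we have $\|\delta L\|_F^2=\ell$ and $\|\delta R\|_F^2=\rho$, together with $\langle L^\mu,\delta L\rangle=\ell$ and $\langle R^\mu,\delta R\rangle=-\rho$. The formula therefore gives
\[
H(\xi,\xi)=4\ell^2+4\rho^2+8\ell^2+8\rho^2=12\big(\|L^\mu\|_F^4+\|R^\mu\|_F^4\big).
\]
The sign of $\delta R$ drops out because it enters only through squares. The hypothesis $L^\mu,R^\mu\ne0$ then gives $H(\xi,\xi)>0$; in fact one nonzero factor already suffices. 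At a balanced point, where $\ell=\rho$ by \Cref{prop:balance-min}, this simplifies to $24\|L^\mu\|_F^4$.

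As a cross-check I would verify the same number intrinsically, from the curve $c\mapsto(cL,c^{-1}R)$. Its velocity at $c=1$ is $\xi$, and the penalty along the curve is $\psi(c)$. One should not simply equate $\psi''(1)$ with $H(\xi,\xi)$, because the curve has nonzero acceleration $(0,2R)$, which contributes a gradient term. Instead I would confirm directly with the straight line $t\mapsto(L+tL,R-tR)$. The penalty along it is $(1+t)^4\ell^2+(1-t)^4\rho^2$, whose second derivative at $t=0$ is $12(\ell^2+\rho^2)$, matching the formula above.

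The only real obstacle is bookkeeping. I need the stated Hessian formula for $\|X\|_F^4$ to be used consistently: its second directional derivative along $X$ itself is $4\|X\|^4+8\|X\|^4=12\|X\|^4$, which agrees with $\frac{d^2}{dt^2}(1+t)^4\|X\|^4\big|_{t=0}$. I also need to make sure the normalization is not confused with the $\tfrac12$ convention used for $C(A)$. Since only positivity is claimed, that constant is harmless.
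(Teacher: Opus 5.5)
Your proposal is correct and is essentially the paper's proof: you substitute $\xi=(L^\mu,-R^\mu)$ into the explicit formula for $H$ and obtain $H(\xi,\xi)=12\big(\|L^\mu\|_F^4+\|R^\mu\|_F^4\big)>0$. Your straight-line cross-check along $t\mapsto(L+tL,R-tR)$ is a correct extra that the claim does not need, and so is your remark that $\psi''(1)$ exceeds $H(\xi,\xi)$ by the acceleration term $8\|R^\mu\|_F^4$.
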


\begin{proof}
$H(\xi,\xi) = 4\|L^\mu\|_F^2\|L^\mu\|_F^2+4\|R^\mu\|_F^2\|R^\mu\|_F^2+8\langle L^\mu,L^\mu\rangle^2+8\langle -R^\mu,-R^\mu\rangle^2 = 12\|L^\mu\|_F^4+12\|R^\mu\|_F^4 > 0$ whenever $(L^\mu,R^\mu)\ne(0,0)$.
\end{proof}

\begin{remark}
For a Monarch permutation $P$, $\dim\ker D\Phi=m^2$ (square-block case) or more generally $\sum_{i,j}a_{ij}^2$, which exceeds $1$ as soon as $r\ge2$; the scalar direction $\xi$ above is only \emph{one} particular element of $\ker D\Phi$, not a spanning direction. \Cref{prop:gauge-nondegenerate} shows the regularization strictly convexifies the unregularized Hessian along this one particular flat direction; whether it also does so along the remaining $\dim\ker D\Phi-1$ directions of the kernel, whose vanishing for the unregularized $T(A)$ depends in general on the cyclic Fourier structure of \Cref{sec:focal} (available only on the special stratum of \Cref{ex:cyclic-focal}), is a separate question that we do not address here.
\end{remark}

\subsection{Relation to weight decay}

In deep learning practice, $\ell_2$ weight decay is applied essentially universally to factorized and structured layers, including Monarch layers \cite{DaoMonarch2022}, typically justified only heuristically as improving generalization or numerical stability. \Cref{prop:coercive,thm:tikhonov-converge,thm:mu-limit,prop:balance-min,prop:gauge-nondegenerate} together give this practice a precise theoretical content in the Monarch setting (with the caveat that our penalty is quartic in the Frobenius norm rather than the more common quadratic $\ell_2$ penalty $\mu(\|L\|_F^2+\|R\|_F^2)$; we emphasize that quartic is not needed for coercivity -- a quadratic penalty already gives $f_A+\mu(\|L\|_F^2+\|R\|_F^2)\ge\mu(\|L\|_F^2+\|R\|_F^2)\to\infty$ unconditionally, by an even simpler argument than \Cref{prop:coercive}, since the residual term $\|A-\Phi(L,R)\|_F^2$ is always nonnegative and can simply be dropped. We adopt the quartic penalty because its degree matches that of $\|\Phi(L,R)\|_F^2$ itself (bidegree $(2,2)$ in $(L,R)$ jointly, i.e.\ total degree $4$), which is what makes the uniform, direction-independent estimate of \Cref{prop:coercive} possible without any case analysis on $P$; we have not verified whether the qualitative conclusions of \Cref{prop:balance-min,prop:gauge-nondegenerate,thm:mu-limit} below change in any material way if a quadratic penalty is used instead, and leave a full comparison of the two penalty degrees as an open direction): it (i) guarantees that every gradient-flow trajectory, from every initial condition and for every target, converges to a single critical point (\Cref{thm:tikhonov-converge}; this is uniqueness of the limit \emph{along each trajectory}, not uniqueness of a global minimizer -- $f_A^\mu$ may have several critical points, and different initial conditions may converge to different ones); (ii) canonically resolves the residual scalar gauge freedom $(L,R)\mapsto(cL,c^{-1}R)$ by selecting the balanced representative (\Cref{prop:balance-min}); (iii) discourages large-Frobenius-norm block factorizations (\Cref{sec:flat-direction} and the discussion following; we do not claim this improves the numerical \emph{conditioning} $\kappa(L_i)=\sigma_{\max}(L_i)/\sigma_{\min}(L_i)$ of individual blocks, only that it controls their overall norm, which may or may not correlate with conditioning in practice); and (iv) removes the flat direction of the unregularized Hessian associated with this scalar gauge freedom at every finite critical point (\Cref{prop:gauge-nondegenerate}; whether it removes all of $\ker D\Phi$, which \Cref{thm:monarch-schur} shows can be larger than one dimension, is not addressed here), while by \Cref{thm:mu-limit} converging, as the penalty vanishes, to a solution of the original unregularized problem whenever the latter is well posed (attained by a finite factorization), and to a well-defined best-approximation image otherwise.

We have shown that the Morse theory of the Monarch least-squares approximation problem, and the combinatorics of the permutation $P$ underlying it, are governed by a single invariant of the pair $(P,\lambda)$: the relative-position algebra $B\cap PBP^\top\cong\bigoplus_{i,j}\mathrm{Mat}_{a_{ij}}(\R)$, identified with the classical double-coset invariant of a pair of parabolic subgroups. This one algebra simultaneously computes the kernel of the differential $D\Phi$ (\Cref{lem:kernel-end}), hence the Hessian $C(A)=S+T(A)$ and (measure zero always, via Sard's theorem; explicitly, as a union of hyperplanes within the normal slice, at special cyclically symmetric points, when $P$ carries extra cyclic symmetry) the focal set of the Monarch parametrization, and separately determines the codimension of the Bruhat-type stratification of $P$ within the space of all permutations compatible with $\lambda$ -- though, as \Cref{thm:codim} makes precise, $\dim\ker D\Phi=\sum_{i,j}a_{ij}^2$ and $\codim(\mathrm{cell}_a)=\sum_{i<i',j<j'}a_{ij}a_{i'j'}$ are different functions of the same transfer matrix $a$, not the same quantity viewed twice. The classical exact coincidence between Morse index and Bruhat-cell codimension in Helmke--Shayman's theorem \cite{HelmkeShayman1995} fails in the Monarch setting because the relative-position cell itself is typically positive-dimensional once $\lambda$ is compared against itself rather than against a complete flag (\Cref{sec:HS-comparison}), so that a spectral computation at a specific target $A$ (still built from the same algebra $B\cap PBP^\top$) is needed on top of the combinatorial type $a$. We used the resulting exact formula $\dim\ker D\Phi=\sum_{i,j}a_{ij}^2$ to solve the Monarch--Schur minimization problem exactly in the classical equal-block case of \cite{DaoMonarch2022} (\Cref{thm:monarch-schur}: minimum $m^2$, attained by transfer matrices $a_{ij}\equiv1$, by permutations that need not be single cycles), giving a two-tier combinatorial hierarchy -- transportation-polytope membership and minimality of $\sum_{i,j}a_{ij}^2$ -- together with a second-order (Morse-theoretic) description built on the same algebra throughout.

Beyond this local and structural theory, we examined the global dynamics of the associated gradient flow. Building on the classical convergence template for real-analytic gradient systems developed by {\L}ojasiewicz \cite{Lojasiewicz1963,Lojasiewicz1983} and applied to principal-component-type flows by Yoshizawa, Helmke and Starkov \cite{YoshizawaHelmkeStarkov2001} and by Yoshizawa \cite{Yoshizawa2023}, we showed that the missing coercivity ingredient of that template genuinely fails for the Monarch gradient flow (\Cref{sec:global-obstruction}): the bilinearity of $\Phi(L,R)=PLPR$ admits explicit unbounded flat directions (\Cref{prop:flat,prop:flat-general}), possibly (though not conclusively) related to a border-rank-type non-closedness of the Monarch image (\Cref{sec:border}), for every $P$ whose transfer matrix has a zero entry (a qualification of the earlier, overly general genericity claim; see the remark following \Cref{prop:flat-general}). We then showed that a quartic Tikhonov penalty restores coercivity uniformly and unconditionally (\Cref{prop:coercive}), yields global convergence of the regularized gradient flow to a single critical point via the same {\L}ojasiewicz mechanism (\Cref{thm:tikhonov-converge}), and converges as the regularization vanishes to a quartic-norm-minimizing solution of the original problem when one exists, or otherwise to a well-defined best-approximation image (\Cref{thm:mu-limit}). We further showed that this regularization leaves the Monarch computational architecture untouched while canonically resolving the residual scalar gauge freedom of \Cref{thm:monarch-schur} (\Cref{prop:balance-min}), discouraging large single-block norms (without a proven effect on individual block conditioning), and strictly removing the flat direction of the unregularized Hessian associated with this scalar gauge freedom (\Cref{prop:gauge-nondegenerate}) -- giving a precise geometric account of the empirical practice of weight decay on structured factorized layers (\Cref{sec:network-implications}).

Several questions remain open:
\begin{enumerate}[label=(\arabic*)]
\item A closed-form solution of the integer transportation problem $\min\{\sum_{i,j}a_{ij}^2 : a\in\Trans(\lambda,\lambda)\}$ for \emph{general} (non-equal-block) partitions $\lambda$, extending the exact equal-block solution of \Cref{thm:monarch-schur}; \Cref{ex:schur-numerics} shows the naive Cauchy--Schwarz bound of \Cref{prop:cs-bound} is not tight in general, and we do not know a closed form for the exact minimum or a full description of the minimizing transfer matrices when $\lambda$ is not equi-partition.
\item A classification of which Monarch permutations $P$ carry enough cyclic symmetry for the local hyperplane description of the focal set (\Cref{ex:cyclic-focal}) to assemble into a single global description of $\Focal(\mM_P)$; relatedly, whether the companion algebra $B\cap C(P)=\End_{Q(P)}(V)$ of \Cref{rem:end-still-fine} has any geometric role in the Monarch approximation problem.
\item A representation-theoretic (Auslander--Reiten \cite{AuslanderReitenSmalo1995}) interpretation of walls between strata of the focal arrangement of \Cref{thm:focal} as wall-crossing/degeneration phenomena for the perturbed quiver representation.
\item A precise description of how the Weingarten operator $W(A)$ of \Cref{sec:focal} varies as $A$ moves transversally to a fixed cell $\mathrm{cell}_a$, relating the local hyperplane description of \Cref{ex:cyclic-focal} to the facet structure of the transportation polytope of \Cref{sec:birkhoff}.
\item Global convergence analysis of the gradient flow $\dot L=-\nabla_Lf_A$, $\dot R=-\nabla_Rf_A$ on the noncompact manifold $\mM_P$: \Cref{sec:global-obstruction} shows this fails in general (\Cref{prop:flat,prop:flat-general}); whether the obstruction traces to a genuine border-rank-type non-closedness of the Monarch image, as opposed to coercivity failure alone, is left open (\Cref{sec:border}); \Cref{sec:tikhonov} then establishes unconditional global convergence for a Tikhonov-regularized flow (\Cref{thm:tikhonov-converge}) together with a precise vanishing-regularization limit theorem (\Cref{thm:mu-limit}). A sharp characterization of the minimal regularization degree guaranteeing coercivity, and a full account of behavior at infinity for the \emph{unregularized} flow beyond the obstruction results of \Cref{sec:global-obstruction}, remain open.
\end{enumerate}

\bigskip
\noindent\textbf{Acknowledgments.} The author had no prior acquaintance with Monarch matrices. Interest in the topic arose from a discussion with Dr.\ Naoki Hayashi at Toyota Central R\&D Labs., Inc.\ on July 10, 2026.

\end{document}